\theoremstyle{plain}
\newtheorem{thm}{Theorem}[section]
\newtheorem{lem}{Lemma}[section]
\newtheorem{cor}{Corollary}[section]
\theoremstyle{remark}
\newtheorem{rem}{Remark}[section]
\numberwithin{equation}{section}
\begin{document}

\title[New gradient estimates]
{\textbf{S\MakeLowercase{ome new  gradient estimates for two nonlinear parabolic equations under} R\MakeLowercase{icci flow}}}

\author[W. Wang]{Wen Wang\qquad Hui Zhou}
\address[W. Wang\quad H. Zhou]{School of Mathematics and Statistics, Hefei Normal
University, Hefei 230601,P.R.China;School of mathematical Science, University of Science and
Technology of China, Hefei 230026, China}
\email{\href{mailto: W. Wang <wwen2014@mail.ustc.edu.cn>}{wwen2014@mail.ustc.edu.cn}}

\today
\begin{abstract}
In this paper, by  maximum principle and cutoff function, we investigate gradient estimates for positive
 solutions to two nonlinear parabolic equations under Ricci flow. The related Harnack inequalities
are deduced. An result about positive solutions on closed manifolds under Ricci flow is abtained.
As applications, gradient estimates and Harnack inequalities for positive solutions to the heat
equation under Ricci flow are derived. These results in the paper can be regard as generalizing the gradient
estimates of Li-Yau, J. Y. Li, Hamilton and Li-Xu to the Ricci flow. Our results also improve the estimates
of S. P. Liu and J. Sun to the nonlinear parabolic equation under Ricci flow.
\end{abstract}

\keywords{Gradient estimate, nonlinear parabolic equation, heat equation, Ricci flow,  Harnack inequality}

\subjclass[2010]{ 58J35, 35K05,  53C21}

\thanks{Corresponding author: Wen Wang, E-mail: wwen2014@mail.ustc.edu.cn}

\thanks{This paper was typeset using \AmS-\LaTeX}

\maketitle

\section{\textbf{Introduction}}
Beginning with the pioneering work of Li and Yau \cite{14}, gradient estimates are also known as differential
Harnack inequalities, which have tremendous impact in geometric analysis,
as shown for example in \cite{14,15,16}. Moreover, both have very important applications  in singularity analysis.
In perelman's geometrization conjecture \cite{22,23}   on the poincar\'{e} conjecture, a differential Harnack inequality played an
important role.

Next, we simply introduce research progress  associated with this article.

Let $(M^{n}, g)$ be a complete Riemannian manifold. Li and Yau \cite{14}  established a famous
gradient estimate for positive solutions to the following heat equation
 \begin{equation}\label{1.1}
u_{t}=\Delta u
\end{equation}
on $(M^{n}, g)$, which is described as

\textbf{Theorem A}\quad (Li-Yau \cite{14}) \emph{Let $(M^{n}, g)$ be a complete
Riemannian manifold. Suppose that on the ball $B_{2R}$,
$Ricci(B_{2R})\geq -K$. Then for any $\alpha >1$,
\begin{equation}\label{1.2}
\sup_{B_{2R}}\left(\frac{|\nabla u|^2}{u^2}-\alpha
\frac{u_t}{u}\right)\leq
\frac{C\alpha^2}{R^2}\left(\frac{\alpha^2}{\alpha^{2}-1}+\sqrt{K}R\right)
+\frac{n\alpha^{2}K}{\alpha-1}+\frac{n\alpha^{2}}{2t}.
\end{equation}
In general, on a complete Riemannian manifold, if $Ricci(M)\geq -k$,
by letting $R\rightarrow\infty$ in (1.2), one inferred
\begin{equation}\label{1.3}
\frac{|\nabla u|^2}{u^2}-\alpha
\frac{u_t}{u}\leq\frac{n\alpha^{2}k}{2(\alpha-1)}+
\frac{n\alpha^{2}}{2t}.
\end{equation}}

In $1991$, Li \cite{15} generalized Li and Yau's estimates to the nonlinear parabolic equation
\begin{equation}\label{1.4}
\left(\Delta -\frac{\partial}{\partial t}\right)u(x,t) +
h(x,t)u^{\alpha}(x,t)=0
\end{equation}
on $(M^{n}, g)$.
In $1993$, Hamilton in \cite{8}  generalized the constant $\alpha$ of Li and Yau's result to the function $\alpha(t)=e^{2Kt}$.
In $2006$, Sun \cite{27} also obtained a  gradient estimate of different coefficient.
In $2011$, Li and Xu in \cite{17} further promoted Li and Yau's result, and found  two new functions $\alpha(t)$.
Recently, first author and Zhang in \cite{28} further generalized Li and Xu's results to the nonlinear parabolic equation ~\eqref{1.4}.
Related results can be found in \cite{5,11,32}.

In this paper, we investigate the two nonlinear parabolic equations
\begin{equation}\label{1.5}
\partial_{t}u(x,t)=\Delta u(x,t)+h(x,t) u^{l}(x,t)
\end{equation}
and
\begin{equation}\label{1.6}
\partial_{t}u(x,t)=\Delta u(x,t)+au(x,t)\log u(x,t)
\end{equation}
under Ricci flow, where the function $h(x,t)\geq 0$ is defined on
$M^{n}\times[0,T]$, which is $C^2$ in the first variable and $C^1$
in the second variable, $T$ is a positive constant and $l, a\in \mathbb{R}$, respectively.

Recently, there are a number of studies on Ricci flow on manifolds by R. Hamilton \cite{9,10} and others, because
the Ricci flow is a powerful tool in analyzing the structure of manifolds. Assume  $M^n$ is an $n$-dimensional manifold without boundary,
and let $(M^{n}, g(t))_{t\in[0,T]}$  be an $n$-dimensional complete manifold with
metric $g(t)$ evolving by the Ricci flow
\begin{equation}\label{1.7}
\frac{\partial g(t)}{\partial t}=-2R_{ij},\quad (x,t)\in M^{n}\times[0,T].
\end{equation}

In 2008, Kuang and Zhang \cite{11} proved a gradient estimate for positive solutions to the conjugate heat equation under
Ricci flow on a closed manifold. In 2009,  Liu \cite{18} derived a gradient estimate for positive solutions to the heat equation
under Ricci flow. Afterwards, Sun\cite{26} generalized Liu's results to general geometric flow.
 In 2010, Bailesteanu, Cao and Pulemotov \cite{1} established some gradient estimates for positive solutions
to the heat equation under Ricci flow. In 2016, Li and Zhu \cite{19} generalized J. Y. Li's  \cite{15} estimates
under Ricci flow.
Recently, Cao and Zhu \cite{3} derived some Aronson and B\'{e}nilan estimates for porous medium equation
\begin{equation*}
u_{t}=\Delta u^{m},\quad m>1
\end{equation*}
 under Ricci flow.
 Li, Bai and Zhang \cite{13} studied
fast diffusion equation
\begin{equation*}
u_{t}=\Delta u^{m},\quad 0<m<1
\end{equation*}under the
Ricci flow. Zhao and Fang \cite{31} generalized Yang's result \cite{30} to the Ricci flow.

Firstly, we introduce three $C^1$ functions $\alpha(t)$, $\varphi(t)$ and $\gamma(t)$ $: (0,+\infty)\rightarrow (0,+\infty)$.
Suppose that three $C^1$ functions $\alpha(t)$, $\varphi(t)$ and $\gamma(t)$
  satisfy the following conditions:\\
  $(C1)$~ $\alpha(t)>1$, $\varphi(t)$ and $\gamma(t)$.\\
  $(C2)$~ $\alpha(t)$ and $\varphi(t)$ satisfy the following system
 \begin{equation*}
\left\{\aligned
\frac{2\varphi}{n}-2\alpha K\geq (\frac{2\varphi}{n}-\alpha')\frac{1}{\alpha},\\
\frac{2\varphi}{n}-\alpha'>0,\\
\frac{\varphi^2}{n}+\alpha\varphi'\geq 0.
\endaligned\right.
\end{equation*}
$(C3)$ ~$\gamma(t)$ satisfies
 \begin{equation*}
\frac{\gamma'}{\gamma}-(\frac{2\varphi}{n}-\alpha')\frac{1}{\alpha}\leq 0.
\end{equation*}
 $(C4)$~ $\gamma(t)$ is non-decreasing, and $\alpha(t)$ is also non-decreasing or is bounded uniformly.

 This paper is organized as follows: We prove  gradient estimates for the equation ~\eqref{1.5} in
Section $2$ and  gradient estimates for the equation ~\eqref{1.6} in Section $3$. We derive related Harnack inequalities
in Section $4$. As special case, we deduce gradient estimates and Harnack inequality to the heat equation in section $5$.
Detailed calculation of some specific functions $\alpha(t)$, $\varphi(t)$  and $\gamma(t)$ are given in section $6$.

\section{\textbf{Gradient estimates for the equation ~\eqref{1.5}}}
In this section, we will derive some new gradient estimates  for positive solutions
to  equation ~\eqref{1.5} under the Ricci flow.

\subsection{\textbf{Main results}}
\

We state our results as follows.

\begin{thm}
Let $(M^{n}, g(t))_{t\in [0,T]}$ be a complete solution to the Ricci flow ~\eqref{1.7}.
  Assume that $|\mathrm{Ric}(x,t)|\leq K$ for some $K>0$ and all $t\in [0,T]$.
  Suppose that there exist three functions $\alpha(t)$, $\varphi(t)$ and $\gamma(t)$
  satisfy  conditions (C1), (C2), (C3) and (C4).

Given $x_{0}\in M^n$ and $R>0$, let $u$ be a positive solution of the  equation
~\eqref{1.5}
in the cube $B_{2R,T}:=\{(x,t)|d(x,x_{0},t)\leq 2R, 0\leq t\leq T\}$. Let $h(x,t)$ be a function defined
on $M^{n}\times[0,T]$ which is $C^1$ in $t$ and $C^2$ in $x$, satisfying $|\nabla h|^{2}\leq \delta_{2}h$ and
$\Delta h\geq -\delta_{3}$ on $B_{2R,T}$ for some positive constants $\delta_{2}$ and $\delta_{3}$.

$(1)$ $l\leq 1$.\quad  If $\frac{\gamma\alpha^4}{\alpha-1}\leq C_{1}$ for some constant $C_1$,
then
\begin{align*}
\nonumber
&\frac{|\nabla u|^2}{u^2}-\alpha\frac{u_{t}}{u}+\alpha h(x,t)u^{l-1}\\
&\leq
C\alpha^{2}\left(\frac{1}{R^2}+\frac{\sqrt{K}}{R}+K\right)+\frac{Cn^{2}\alpha^4}{R^{2}\gamma}+n^{\frac{3}{2}}\alpha^{2}K\\
&+\alpha\sqrt{n\overline{u}_{1}\delta_3}
+n\alpha^{2}\overline{u}_{1}\delta_{1}+\sqrt{\frac{2-l}{2}}
\alpha^{\frac{3}{2}} \sqrt{n\overline{u}_{1}\delta_{2}}+\alpha\varphi.
\end{align*}

If $\frac{\gamma}{\alpha-1}\leq C_{2}$ for some constant $C_2$,
then
\begin{align*}
\nonumber
&\frac{|\nabla u|^2}{u^2}-\alpha\frac{u_{t}}{u}+\alpha h(x,t)u^{l-1}\\
&\leq
C\alpha^{2}\left(\frac{1}{R^2}+\frac{\sqrt{K}}{R}+K\right)+\frac{Cn^{2}\alpha^{4}}{R^{2}\gamma}+n^{\frac{3}{2}}\alpha^{2}K\\
&+\alpha\sqrt{n\overline{u}_{1}\delta_3}
+n\alpha^{2}\overline{u}_{1}\delta_{1}+\sqrt{\frac{2-l}{2}}
\alpha^{\frac{3}{2}} \sqrt{n\overline{u}_{1}\delta_{2}}+\alpha\varphi,
\end{align*}
where $C$ is a positive constant depending only on $n$ and set
$$\overline{u}_{1}:=\max_{B_{2R,T}}u^{l-1},\quad \delta_{1}:=\max_{B_{2R,T}}h(x,t).$$

$(2)$ $l> 1$.\quad  If $\frac{\gamma\alpha^4}{\alpha-1}\leq C_{1}$ for some constant $C_1$,
then
\begin{align*}
\nonumber
&\frac{|\nabla u|^2}{u^2}-\alpha\frac{u_{t}}{u}+\alpha h(x,t)u^{l-1}\\
&\leq
C\alpha^{2}\left(\frac{1}{R^2}+\frac{\sqrt{K}}{R}+K\right)+\frac{Cn^{2}\alpha^4}{R^{2}\gamma}+n^{\frac{3}{2}}\alpha^{2}K
+n\alpha^{2}(l-1)\delta_{1}\overline{u}_{2}\\
&+\alpha\sqrt{\frac{n(l\alpha-1)\overline{u}_{2}\delta_{2}}{l-1}}
+\alpha^{\frac{3}{2}}\sqrt{n(l-1)\delta_{1}\varphi}+\alpha^{\frac{3}{2}}\sqrt{n\delta_{3}\overline{u}_{2}}+\alpha\varphi.
\end{align*}

If $\frac{\gamma}{\alpha-1}\leq C_{2}$ for some constant $C_2$,
then
\begin{align*}
\nonumber
&\frac{|\nabla u|^2}{u^2}-\alpha\frac{u_{t}}{u}+\alpha h(x,t)u^{l-1}\\
&\leq
C\alpha^{2}\left(\frac{1}{R^2}+\frac{\sqrt{K}}{R}+K\right)+\frac{Cn^{2}\alpha^{4}}{R^{2}\gamma}
+n^{\frac{3}{2}}\alpha^{2}K+n\alpha^{2}(l-1)\delta_{1}\overline{u}_{2}\\
&
+\alpha\sqrt{\frac{n(l\alpha-1)\overline{u}_{2}\delta_{2}}{l-1}}
+\alpha^{\frac{3}{2}}\sqrt{n(l-1)\delta_{1}\varphi}+\alpha^{\frac{3}{2}}\sqrt{n\delta_{3}\overline{u}_{2}}+\alpha\varphi,
\end{align*}
where $C$ is a positive constant depending only on $n$ and set
$$\overline{u}_{2}:=\max_{B_{2R,T}}u^{l-1},\quad \delta_{1}:=\max_{B_{2R,T}}h(x,t).$$
\end{thm}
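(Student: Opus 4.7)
The plan is to run a Li--Yau style maximum principle argument, adapted to the Ricci flow setting and to the nonlinearity $hu^l$. First I would pass to the logarithmic variable $f = \log u$, for which the equation \eqref{1.5} becomes $f_t = \Delta f + |\nabla f|^2 + h u^{l-1}$. I would then introduce the auxiliary quantity
\[
F := |\nabla f|^2 - \alpha(t) f_t + \alpha(t) h u^{l-1},
\]
which is precisely the expression appearing on the left-hand side of the claimed bound. The goal is to control $F$ from above by applying the parabolic maximum principle to $\gamma(t)\eta(x,t) F$, where $\eta$ is a standard Li--Yau spatial cutoff supported in $B_{2R}$ and $\gamma$ is the function built into hypothesis $(C3)$.

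Next I would compute $(\Delta - \partial_t) F$. Under Ricci flow the usual Bochner identity picks up the metric evolution term $2R_{ij}\nabla_i f\nabla_j f$, which combines favorably with the $\mathrm{Ric}(\nabla f,\nabla f)$ term from Bochner and is controlled by $|{\rm Ric}|\le K$. Writing
\[
\Delta f = \tfrac{1}{\alpha}\bigl(|\nabla f|^2 + \alpha h u^{l-1} - F\bigr) - |\nabla f|^2 - h u^{l-1},
\]
and using $|\nabla^2 f|^2 \ge (\Delta f)^2/n$, I would recast $(\Delta - \partial_t)F$ as a quadratic expression in $F$, $|\nabla f|^2$ and $hu^{l-1}$, with coefficients depending on $\alpha$, $\alpha'$, $\varphi$. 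The three algebraic conditions in $(C2)$ are exactly what is needed so that, after completing the square in $|\nabla f|^2$ and introducing the correction $\alpha\varphi$, one obtains a differential inequality of the shape
\[
(\Delta - \partial_t)F \ge \tfrac{2}{n\alpha}F^2 - 2\nabla f\cdot\nabla F - (\text{error terms}),
\]
where the error terms gather the contributions of $h_t$, $\nabla h$, $\Delta h$ (bounded by $\delta_1,\delta_2,\delta_3$) and of $u^{l-1}$ (bounded by $\overline{u}_1$ or $\overline{u}_2$). Handling the nonlinear piece $hu^{l-1}$ in this step is where the split $l\le 1$ versus $l>1$ becomes necessary: the sign of $(1-l)hu^{l-1}|\nabla f|^2$ determines whether it supports the good quadratic term or must be absorbed using Cauchy--Schwarz.

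I would then multiply by $\gamma\eta$, apply the maximum principle at a space--time maximum point $(x_0,t_0)$ of $\gamma\eta F$ in $B_{2R,T}$, use $\nabla(\gamma\eta F)=0$ to replace $\nabla F$ by $-F\nabla\log(\gamma\eta)$, and exploit $(\Delta - \partial_t)(\gamma\eta F)\ge 0$ at this point. Condition $(C3)$ takes care of the $\gamma'/\gamma$ term produced by the time derivative, while condition $(C4)$ (monotonicity or uniform bound of $\gamma$ and $\alpha$) is what lets us drop initial data and carry the argument up to any $t\in(0,T]$. The cutoff contributions $|\nabla\eta|^2/\eta$, $\Delta\eta$ and $\eta_t$ are handled by the usual Li--Yau cutoff with the Laplacian comparison theorem and, under Ricci flow, the distance distortion estimate $|(\partial_t - \Delta)d|\le C(K,n)$ (Perelman/Hamilton), justified rigorously via Calabi's trick to bypass non-smoothness of $d(\cdot,x_0,t)$. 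This is where the terms $C\alpha^2(1/R^2 + \sqrt K/R + K)$ and $Cn^2\alpha^4/(R^2\gamma)$ in the statement arise.

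What is left is to solve the resulting quadratic inequality for $F(x_0,t_0)$. The two sub-cases $\gamma\alpha^4/(\alpha-1)\le C_1$ and $\gamma/(\alpha-1)\le C_2$ correspond to two possible ways of absorbing the $F^2$-term: in the first, one retains the full $\alpha^4$ dependence coming from $\nabla\eta$; in the second, a refined estimate of the gradient-of-cutoff term allows one to trade $\alpha^4$ for a smaller factor at the cost of the stronger assumption. Both routes lead, after applying the elementary inequality $aX^2 - bX - c\le 0 \Rightarrow X \le b/(2a) + \sqrt{c/a}$, to precisely the bounds claimed. The main technical obstacle I anticipate is the bookkeeping in the lower bound for $(\Delta-\partial_t)F$: there are many cross terms involving $|\nabla f|^2$, $h u^{l-1}$ and their products, and one has to apply Cauchy--Schwarz inequalities with weights carefully calibrated to $\alpha,\varphi,\gamma$ so that the square-root terms $\sqrt{n\overline{u}_i\delta_j}$ and $\sqrt{n(l-1)\delta_1\varphi}$ appear with the correct constants rather than being absorbed suboptimally.
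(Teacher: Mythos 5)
Your proposal is correct and follows essentially the same route as the paper: the substitution $f=\log u$, the Harnack quantity $F=|\nabla f|^2-\alpha f_t+\alpha h u^{l-1}-\alpha\varphi$, a Bochner computation of $(\Delta-\partial_t)F$ under Ricci flow in which conditions (C1)--(C4) absorb the $\alpha',\varphi',\gamma'$ terms, the split into $l\le 1$ and $l>1$ according to the sign of the $(l-1)hu^{l-1}$ contributions, and finally the cutoff--Calabi--maximum-principle argument with the distance distortion estimate and the quadratic inequality $aX^2-bX-c\le 0$. The only cosmetic differences are where the $\alpha\varphi$ correction is attached and the exact coefficient of the good $F^2$ term, neither of which changes the argument.
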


Let us list some examples to illustrate the Theorem $2.1$ holds for different circumstances and
see appendix in section $6$ for detailed calculation process.

\begin{cor}  Suppose that $(M^{n}, g(t))_{t\in[0, T]}$ satisfies the hypotheses of Theorem $2.1$.
 Then the following special estimates are valid.

1. Li-Yau type:
$$\alpha(t)=constant,\quad \varphi(t)=\frac{\alpha n}{t}+\frac{nK\alpha^2}{\alpha-1}, \gamma(t)=t^{\theta}
\quad  with \quad 0<\theta\leq 2.$$
If $l\leq 1$, then
\begin{align*}
\nonumber
&\frac{|\nabla u|^2}{u^2}-\alpha\frac{u_{t}}{u}+\alpha h(x,t) u^{l-1}\\
&\leq
C\alpha^{2}\left[\frac{1}{R^2}(1+\sqrt{K}R)+\frac{\alpha^{2}}{\alpha-1}\frac{1}{R^2}+K\right]+\alpha\varphi\\
&
+n^{\frac{3}{2}}\alpha^{2}K+\alpha\sqrt{n\overline{u}_{1}\delta_3}
+n\alpha^{2}\overline{u}_{1}\delta_{1}+\sqrt{\frac{2-l}{2}}
\alpha^{\frac{3}{2}} \sqrt{n\overline{u}_{1}\delta_{2}}.
\end{align*}
If $l>1$, then
\begin{align*}
&\frac{|\nabla u|^2}{u^2}-\alpha\frac{u_{t}}{u}+\alpha h(x,t) u^{l-1}\\
&\leq
C\alpha^{2}\left[\frac{1}{R^2}(1+\sqrt{K}R)+\frac{\alpha^{2}}{\alpha-1}\frac{1}{R^2}+K\right]+\alpha\varphi\\
&
+n^{\frac{3}{2}}\alpha^{2}K
+n\alpha^{2}(l-1)\delta_{1}\overline{u}_{2}+\alpha\sqrt{\frac{n(l\alpha-1)\overline{u}_{2}\delta_{2}}{l-1}}\\
&
+\alpha^{\frac{3}{2}}\sqrt{n(l-1)\delta_{1}\varphi}+\alpha^{\frac{3}{2}}\sqrt{n\delta_{3}\overline{u}_{2}}.
\end{align*}
2. Hamilton type:
$$\alpha(t)=e^{2Kt},\quad \varphi(t)=\frac{n}{t}e^{4Kt},\quad \gamma(t)=te^{2Kt}.$$
If $l\leq 1$, then
\begin{align*}
&\frac{|\nabla u|^2}{u^2}-\alpha\frac{u_{t}}{u}+\alpha h(x,t)u^{l-1}\\
&\leq
C\alpha^{2}\left[\frac{1}{R^2}(1+\sqrt{K}R)+K\right]+\frac{C\alpha^4}{R^{2}te^{2Kt}}+\alpha\varphi\\
&
+n^{\frac{3}{2}}\alpha^{2}K+\alpha\sqrt{n\overline{u}_{1}\delta_3}
+n\alpha^{2}\overline{u}_{1}\delta_{1}+\sqrt{\frac{2-l}{2}}
\alpha \sqrt{n\overline{u}_{1}\delta_{2}}.
\end{align*}
If $l>1$, then
\begin{align*}
&\frac{|\nabla u|^2}{u^2}-\alpha\frac{u_{t}}{u}+\alpha h(x,t)u^{l-1}\\
&\leq
C\alpha^{2}\left[\frac{1}{R^2}(1+\sqrt{K}R)+K\right]+\frac{C\alpha^4}{R^{2}te^{2Kt}}+\alpha\varphi\\
&
+n^{\frac{3}{2}}\alpha^{2}K
+n\alpha^{2}(l-1)\delta_{1}\overline{u}_{2}+\alpha\sqrt{\frac{n(l\alpha-1)\overline{u}_{2}\delta_{2}}{l-1}}\\
&
+\alpha^{\frac{3}{2}}\sqrt{n(l-1)\delta_{1}\varphi}+\alpha^{\frac{3}{2}}\sqrt{n\delta_{3}\overline{u}_{2}}..
\end{align*}
3. Li-Xu type:
\begin{align*}&\alpha(t)=1+\frac{\sinh(Kt)\cosh(Kt)-Kt}{\sinh^{2}(Kt)},\quad \varphi(t)=2nK[1+\coth(Kt)],\\
& \gamma(t)=\tanh(Kt).
\end{align*}
If $l\leq 1$, then
\begin{align*}
&\frac{|\nabla u|^2}{u^2}-\alpha\frac{u_{t}}{u}+\alpha h(x,t) u^{l-1}\\
&\leq
C\left[\frac{1}{R^2}(1+\sqrt{K}R)+K\right]+\frac{C}{R^{2}\tanh(Kt)}+\alpha\varphi\\
&
+n^{\frac{3}{2}}\alpha^{2}K+\alpha\sqrt{n\overline{u}_{1}\delta_3}
+n\alpha^{2}\overline{u}_{1}\delta_{1}+\sqrt{\frac{2-l}{2}}
\alpha\sqrt{n\overline{u}_{1}\delta_{2}}.
\end{align*}
If $l>1$, then
\begin{align*}
&\frac{|\nabla u|^2}{u^2}-\alpha\frac{u_{t}}{u}+\alpha h(x,t)u^{l-1}\\
\leq&
C\alpha^{2}\left[\frac{1}{R^2}(1+\sqrt{K}R)+K\right]+\frac{C}{R^{2}\tanh(Kt)}+\alpha\varphi\\
&
+n^{\frac{3}{2}}\alpha^{2}K
+n\alpha^{2}(l-1)\delta_{1}\overline{u}_{2}+\alpha\sqrt{\frac{n(l\alpha-1)\overline{u}_{2}\delta_{2}}{l-1}}\\
&
+\alpha^{\frac{3}{2}}\sqrt{n(l-1)\delta_{1}\varphi}+\alpha^{\frac{3}{2}}\sqrt{n\delta_{3}\overline{u}_{2}},
\end{align*}
where $\alpha(t)$ is bounded uniformly.

4. Linear Li-Xu type:
$$\alpha(t)=1+2Kt, \varphi(t)=\frac{n}{t}+nK(1+2Kt+\mu Kt),\gamma(t)=Kt \quad with \quad \mu\geq \frac{1}{4}.$$
If $l\leq 1$, then
\begin{align*}
&\frac{|\nabla u|^2}{u^2}-\alpha\frac{u_{t}}{u}+\alpha h(x,t) u^{l-1}\\
&\leq
C\alpha^{2}\left[\frac{1}{R^2}(1+\sqrt{K}R)+K\right]+\frac{C\alpha^4}{R^{2}Kt}+\alpha\varphi\\
&
+n^{\frac{3}{2}}\alpha^{2}K+\alpha\sqrt{n\overline{u}_{1}\delta_3}
+n\alpha^{2}\overline{u}_{1}\delta_{1}+\sqrt{\frac{2-l}{2}}
\alpha\sqrt{n\overline{u}_{1}\delta_{2}}.
\end{align*}
If $l>1$, then
\begin{align*}
&\frac{|\nabla u|^2}{u^2}-\alpha\frac{u_{t}}{u}+\alpha h(x,t)u^{l-1}\\
&\leq
C\alpha^{2}\left[\frac{1}{R^2}(1+\sqrt{K}R)+K\right]+\frac{C\alpha^4}{R^{2}Kt}+\alpha\varphi\\
&
+n^{\frac{3}{2}}\alpha^{2}K
+n\alpha^{2}(l-1)\delta_{1}\overline{u}_{2}+\alpha\sqrt{\frac{n(l\alpha-1)\overline{u}_{2}\delta_{2}}{l-1}}\\
&
+\alpha^{\frac{3}{2}}\sqrt{n(l-1)\delta_{1}\varphi}+\alpha^{\frac{3}{2}}\sqrt{n\delta_{3}\overline{u}_{2}}.
\end{align*}
\end{cor}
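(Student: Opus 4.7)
The plan is to treat Corollary 2.2 as four parallel applications of Theorem 2.1. For each of the four prescriptions of the triple $(\alpha(t),\varphi(t),\gamma(t))$, the only real content is the verification that the triple satisfies hypotheses (C1)--(C4) together with one of the two auxiliary bounds $\gamma\alpha^4/(\alpha-1)\le C_1$ or $\gamma/(\alpha-1)\le C_2$; once these are in place, the stated inequalities for $l\le 1$ and $l>1$ are literal substitutions into the two conclusions of Theorem 2.1, with the quantities $n^{3/2}\alpha^2 K$, $\alpha\sqrt{n\overline{u}_1\delta_3}$, $\alpha\varphi$, etc., carried across verbatim.

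For each case I would proceed in the same order. First I would check (C1) and (C4): for instance, $\alpha\equiv\text{const}>1$ is trivially non-decreasing; $\alpha(t)=e^{2Kt}$ and $\alpha(t)=1+2Kt$ are monotone increasing with $\alpha>1$; for the hyperbolic choice $\alpha(t)=1+(\sinh(Kt)\cosh(Kt)-Kt)/\sinh^2(Kt)$ one verifies positivity of the numerator and boundedness (as $t\to\infty$, $\alpha\to 2$; as $t\to 0^+$, a Taylor expansion gives a finite limit). Positivity and monotonicity of $\gamma$ are immediate in each case. Next I would compute $\alpha'$, $\varphi'$, $\gamma'$ explicitly and substitute into the three algebraic inequalities of (C2) and the single inequality of (C3). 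In the Li--Yau and linear Li--Xu rows this reduces to elementary algebra in $t$, $K$, $n$; in the Hamilton row it reduces to comparing exponentials in $Kt$; in the Li--Xu row it reduces to manipulating identities such as $\coth^2-1=\mathrm{csch}^2$ and $(\coth)'=-\mathrm{csch}^2$.

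Finally I would verify the boundedness of the appropriate ratio that selects which of the two conclusions of Theorem 2.1 to invoke. For the constant-$\alpha$ Li--Yau choice with $\gamma=t^\theta$, $\theta\le 2$ on $[0,T]$, the ratio $\gamma\alpha^4/(\alpha-1)$ is bounded, producing the $\frac{Cn^2\alpha^4}{R^2\gamma}$ term that, combined with $\gamma=t^\theta$ and the constant $\alpha^2/(\alpha-1)$ factor, can be rewritten as $C\alpha^4/[R^2(\alpha-1)]$ absorbed into $C\alpha^2[\alpha^2/(\alpha-1)]/R^2$; for the Hamilton, Li--Xu, and linear Li--Xu rows I would use the second branch with $\gamma/(\alpha-1)\le C_2$ (for Hamilton, $\gamma/(\alpha-1)=te^{2Kt}/(e^{2Kt}-1)$ is bounded near $t=0$ by $1/(2K)$ and at infinity by $t$ growing only polynomially while $\alpha-1$ grows exponentially; for linear Li--Xu, $\gamma/(\alpha-1)=Kt/(2Kt)=1/2$; for Li--Xu, $\tanh(Kt)/(\alpha-1)$ is elementary). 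With the ratio controlled, the $\frac{Cn^2\alpha^4}{R^2\gamma}$ term specializes as stated in each row.

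The main obstacle I anticipate is the Li--Xu row, whose verification of the three inequalities of (C2) is the most delicate because both $\alpha'$ and $\varphi$ involve $\sinh$, $\cosh$ and $\coth$ simultaneously, and the comparison of $2\varphi/n-\alpha'$ with $(2\varphi/n-2\alpha K)\alpha$ requires using the specific form of $\alpha-1$ to produce a cancellation. These routine but somewhat tedious identities are precisely what the authors defer to Section~6; the proof of the corollary itself is nothing more than the observation that the Theorem 2.1 bounds then hold with the claimed specialized right-hand sides.
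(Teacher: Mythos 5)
Your proposal is correct and follows essentially the same route as the paper: the authors prove this corollary by verifying, case by case in the Appendix (Section 6), that each triple $(\alpha,\varphi,\gamma)$ satisfies the system (C2)--(C3) together with the monotonicity requirements and one of the two ratio bounds $\gamma\alpha^{4}/(\alpha-1)\leq C$ or $\gamma/(\alpha-1)\leq C$, and then substituting into the two conclusions of Theorem 2.1. Your anticipated order of verification (including the observation that the hyperbolic Li--Xu case is the most delicate and that the Li--Yau case retains the $\alpha^{2}/(\alpha-1)$ cutoff term) matches the paper's computations.
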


\begin{rem}
The above results can be regard as generalizing the gradient
estimates of Li-Yau \cite{14}, J. Y. Li \cite{15}, Hamilton \cite{8} and Li-Xu \cite{17} to the Ricci flow. Our results also generalize the estimates
of S. P. Liu \cite{18} and J. Sun \cite{26} to the nonlinear parabolic equation under the Ricci flow.
\end{rem}

The local estimates in Theorem $2.1$ imply global estimates.
\begin{cor}
Let $(M^{n}, g(t))_{t\in [0,T]}$ be a complete solution to the Ricci flow ~\eqref{1.7}.
  Assume that $|\mathrm{Ric}(x,t)|\leq K$ for some $K>0$ and all $(x,t)\in M^{n}\times[0,T]$.
Let $u(x,t)$ be a positive solution to equation ~\eqref{1.5} on $M^{n}\times [0,T]$.
Let $h(x,t)$ be a function defined
on $M^{n}\times[0,T]$ which is $C^1$ in $t$ and $C^2$ in $x$, satisfying $|\nabla h|^{2}\leq \delta_{2}h$ and
$\Delta h\geq -\delta_{3}$ on $M^{n}\times[0,T]$ for some positive constants $\delta_{2}$ and $\delta_{3}$.

If $l\leq 1$ and for $(x,t)\in M^{n}\times (0,T]$, then
\begin{align*}
\nonumber
&\frac{|\nabla u|^2}{u^2}-\alpha\frac{u_{t}}{u}+\alpha h(x,t)u^{l-1}\\
&\leq \alpha\varphi+
C\alpha\left[\alpha K+\sqrt{\overline{u}_{1}\delta_{3}}+\alpha\overline{u}_{1}\delta_{1}
+\sqrt{\frac{2-l}{2}}\sqrt{\overline{u}_{1}\delta_{2}}\right],
\end{align*}
where where $C$ is a positive constant depending only on $n$ and set
$$\overline{u}_{1}:=\max_{M^{n}\times [0,T]}u^{l-1},\quad \delta_{1}:=\max_{M^{n}\times [0,T]}h(x,t).$$

If $l>1$ and for $(x,t)\in M^{n}\times (0,T]$, then
\begin{align*}
&\frac{|\nabla u|^2}{u^2}-\alpha\frac{u_{t}}{u}+\alpha h(x,t)u^{l-1}\leq \alpha\varphi\\
\leq&
C\alpha\Big[\alpha K+(l-1)\alpha\overline{u}_{2}\delta_{1}+\sqrt{\frac{(l\alpha-1)\overline{u}_{2}\delta_{2}}{l-1}}
+\alpha^{\frac{1}{2}}\sqrt{(l-1)\delta_{1}\varphi}+\alpha^{\frac{1}{2}}\sqrt{\overline{u}_{2}\delta_{3}}\Big],
\end{align*}
where where $C$ is a positive constant depending only on $n$ and set
$$\overline{u}_{1}:=\max_{M^{n}\times [0,T]}u^{l-1},\quad \delta_{1}:=\max_{M^{n}\times [0,T]}h(x,t).$$
\end{cor}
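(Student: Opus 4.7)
The plan is to derive the global estimates by applying Theorem 2.1 on the ball $B_{2R,T}$ and letting $R\to\infty$. Fix an arbitrary $(x,t)\in M^n\times(0,T]$. Since $u$ and $h$ are defined on all of $M^n\times[0,T]$ with the curvature bound $|\mathrm{Ric}|\le K$ and the bounds $|\nabla h|^2\le\delta_2 h$, $\Delta h\ge-\delta_3$ holding globally, for every $R$ large enough that $(x,t)$ lies in the inner region where the local estimate is valid, Theorem 2.1 applies pointwise at $(x,t)$.

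Next I would isolate the explicit $R$-dependence on the right-hand side of Theorem 2.1. It appears only in the groups of terms
\[
C\alpha^2\Big(\frac{1}{R^2}+\frac{\sqrt{K}}{R}\Big)\quad\text{and}\quad\frac{Cn^2\alpha^4}{R^2\gamma},
\]
each of which tends to $0$ as $R\to\infty$, irrespective of which of the two alternative hypotheses $\gamma\alpha^4/(\alpha-1)\le C_1$ or $\gamma/(\alpha-1)\le C_2$ is assumed; consequently, no new condition on $\alpha,\varphi,\gamma$ is needed to justify the limit. The other $R$-dependent quantities are $\overline{u}_i:=\max_{B_{2R,T}}u^{l-1}$ and $\delta_1:=\max_{B_{2R,T}}h$; these are monotone non-decreasing in $R$ and converge to the corresponding global maxima over $M^n\times[0,T]$, which are the very quantities appearing in the statement of the corollary (implicitly assumed finite).

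Passing to the limit $R\to\infty$ and regrouping the surviving terms yields the stated inequality. The only bookkeeping point is to check that each term of Theorem 2.1 fits inside the bracket $\alpha\varphi+C\alpha[\,\alpha K+\cdots\,]$: for instance $n^{3/2}\alpha^2K$ absorbs into $C\alpha\cdot\alpha K$ with $C\ge n^{3/2}$, and $\alpha^{3/2}\sqrt{n(l-1)\delta_1\varphi}$ is rewritten as $\alpha\cdot\alpha^{1/2}\sqrt{n(l-1)\delta_1\varphi}$ to match the prescribed shape; analogously for the $l\le 1$ case. This is purely algebraic rearrangement, absorbing all dimensional constants into a single $C=C(n)$. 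The main --- and essentially only --- delicate issue is ensuring that the conclusion of Theorem 2.1 really is a pointwise inequality at a fixed interior point rather than merely a supremum bound on $B_{2R,T}$; this is guaranteed by the cutoff-function and maximum-principle argument underlying its proof.
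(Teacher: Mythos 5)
Your proposal is correct and coincides with the paper's (essentially unwritten) argument: the paper justifies this corollary only by the remark that the local estimates of Theorem 2.1 imply global ones, i.e.\ precisely by applying Theorem 2.1 on $B_{2R,T}$, letting $R\to\infty$ so that the $C\alpha^{2}(1/R^{2}+\sqrt{K}/R)$ and $Cn^{2}\alpha^{4}/(R^{2}\gamma)$ terms vanish, and absorbing the dimensional constants into $C=C(n)$. The one caveat --- an imprecision of the paper rather than of your argument --- is that matching the $l\le 1$ term $\sqrt{(2-l)/2}\,\alpha^{3/2}\sqrt{n\overline{u}_{1}\delta_{2}}$ to the stated bracket $C\alpha\sqrt{(2-l)/2}\,\sqrt{\overline{u}_{1}\delta_{2}}$ requires absorbing a factor $\alpha^{1/2}$ into $C$, which is legitimate only when $\alpha$ is uniformly bounded (as permitted by condition (C4)); your explicit rewriting $\alpha^{3/2}=\alpha\cdot\alpha^{1/2}$ in the $l>1$ case handles the analogous term there correctly.
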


We can derive a gradient estimate for an any positive solution to the following nonlinear parabolic
equation under the Ricci flow on a closed manifold without any curvature conditions.
The method of the proof is inspired by Hamilton \cite{10}, Shi \cite{23} and Liu \cite{18}.

\begin{thm}
Let $(M^{n}, g(x,t))_{t\in [0,T]}$ be a  solution to the Ricci flow ~\eqref{1.7} on a closed manifold.
 If $u$ is a positive solution to equation
 $$\partial_{t}u=\Delta u+h(t)u^{l-1},$$
 where $h(t)$ is a $C^{1}$ function and $h(t)\leq 0$.
 Then for $l\geq 1$, we have
 \begin{equation}\label{2.1}
 |\nabla u(x,t)|^{2}\leq\frac{1}{2t}\left(\max_{x\in M^{n}}u^{2}(x,0)-u^{2}(x,t)\right)\quad for\quad
 (x,t)\in M^{n}\times[0,T].
 \end{equation}
 \end{thm}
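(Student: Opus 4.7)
The plan is to adapt the classical Hamilton--Shi--Liu style argument: introduce an auxiliary function $F$ mixing $t|\nabla u|^2$ with a quadratic in $u$, verify that it is a subsolution of the heat operator along the Ricci flow, and then conclude by the parabolic maximum principle on the closed manifold $M^n$.

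Concretely, I would set $M:=\max_{y\in M^n}u(y,0)$ and
\[
F(x,t):=2t\,|\nabla u(x,t)|^2+u^2(x,t)-M^2,
\]
so that the target inequality \eqref{2.1} is equivalent to $F\leq 0$ on $M^n\times[0,T]$. At $t=0$ one has $F(x,0)=u^2(x,0)-M^2\leq 0$ by the choice of $M$, which handles the initial data.

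The main computation is to show $(\partial_t-\Delta)F\leq 0$. Two ingredients are needed. First, under $\partial_t g_{ij}=-2R_{ij}$ the time derivative of $|\nabla u|^2=g^{ij}u_iu_j$ contributes $+2\,\mathrm{Ric}(\nabla u,\nabla u)$, while the Bochner formula supplies
\[
\Delta|\nabla u|^2=2|\nabla^2 u|^2+2\langle\nabla\Delta u,\nabla u\rangle+2\,\mathrm{Ric}(\nabla u,\nabla u),
\]
so the two Ricci terms cancel. Substituting $u_t=\Delta u+h(t)u^{l-1}$ and using $\nabla h\equiv 0$ yields
\[
(\partial_t-\Delta)|\nabla u|^2=-2|\nabla^2 u|^2+2h(l-1)u^{l-2}|\nabla u|^2.
\]
Second, a direct calculation gives $(\partial_t-\Delta)u^2=-2|\nabla u|^2+2hu^l$. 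Combining these with the factor $2t$ in front of $|\nabla u|^2$, the $\pm 2|\nabla u|^2$ terms cancel and one obtains
\[
(\partial_t-\Delta)F=-4t\,|\nabla^2 u|^2+4t(l-1)h\,u^{l-2}|\nabla u|^2+2h\,u^l.
\]

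Now the sign assumptions pay off: because $h(t)\leq 0$, $l\geq 1$, $u>0$ and $t\geq 0$, each of the three terms on the right is non-positive, hence $(\partial_t-\Delta)F\leq 0$. Since $M^n$ is closed, the parabolic maximum principle yields $\max_{x\in M^n}F(\cdot,t)\leq\max_{x\in M^n}F(\cdot,0)\leq 0$, which rearranges exactly to \eqref{2.1}. The only subtle step is verifying that the Ricci-flow contribution to $\partial_t|\nabla u|^2$ precisely cancels the Bochner Ricci term, which is what frees the estimate from any curvature hypothesis in the spirit of Hamilton's trick; the assumption $l\geq 1$ is used exactly once, to sign the cross term $(l-1)h\,u^{l-2}|\nabla u|^2$, and $h\leq 0$ is used to sign the remaining $2hu^l$ term.
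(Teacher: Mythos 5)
Your proof is correct and takes essentially the same route as the paper: the paper works with $\overline{F}=t|\nabla u|^{2}+Xu^{2}$ and chooses $X=\tfrac12$, so your $F=2t|\nabla u|^{2}+u^{2}-M^{2}$ is just $2\overline{F}-M^{2}$, and both arguments rest on the same cancellation of the Ricci term from $\partial_t|\nabla u|^2$ against the Bochner term, followed by the parabolic maximum principle on the closed manifold. If anything, your version signs the $h$-terms more carefully than the paper's (which has some typographical inconsistencies between $u^{l}$ and $u^{l-1}$), but the underlying argument is identical.
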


\subsection{\textbf{Auxilliary lemma}}
\

To prove main results, we need a lemma.

Let $f=\ln u$. Then
\begin{equation}\label{2.2}
f_{t}=\Delta f+|\nabla f|^{2}+hu^{l-1}.
\end{equation}
Let $F=|\nabla f|^{2}-\alpha f_{t}+\alpha hu^{l-1}-\alpha\varphi$, where $\alpha=\alpha(t)>1$ and $\varphi=\varphi(t)>0$.

\begin{lem} Suppose that $(M^{n}, g(t))_{t\in[0, T]}$ satisfies the hypotheses of Theorem $2.1$.
We also assume that $\alpha(t)>1$ and $\varphi(t)>0$ satisfy the following system
\begin{equation}\label{2.3}
\left\{\aligned
\frac{2\varphi}{n}-2\alpha K\geq (\frac{2\varphi}{n}-\alpha')\frac{1}{\alpha},\\
\frac{2\varphi}{n}-\alpha'>0,\\
\frac{\varphi^2}{n}+\alpha\varphi'\geq 0,
\endaligned\right.
\end{equation}
and $\alpha(t)$ is non-decreasing.
Then
\begin{eqnarray}\label{2.4}
\nonumber
(\Delta-\partial_{t}) F&\geq &|f_{ij}+\frac{\varphi}{n}g_{ij}|^{2}+(\frac{2\varphi}{n}-\alpha')\frac{1}{\alpha}F
-\alpha^{2}n^{2}K^{2}-2\nabla f\nabla F\\
\nonumber
&&+2c(\alpha-1)(l-1)u^{l-1}|\nabla f|^{2}+\alpha(l-1)^{2}hu^{l-1}|\nabla f|^{2}\\
&&+\alpha(l-1)hu^{l-1}\Delta f+\alpha u^{l-1}\Delta h+2(\alpha-1)u^{l-1}\nabla f\cdot\nabla h.
\end{eqnarray}
\end{lem}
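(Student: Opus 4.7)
The plan is to compute $(\Delta-\partial_t)F$ directly, term by term, using Bochner's identity, the Ricci-flow commutator $[\partial_t,\Delta]$, and the evolution equation \eqref{2.2} for $f:=\log u$, then to substitute $\Delta f$ in favour of $F$ via an algebraic rearrangement of the defining relation. Eliminating $f_t$ from $F$ using \eqref{2.2} gives the identity $F=-(\alpha-1)|\nabla f|^2-\alpha\Delta f-\alpha\varphi$, equivalently $\Delta f=-\tfrac{\alpha-1}{\alpha}|\nabla f|^2-\varphi-\tfrac{F}{\alpha}$, which I will invoke at the end to collect the $F$-coefficient.

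For $(\Delta-\partial_t)|\nabla f|^2$, Bochner gives $\Delta|\nabla f|^2=2|f_{ij}|^2+2\nabla f\cdot\nabla\Delta f+2\,\mathrm{Ric}(\nabla f,\nabla f)$, while the metric evolution $\partial_t g^{ij}=2R^{ij}$ gives $\partial_t|\nabla f|^2=2\,\mathrm{Ric}(\nabla f,\nabla f)+2\nabla f\cdot\nabla f_t$. The Ricci terms cancel, and \eqref{2.2} then yields $(\Delta-\partial_t)|\nabla f|^2=2|f_{ij}|^2-2\nabla f\cdot\nabla|\nabla f|^2-2\nabla f\cdot\nabla(hu^{l-1})$. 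For $(\Delta-\partial_t)(-\alpha f_t)$ I use the commutator $\partial_t\Delta f=\Delta f_t+2R^{ij}f_{ij}$ (valid for scalars by the contracted second Bianchi identity), and control the resulting cross term by AM--GM in the form $|2\alpha R^{ij}f_{ij}|\leq |f_{ij}|^2+\alpha^2n^2K^2$; this consumes one copy of $|f_{ij}|^2$ and accounts for the $-\alpha^2n^2K^2$ on the right of \eqref{2.4}. The pieces $(\Delta-\partial_t)(\alpha hu^{l-1})$ and $(\Delta-\partial_t)(-\alpha\varphi)$ I expand by the Leibniz rule, with $\nabla(hu^{l-1})=u^{l-1}\nabla h+(l-1)hu^{l-1}\nabla f$ and the $u$-equation used for $\partial_t(hu^{l-1})$; these produce exactly the $h$-dependent and $\varphi'$-terms appearing in \eqref{2.4}.

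The remaining copy of $|f_{ij}|^2$ I complete to a square, $|f_{ij}|^2=\bigl|f_{ij}+\tfrac{\varphi}{n}g_{ij}\bigr|^2-\tfrac{2\varphi}{n}\Delta f-\tfrac{\varphi^2}{n}$, and then substitute the formula for $\Delta f$ from the first paragraph; this produces an $F$-coefficient of $\tfrac{2\varphi}{n\alpha}$ together with a mass term in $|\nabla f|^2$. Combining with the $\alpha'f_t$ piece from $\partial_t(-\alpha f_t)$ (once again rewritten via the defining relation for $F$) collapses the coefficient of $F$ to $\bigl(\tfrac{2\varphi}{n}-\alpha'\bigr)\tfrac{1}{\alpha}$. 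The three inequalities in (C2) are tailored, respectively, to (i) absorb the residual $2\alpha K|\nabla f|^2$-term from the curvature bound $|\mathrm{Ric}|\leq K$ into the $F$-coefficient, (ii) ensure positivity of that coefficient, and (iii) guarantee non-negativity of the pure-$\varphi$ self-interaction $\tfrac{\varphi^2}{n}+\alpha\varphi'$. The transport term $-2\nabla f\cdot\nabla F$ is recovered by differentiating the identity $F=-(\alpha-1)|\nabla f|^2-\alpha\Delta f-\alpha\varphi$ and using it to rewrite $-2\nabla f\cdot\nabla|\nabla f|^2$ that appeared from Bochner.

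The main obstacle is the bookkeeping: each of the three inequalities in (C2) is matched to a specific sign cancellation, and showing that the lower bound closes cleanly in exactly the form \eqref{2.4} requires tracking coefficients through several substitutions and keeping the two copies of $|f_{ij}|^2$ earmarked for different purposes (one for curvature absorption, one for the Hessian square). Once the structure is in place, however, no tool beyond Bochner, AM--GM, and the elementary expression for $\Delta f$ in terms of $F$ is needed.
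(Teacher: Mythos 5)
Your proposal is correct and follows essentially the same route as the paper's proof: Bochner's formula plus the Ricci-flow commutator $\Delta(f_t)=(\Delta f)_t-2R_{ij}f_{ij}$, Young's inequality spending one copy of $|f_{ij}|^2$ to produce $-\alpha^2n^2K^2$, completion of the Hessian square with the other copy, substitution of $\Delta f=-\tfrac{F}{\alpha}-\tfrac{\alpha-1}{\alpha}|\nabla f|^2-\varphi$, and the three conditions in \eqref{2.3} used exactly as you assign them. The only difference is cosmetic bookkeeping in how the transport term $-2\nabla f\cdot\nabla F$ is assembled, which does not change the argument.
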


\begin{proof}[\textbf{Proof}]
By directly computing,  we have
\begin{eqnarray*}
\Delta F&=&\Delta|\nabla f|^{2}-\alpha\Delta(f_{t})+\alpha \Delta(hu^{l-1})\\
&=&2|f_{ij}|^{2}+2f_{j}f_{iij}+2R_{ij}f_{i}f_{j}-\alpha\Delta(f_t)+\alpha h\Delta(u^{l-1})\\
&&+\alpha u^{l-1}\Delta h+2\alpha\nabla h\nabla u^{l-1}\\
&=&2\Big(|f_{ij}|^{2}+\alpha R_{ij}f_{ij}\Big)+2f_{j}f_{iij}+2R_{ij}f_{i}f_{j}
-\alpha(\Delta f)_{t}\\
&&+\alpha h \Delta(u^{l-1})+\alpha u^{l-1}\Delta h+2\alpha\nabla h\nabla u^{l-1},
\end{eqnarray*}
where we have used Bochner's formula and
\begin{equation*}
\Delta(f_t)=(\Delta f)_{t}-2\sum_{i,j=1}^{n}R_{ij}f_{ij}.
\end{equation*}
Applying Young's inequality
$$R_{ij}f_{ij}\leq |R_{ij}||f_{ij}|\leq \frac{\alpha}{2}|R_{ij}|^{2}+\frac{1}{2\alpha}|f_{ij}|^{2},$$
we conclude for $|R_{ij}|\leq K$,
\begin{eqnarray}\label{2.5}
\nonumber
\Delta F
&\geq&|f_{ij}|^{2}-\sum\alpha^{2} |R_{ij}|^{2}+2f_{j}f_{iij}+2R_{ij}f_{i}f_{j}
-\alpha(\Delta f)_{t}\\
\nonumber
&&+\alpha h \Delta(u^{l-1})+\alpha u^{l-1}\Delta h++2\alpha\nabla h\nabla u^{l-1}\\
\nonumber
&\geq&|f_{ij}|^{2}-\alpha^{2}n^{2}K^{2}+2f_{j}f_{iij}+2R_{ij}f_{i}f_{j}
-\alpha(\Delta f)_{t}\\
&&+\alpha h \Delta(u^{l-1})+\alpha u^{l-1}\Delta h+2\alpha\nabla h\nabla u^{l-1}.
\end{eqnarray}
On the other hand, we infer
\begin{eqnarray}\label{2.6}
\nonumber
\partial_{t} F&=&(|\nabla f|^{2})_{t}-\alpha f_{tt}-\alpha' f_{t}+\alpha' h u^{l-1}+\alpha h(u^{l-1})_{t}\\
\nonumber
&&+\alpha u^{l-1}h_{t}-\alpha \varphi'-\alpha'\varphi\\
\nonumber
&=&2\nabla f\nabla(f_t)+2R_{ij} f_{i}f_{j}-\alpha f_{tt}-\alpha'f_{t}+\alpha' h u^{l-1}+\alpha u^{l-1}h_{t}\\
&&+\alpha h (u^{l-1})_{t}-\alpha \varphi'-\alpha'\varphi.
\end{eqnarray}
We follow from ~\eqref{2.5} and ~\eqref{2.6},
\begin{eqnarray*}
(\Delta-\partial_{t}) F&\geq &|f_{ij}|^{2}-\alpha^{2}n^{2}K^{2}+2\nabla f\nabla(\Delta f)
-\alpha(\Delta f)_{t}+\alpha h \Delta(u^{l-1})\\
&&+\alpha u^{l-1}\Delta h+2\alpha\nabla h\nabla u^{l-1}-2\nabla f\nabla(f_t)
+\alpha f_{tt}+\alpha'f_{t}\\
&&-\alpha' h u^{l-1}-\alpha h (u^{l-1})_{t}
-\alpha u^{l-1}h_{t}+\alpha \varphi'+\alpha'\varphi\\
&=&|f_{ij}|^{2}-\alpha^{2}n^{2}K^{2}+2\nabla f\nabla(\Delta f)
-\alpha(f_{t}-|\nabla f|^{2}-hu^{l-1})_{t}\\
&&+\alpha h \Delta(u^{l-1})+\alpha u^{l-1}\Delta h+2\alpha\nabla h\nabla u^{l-1}
-2\nabla f\nabla(f_t)+\alpha f_{tt}\\
&&+\alpha'f_{t}-\alpha' h u^{l-1}
-\alpha h (u^{l-1})_{t}-\alpha u^{l-1}h_{t}+\alpha \varphi'+\alpha'\varphi\\
&=&|f_{ij}|^{2}-\alpha^{2}n^{2}K^{2}+2\nabla f\nabla(\Delta f)
+\alpha(|\nabla f|^{2})_{t}+\alpha h \Delta(u^{l-1})\\
&&+\alpha u^{l-1}\Delta h+2\alpha\nabla h\nabla u^{l-1}-2\nabla f\nabla(f_t)+\alpha'f_{t}\\
&&-\alpha' h u^{l-1}+\alpha \varphi'+\alpha'\varphi.
\end{eqnarray*}
By using the formula
\begin{equation*}
(|\nabla f|^2)_{t}=2\nabla f\cdot\nabla (f_{t})+2\mathrm{Ric}(\nabla f, \nabla f),
\end{equation*}
we obtain
\begin{eqnarray}\label{2.7}
\nonumber
(\Delta-\partial_{t}) F&\geq &|f_{ij}|^{2}-\alpha^{2}n^{2}K^{2}+2\nabla f\nabla(\Delta f)
+2\alpha\nabla f\nabla(f_t)\\
\nonumber
&&+2\alpha R_{ij}f_{i}f_{j}
+\alpha h \Delta(u^{l-1})+\alpha u^{l-1}\Delta h+2\alpha\nabla h\nabla u^{l-1}\\
\nonumber
&&-2\nabla f\nabla(f_t)+\alpha'f_{t}-\alpha' h u^{l-1}+\alpha \varphi'+\alpha'\varphi\\
\nonumber
&=&|f_{ij}|^{2}+2\alpha R_{ij}f_{i}f_{j}-\alpha^{2}n^{2}K^{2}-2\nabla f\nabla F\\
\nonumber
&&+2(\alpha-1)\nabla f\nabla(h u^{l-1})+\alpha h \Delta(u^{l-1})+\alpha u^{l-1}\Delta h\\
&&+2\alpha\nabla h\nabla u^{l-1}+\alpha'f_{t}-\alpha' h u^{l-1}+\alpha \varphi'+\alpha'\varphi.
\end{eqnarray}
Applying the following two equations
\begin{eqnarray*}
\nabla(u^{l-1})&=&(l-1)u^{l-1}\nabla f,\\
\Delta(u^{l-1})&=&(l-1)^{2}u^{l-1}|\nabla f|^{2}+(l-1)u^{l-1}\Delta f,
\end{eqnarray*}
to ~\eqref{2.7}, we have
\begin{eqnarray}\label{2.8}
\nonumber
(\Delta-\partial_{t}) F&\geq &|f_{ij}|^{2}+2\alpha R_{ij}f_{i}f_{j}-\alpha^{2}n^{2}K^{2}+2\nabla f\nabla F
+\alpha u^{l-1}\Delta h
\\
\nonumber
&&+2h(\alpha-1)(l-1)u^{l-1}|\nabla f|^{2}+2\big[(\alpha-1)+\alpha(l-1)\big]u^{l-1}\nabla f\cdot\nabla h\\
\nonumber
&&+h\alpha(l-1)^{2}u^{l-1}|\nabla f|^{2}+h\alpha(l-1)u^{l-1}\Delta f\\
&&+\alpha'f_{t}-\alpha' c u^{l-1}+\alpha \varphi'+\alpha'\varphi.
\end{eqnarray}
Further applying  unit matrix $(\delta_{ij})_{n\times n}$ and ~\eqref{2.8}, we derive
\begin{eqnarray*}
(\Delta-\partial_{t}) F&\geq &|f_{ij}+\frac{\varphi}{n}\delta_{ij}|^{2}-2\alpha K|\nabla f|^{2}-\alpha^{2}n^{2}K^{2}+2\nabla f\nabla F
\\
&&+2h(\alpha-1)(l-1)u^{l-1}|\nabla f|^{2}+2\big[(\alpha-1)+\alpha(l-1)\big]u^{l-1}\nabla f\cdot\nabla h\\
&&+h\alpha(l-1)^{2}u^{l-1}|\nabla f|^{2}+h\alpha(l-1)u^{l-1}\Delta f+\alpha u^{l-1}\Delta h\\
&&+\alpha'f_{t}-\alpha' c u^{l-1}+\alpha \varphi'+\alpha'\varphi-\frac{\varphi^2}{n}-2\frac{\varphi}{n}\Delta f.
\end{eqnarray*}
Applying ~\eqref{2.2}, we have
\begin{eqnarray}\label{2.9}
\nonumber
(\Delta-\partial_{t}) F&\geq &|f_{ij}+\frac{\varphi}{n}\delta_{ij}|^{2}+(\frac{2\varphi}{n}-2\alpha K)|\nabla f|^{2}-(\frac{2\varphi}{n}-\alpha')f_{t}\\
\nonumber
&&+(\frac{2\varphi}{n}-\alpha')cu^{l-1}-(\frac{2\varphi}{n}-\alpha')\frac{\alpha\varphi}{\alpha}-\alpha^{2}n^{2}K^{2}-2\nabla f\nabla F\\
\nonumber
&&+2h(\alpha-1)(l-1)u^{l-1}|\nabla f|^{2}+2\big[(\alpha-1)+\alpha(l-1)\big]u^{l-1}\nabla f\cdot\nabla h\\
\nonumber
&&+h\alpha(l-1)^{2}u^{l-1}|\nabla f|^{2}+h\alpha(l-1)u^{l-1}\Delta f+\alpha u^{l-1}\Delta h\\
&&+\alpha \varphi'+\alpha'\varphi
-\frac{\varphi^2}{n}+(\frac{2\varphi}{n}-\alpha')\frac{\alpha\varphi}{\alpha}.
\end{eqnarray}
Therefore, ~\eqref{2.4} is derived from ~\eqref{2.3} and ~\eqref{2.9}. The proof is complete.
\end{proof}

\subsection{\textbf{Proof of Theorem $2.1$ and $2.2$}}
\

In this section, we will prove the Theorem $2.1$ and $2.2$.

\begin{proof}[\textbf{Proof of Theorem $2.1$}]
Let $G=\gamma(t)F$ and $\gamma(t)>0$ be non-decreasing. Then
\begin{align}\label{2.10}
\nonumber
(\Delta-\partial_{t}) G=&\gamma(\Delta-\partial_{t})F-\gamma'F\\
\nonumber
\geq &\gamma|f_{ij}+\frac{\varphi}{n}\delta_{ij}|^{2}+(\frac{2\varphi}{n}-\alpha')\frac{1}{\alpha}G
-\gamma\alpha^{2}n^{2}K^{2}-2\nabla f\nabla G\\
\nonumber
&+2h\gamma(\alpha-1)(l-1)u^{l-1}|\nabla f|^{2}+2(l\alpha-1)\gamma u^{l-1}\nabla f\cdot\nabla h\\
\nonumber
&+h\gamma\alpha(l-1)^{2}u^{l-1}|\nabla f|^{2}+h\gamma\alpha(l-1)u^{l-1}\Delta f+\alpha\gamma u^{l-1}\Delta h-\gamma' F\\
\nonumber
=&\gamma|f_{ij}+\frac{\varphi}{n}g_{ij}|^{2}+\Big[(\frac{2\varphi}{n}-\alpha')\frac{1}{\alpha}-\frac{\gamma'}{\gamma}\big]G
-\gamma\alpha^{2}n^{2}K^{2}-2\nabla f\nabla G\\
\nonumber
&+2h\gamma(\alpha-1)(l-1)u^{l-1}|\nabla f|^{2}+2\big[(\alpha-1)+\alpha(l-1)\big]\gamma u^{l-1}\nabla f\cdot\nabla h\\
&+\gamma\alpha(l-1)^{2}hu^{l-1}|\nabla f|^{2}+\gamma\alpha(l-1)hu^{l-1}\Delta f+\alpha\gamma u^{l-1}\Delta h.
\end{align}

Now let $\varphi(r)$ be a $C^2$ function on $[0,\infty)$ such that
\begin{equation*}
\varphi(r)=\left\{\aligned
1 \quad if ~r\in[0,1],\\
0 \quad if ~r\in [2,\infty),
\endaligned\right.
\end{equation*}
and
$$0\leq \varphi(r)\leq 1, \quad \varphi'(r)\leq 0,\quad \varphi''(r)\leq 0, \quad
\frac{|\varphi'(r)|}{\varphi(r)}\leq C,$$
where $C$ is an absolute constant.
Let define by
$$\phi(x,t)=\varphi(d(x,x_{0},t))=\varphi\left(\frac{d(x,x_{0},t)}{R}\right)
=\varphi\left(\frac{\rho(x,t)}{R}\right),$$
where $\rho(x,t)=d(x,x_{0},t)$.
By using  maximum principle, the argument of Calabi ~\cite{2} allows us to
suppose that the function $\phi(x,t)$ with support in $B_{2R,T}$, is $C^2$ at the maximum point.
By utilizing the Laplacian theorem, we deduce that
\begin{eqnarray}\label{2.11}
\frac{|\nabla \phi|^{2}}{\phi}\leq \frac{C}{R^2},\quad
-\Delta \phi\leq \frac{C}{R^2}(1+\sqrt{K}R),
\end{eqnarray}

For any $0\leq T_{1}\leq T$, let $H=\phi G$ and $(x_{1},t_{1})$ be the point in $B_{2R,T_1}$
at which $H$ attains its maximum value. We can suppose that
the value is positive, because otherwise the proof is trivial.
Then at the point $(x_{1}, t_{1})$, we infer
\begin{equation}\label{2.12}
\left.\begin{array}{rcl} 0=\nabla (\phi G)=G\nabla \phi +\phi\nabla
G,\vspace{2mm}
\\
\Delta(\phi G)\leq 0,\vspace{2mm}
\\
 \partial_t (\phi G)\geq 0.\end{array}\right\}
\end{equation}
By the evolution formula of the geodesic length under the Ricci flow ~\cite{6}, we calculate
\begin{align*}
\phi_{t}G=&-G\phi'\left(\frac{\rho}{R}\right)\frac{1}{R}\frac{d\rho}{dt}
=G\phi'\left(\frac{\rho}{R}\right)\int_{\gamma_{t_1}}\mathrm{Ric}(S,S)ds\\
\leq &G\phi'\left(\frac{\rho}{R}\right)\frac{1}{R}K\rho\leq G\phi'\left(\frac{\rho}{R}\right)K_{2}\leq G\sqrt{C}K,
\end{align*}
where $\gamma_{t_1}$ is the geodesic connecting $x$ and $x_0$ under the metric
$g(t_1)$, $S$ is the unite tangent vector to $\gamma_{t_1}$, and $ds$ is the
element of the arc length.

All the following computations are at the point $(x_{1}, t_{1})$.
It is not difficult to find that
\begin{eqnarray}\label{2.13}
\nonumber
|f_{ij}+\frac{\varphi}{n}\delta_{ij}|^{2}&\geq&\frac{1}{n}\Big(\mathbf{tr}|f_{ij}+\frac{\varphi}{n}\delta_{ij}| \Big)^{2}\\
\nonumber
&=&\frac{1}{n}\Big(\Delta f +\varphi\Big)\\
\nonumber
&=&\frac{1}{n}\Big[-\frac{1}{\alpha}F-\frac{1}{\alpha}(\alpha-1)|\nabla f|^{2}\Big]^{2}\\
&=&\frac{1}{\alpha^{2}n}\Big[\frac{G}{\gamma}+(\alpha-1)|\nabla f|^{2}\Big]^{2}.
\end{eqnarray}
and
\begin{eqnarray}\label{2.14}
\nonumber
\Delta f&=&f_{t}-|\nabla f|^{2}-cu^{l-1}\\
&=&-\frac{F}{\alpha}-\frac{\alpha-1}{\alpha}|\nabla f|^{2}-\varphi<0.
\end{eqnarray}

To obtain main results, two cases will be shown.

\textbf{Case $1$}\quad $l\leq 1$.\\
 From (2.14), we have $\Delta f\leq 0$. Then by substituting it into ~\eqref{2.10}, we obtain
\begin{align*}
\nonumber
(\Delta-\partial_{t}) G=&\gamma(\Delta-\partial_{t})F-\gamma'F\\
\nonumber
\geq &\gamma|f_{ij}+\frac{\varphi}{n}\delta_{ij}|^{2}+\Big[(\frac{2\varphi}{n}-\alpha')\frac{1}{\alpha}-\frac{\gamma'}{\gamma}\big]G
-\gamma\alpha^{2}n^{2}K^{2}-2\nabla f\nabla G\\
\nonumber
&+2h\gamma(\alpha-1)(l-1)u^{l-1}|\nabla f|^{2}+\alpha\gamma u^{l-1}\Delta h\\
&+2\big[(\alpha-1)+\alpha(l-1)\big]\gamma u^{l-1}\nabla f\cdot\nabla h,
\end{align*}
where we drop one term $h\gamma\alpha(l-1)^{2}u^{l-1}|\nabla f|^{2}$.
 Using ~\eqref{2.13}, we infer
\begin{align}\label{2.15}
\nonumber
0\geq&(\Delta-\partial_{t}) (\phi G)\\
\nonumber
=&G\Big(\Delta\phi-2\frac{|\nabla\phi|^2}{\phi}\Big)+\phi(\Delta-\partial_{t})G- G\phi_{t}\\
\nonumber
\geq&G\Big(\Delta\phi-2\frac{|\nabla\phi|^2}{\phi}\Big)+\frac{\phi\gamma}{\alpha^{2}n}\Big[\frac{G}{\gamma}+(\alpha-1)|\nabla f|^{2}\Big]^{2}\\
\nonumber
&+\Big[(\frac{2\varphi}{2}-\alpha')\frac{1}{\alpha}-\frac{\gamma'}{\gamma}\Big]\phi G-\gamma\phi\alpha^{2}n^{2}K^{2}-2\phi\nabla f\nabla G\\
\nonumber
&+2h\phi\gamma(\alpha-1)(l-1)u^{l-1}|\nabla f|^{2}+\phi\alpha\gamma u^{l-1}\Delta h\\
&+2\big[(\alpha-1)+\alpha(l-1)\big]\phi\gamma u^{l-1}\nabla f\cdot\nabla h-G\sqrt{C}K.
\end{align}
Multiply $\phi$ to inequality ~\eqref{2.15}, we have
\begin{eqnarray*}
\nonumber
0&\geq&\phi G\Big[\Delta\phi-2\frac{|\nabla\phi|^2}{\phi}+(\frac{2\varphi}{n}-\alpha')\frac{\phi}{\alpha}-\frac{\gamma'}{\gamma}\phi\Big]
+\frac{\phi^{2}\gamma}{\alpha^{2}n}\Big[\frac{G}{\gamma}+(\alpha-1)|\nabla f|^{2}\Big]^{2}\\
\nonumber
&&-\gamma\phi^{2}\alpha^{2}n^{2}K^{2}-2\phi^{2}\nabla f\nabla G+2h\phi^{2}\gamma(\alpha-1)(l-1)u^{l-1}|\nabla f|^{2}\\
\nonumber
&&+2\big[(\alpha-1)+\alpha(l-1)\big]\phi^{2}\gamma u^{l-1}\nabla f\cdot\nabla h+\phi^{2}\alpha\gamma u^{l-1}\Delta h
-\phi G\sqrt{C}K\\
\nonumber
&\geq&\phi G\Big[\Delta\phi-2\frac{|\nabla\phi|^2}{\phi}+(\frac{2\varphi}{n}-\alpha')\frac{1}{\alpha}-\frac{\gamma'}{\gamma}\Big]
+\frac{\phi^{2}G^{2}}{\alpha^{2}n\gamma}+\frac{\phi^{2}(\alpha-1)^{2}\gamma}{n\alpha^{2}}|\nabla f|^{4}\\
\nonumber
&&
+\frac{2\phi^{2}(\alpha-1)}{n\alpha^{2}}G|\nabla f|^{2}-\gamma\phi^{2}\alpha^{2}n^{2}K^{2}+2\phi G\nabla\phi\nabla f\\
\nonumber
&&+2h\phi^{2}\gamma(\alpha-1)(l-1)u^{l-1}|\nabla f|^{2}+\phi^{2}\alpha\gamma u^{l-1}\Delta h
\\
&&+2\big[(\alpha-1)+\alpha(l-1)\big]\phi^{2}\gamma u^{l-1}\nabla f\cdot\nabla h-\phi G\sqrt{C}K.
\end{eqnarray*}
Using the Cauchy inequality
$$\nabla f\cdot\nabla h\geq -|\nabla f||\nabla h|\geq -h|\nabla f|^{2}-\frac{|\nabla h|^2}{4h},$$
$$\nabla f\cdot\nabla h\leq h|\nabla f|^{2}+\frac{|\nabla h|^2}{4h},$$
we conclude
\begin{align}\label{2.16}
\nonumber
0\geq&\phi G\Big[\Delta\phi-2\frac{|\nabla\phi|^2}{\phi}+(\frac{2\varphi}{n}-\alpha')\frac{\phi}{\alpha}-\frac{\gamma'}{\gamma}\phi\Big]
+\frac{\phi^{2}G^{2}}{\alpha^{2}n\gamma}+\frac{\phi^{2}(\alpha-1)^{2}\gamma}{n\alpha^{2}}|\nabla f|^{4}\\
\nonumber
&
+\frac{2\phi^{2}(\alpha-1)}{n\alpha^{2}}G|\nabla f|^{2}-\gamma\phi^{2}\alpha^{2}n^{2}K^{2}+2\phi G\nabla\phi\nabla f\\
\nonumber
&-2h\phi^{2}\gamma(\alpha-1)(1-l)u^{l-1}|\nabla f|^{2}-2\big[(\alpha-1)+\alpha(1-l)\big]\phi^{2}\gamma u^{l-1}h|\nabla f|^{2}\\
\nonumber
&-\frac{1}{2}\big[(\alpha-1)+\alpha(1-l)\big]\phi^{2}\gamma u^{l-1}\frac{|\nabla h|^2}{h}+\phi^{2}\alpha\gamma u^{l-1}\Delta h-\phi G\sqrt{C}K\\
\nonumber
\geq&\phi G\Big[\Delta\phi-2\frac{|\nabla\phi|^2}{\phi}+(\frac{2\varphi}{n}-\alpha')\frac{\phi}{\alpha}-\frac{\gamma'}{\gamma}\phi\Big]
+\frac{\phi^{2}G^{2}}{\alpha^{2}n\gamma}+\frac{\phi^{2}(\alpha-1)^{2}\gamma}{n\alpha^{2}}|\nabla f|^{4}\\
\nonumber
&
+\frac{2\phi^{2}(\alpha-1)}{n\alpha^{2}}G|\nabla f|^{2}-\gamma\phi^{2}\alpha^{2}n^{2}K^{2}+2\phi G\nabla\phi\nabla f\\
\nonumber
&-2\big[\alpha(3-2\alpha)-1\big]\phi^{2}\gamma u^{l-1}h|\nabla f|^{2}-\frac{1}{2}\big[(\alpha-1)+\alpha(1-l)\big]
\phi^{2}\gamma u^{l-1}\frac{|\nabla h|^2}{h}\\
&+\phi^{2}\alpha\gamma u^{l-1}\Delta h-\phi G\sqrt{C}K,
\end{align}
where we use the fact that $(\alpha-1)(l-1)+(\alpha-1)+\alpha(1-l)\leq \alpha(3-2l)-1$.
Further using the inequality $Ax^{2}+Bx\geq -\frac{B^2}{4A}$ with $A>0$, we have
\begin{equation*}
\frac{2\phi^{2}(\alpha-1)}{n\alpha^{2}}G|\nabla f|^{2}+2\phi G\nabla\phi\nabla f
\geq -\frac{n\alpha^{2}}{2(\alpha-1)}\frac{|\nabla\phi|^{2}}{\phi}\phi G,
\end{equation*}
and
\begin{align*}
&\frac{\phi^{2}(\alpha-1)^{2}\gamma}{n\alpha^{2}}|\nabla f|^{4}-2\big[\alpha(3-2\alpha)-1\big]\phi^{2}\gamma u^{l-1}h|\nabla f|^{2}\\
&\geq -\frac{n\alpha^{2}\big[\alpha(3-2\alpha)-1\big]^2}{(\alpha-1)^2}\gamma\phi^{2}u^{2(l-1)}h^{2}\\
&\geq -\frac{n\alpha^{2}\big[\alpha(3-2\alpha)-1\big]^2}{(\alpha-1)^2}\gamma\phi^{2}\overline{u}^{2}_{1}\delta^{2}_{1}.
\end{align*}
Substituting above two inequalities into ~\eqref{2.16}, we deduce that
\begin{eqnarray*}
0&\geq&\phi G\Big[\Delta\phi-2\frac{|\nabla\phi|^2}{\phi}+(\frac{2\varphi}{n}-\alpha')\frac{\phi}{\alpha}-\frac{\gamma'}{\gamma}\phi
-\frac{n\alpha^{2}}{2(\alpha-1)}\frac{|\nabla\phi|^{2}}{\phi}-\sqrt{C}K\Big]\\
&&
+\frac{\phi^{2}G^{2}}{\alpha^{2}n\gamma}
-\gamma\phi^{2}\alpha^{2}n^{2}K^{2}-\frac{n\alpha^{2}\big[\alpha(3-2\alpha)-1\big]^2}
{(\alpha-1)^2}\gamma\phi^{2}\overline{u}^{2}_{1}\delta^{2}_{1}\\
&&
-\frac{1}{2}\big[(\alpha-1)+\alpha(1-l)\big]
\phi^{2}\gamma \overline{u}_{1}\delta_{2}-\phi^{2}\alpha\gamma \overline{u}_{1}\delta_{3}.
\end{eqnarray*}
Applying ~\eqref{2.11}, we infer
\begin{eqnarray}\label{2.17}
\nonumber
0&\geq&\left[-\frac{C}{R^2}(1+\sqrt{k}R)-\frac{2C}{R^2}
+(\frac{2\varphi}{n}-\alpha')\frac{\phi}{\alpha}-\frac{\gamma'}{\gamma}\phi\right.\\
\nonumber
&&\left.
-\frac{n\alpha^{2}}{2(\alpha-1)}\frac{C}{R^2}-\sqrt{C}K\right]\phi G+\frac{\phi^{2}G^{2}}{\alpha^{2}n\gamma}-\gamma\phi^{2}\alpha^{2}n^{2}K^{2}\\
\nonumber
&&
-\frac{n\alpha^{2}\big[\alpha(3-2\alpha)-1\big]^2}{(\alpha-1)^2}\gamma\phi^{2}\overline{u}^{2}_{1}\delta^{2}_{1}\\
&&-\frac{1}{2}\big[(\alpha-1)+\alpha(1-l)\big]
\phi^{2}\gamma \overline{u}_{1}\delta_{2}-\phi^{2}\alpha\gamma \overline{u}_{1}\delta_{3}.
\end{eqnarray}
For the inequality $Ax^{2}-Bx\leq C$, one has $x\leq \frac{B}{A}+\left(\frac{C}{A}\right)^{\frac{1}{2}}$, where $A, B, C>0$.
By using this inequality to ~\eqref{2.17} and then we arrive at
\begin{eqnarray*}
\phi G(x,T_{1})&\leq& (\phi G)(x_{1},t_1)\\
&\leq&\Big\{n\gamma\alpha^{2}\left[\frac{C}{R^2}(1+\sqrt{K}R)
+\frac{n\alpha^{2}}{2(\alpha-1)}\frac{C}{R^2}+\sqrt{C}K\right]\\
&&+n\gamma\alpha^{2}\left[\frac{\gamma'}{\gamma}-(\frac{2\varphi}{n}-\frac{\alpha'}{\alpha})\frac{1}{\alpha}\right]
+ n^{\frac{3}{2}}\gamma\alpha^{2}\phi K\\
&&+\frac{n\alpha^{2}\big[\alpha(3-2\alpha)-1\big]}{\alpha-1}\phi\gamma\overline{u}_{1}\delta_{1}
+\alpha\phi\gamma\sqrt{n\overline{u}_{1}\delta_3}\\
&&+\sqrt{\frac{\big[(\alpha-1)+\alpha(1-l)\big]}{2}}
\alpha\phi\gamma \sqrt{n\overline{u}_{1}\delta_{2}}\Big\}(x_{1},t_1).
\end{eqnarray*}
If $\gamma$ is nondecreasing which satisfies the system
\begin{equation}\label{2.18}
\left\{\aligned
\frac{\gamma'}{\gamma}-(\frac{2\varphi}{n}-\alpha')\frac{1}{\alpha}\leq 0,\\
\frac{\gamma\alpha^4}{\alpha-1}\leq C.
\endaligned\right.
\end{equation}
Recall that $\alpha(t)$ and $\gamma(t)$ are non-decreasing and $t_{1}<T_{1}$. Hence, we have
\begin{eqnarray*}
\phi G(x,T_{1})&\leq& (\phi G)(x_{1},t_1)\\
&\leq& n\gamma(T_{1})\alpha^{2}(T_{1})\left[\frac{C}{R^2}\Big(1+\sqrt{K}R\Big)+K\right]+\frac{n^{2}C}{R^2}\\
&&+ n^{\frac{3}{2}}\gamma(T_{1})\alpha^{2}(T_{1})\phi K+\phi\alpha(T_{1})\gamma(T_{1})\sqrt{n\overline{u}_{1}\delta_3}\\
&&+\frac{n\alpha^{2}(T_1)\big[\alpha(T_1)(3-2\alpha(T_1))-1\big]}{\alpha(T_1)-1}\phi\gamma(T_1)\overline{u}_{1}\delta_{1}\\
&&+\sqrt{\frac{\big[(\alpha(T_1)-1)+\alpha(T_1)(1-l)\big]}{2}}
\alpha(T_1)\phi\gamma(T_1) \sqrt{n\overline{u}_{1}\delta_{2}}.
\end{eqnarray*}
Hence, we have for  $\phi\equiv 1$ on $B_{R,T}$,
\begin{eqnarray*}
F(x,T_{1})&\leq&n\alpha^{2}(T_{1})\left[\frac{C}{R^2}\Big(1+\sqrt{K}R\Big)+CK\right]+\frac{n^{2}C}{R^{2}\gamma(T_{1})}\\
&&+ n^{\frac{3}{2}}\alpha^{2}(T_{1}) K+\alpha(T_{1})\gamma(T_{1})\sqrt{n\overline{u}_{1}\delta_3}\\
&&+\frac{n\alpha^{2}(T_1)\big[\alpha(T_1)(3-2\alpha(T_1))-1\big]}{\alpha(T_1)-1}\overline{u}_{1}\delta_{1}\\
&&+\sqrt{\frac{\big[(\alpha(T_1)-1)+\alpha(T_1)(1-l)\big]}{2}}
\alpha(T_1) \sqrt{n\overline{u}_{1}\delta_{2}}.
\end{eqnarray*}

If $\gamma$ is nondecreasing which satisfies the system
\begin{equation}\label{2.19}
\left\{\aligned
\frac{\gamma'}{\gamma}-(\frac{2\varphi}{n}-\alpha')\frac{1}{\alpha}\leq 0,\\
\frac{\gamma}{\alpha-1}\leq C.
\endaligned\right.
\end{equation}
Recall that $\alpha(t)$ and $\gamma(t)$ are non-decreasing and $t_{1}<T_{1}$. Hence, we have
\begin{eqnarray*}
\nonumber
\phi G(x,T_{1})&\leq& (\phi G)(x_{1},t_1)\\
\nonumber
&\leq& n\gamma(T_{1})\alpha^{2}(T_{1})\left[\frac{C}{R^2}\Big(1+\sqrt{K}R\Big)+\frac{n\alpha^{2}}{\alpha-1}\frac{C}{R^2}+CK\right]\\
&&+ n^{\frac{3}{2}}\gamma(T_{1})\alpha^{2}(T_{1})\phi K+\phi\alpha(T_{1})\gamma(T_{1})\sqrt{n\overline{u}_{1}\delta_3}\\
&&+\frac{n\alpha^{2}(T_1)\big[\alpha(T_1)(3-2\alpha(T_1))-1\big]}{\alpha(T_1)-1}\phi\gamma(T_1)\overline{u}_{1}\delta_{1}\\
&&+\sqrt{\frac{\big[(\alpha(T_1)-1)+\alpha(T_1)(1-l)\big]}{2}}
\alpha(T_1)\phi\gamma(T_1) \sqrt{n\overline{u}_{1}\delta_{2}}.
\end{eqnarray*}
Hence, we have for  $\phi\equiv 1$ on $B_{R,T}$,
\begin{eqnarray*}
\nonumber
F(x,T_{1})&\leq&n\alpha^{2}(T_{1})\left[\frac{C}{R^2}\Big(1+\sqrt{K}R\Big)+K\right]+\frac{n^{2}C\alpha^{4}}{R^{2}\gamma(T)}\\
&&+ n^{\frac{3}{2}}\gamma(T_{1})\alpha^{2}(T_{1}) K+\alpha(T_{1})\sqrt{n\overline{u}_{1}\delta_3}\\
&&+\frac{n\alpha^{2}(T_1)\big[\alpha(T_1)(3-2\alpha(T_1))-1\big]}{\alpha(T_1)-1}\overline{u}_{1}\delta_{1}\\
&&+\sqrt{\frac{\big[(\alpha(T_1)-1)+\alpha(T_1)(1-l)\big]}{2}}
\alpha(T_1) \sqrt{n\overline{u}_{1}\delta_{2}}.
\end{eqnarray*}
Because $T_{1}$ is arbitrary in $0<T_{1}<T$ and
$\alpha(3-2\alpha)-1\leq \alpha-1$ and $\alpha-1+\alpha(1-l)\leq \alpha(2-l)$. Thus  the conclusion is valid.

\textbf{Case $2$}\quad $l>1$.\\
Substituting  ~\eqref{2.14} into ~\eqref{2.10}, we have
 \begin{align*}
(\Delta-\partial_{t}) G
\geq &\gamma|f_{ij}+\frac{\varphi}{n}\delta_{ij}|^{2}+\Big[(\frac{2\varphi}{n}-\alpha')\frac{1}{\alpha}-\frac{\gamma'}{\gamma}\big]G
-\gamma\alpha^{2}n^{2}K^{2}\\
\nonumber
&-2\nabla f\nabla G+2(l\alpha-1)\gamma u^{l-1}\nabla f\cdot\nabla h-h(l-1)u^{l-1}G\\
&-h\gamma(l-1)u^{l-1}\alpha\varphi
+h\gamma(l-1)(l\alpha-1)u^{l-1}|\nabla f|^{2}+\alpha\gamma u^{l-1}\Delta h.
\end{align*}
 Using ~\eqref{2.13}, we infer
\begin{eqnarray}\label{2.20}
\nonumber
0&\geq&(\Delta-\partial_{t}) (\phi G)\\
\nonumber
&=&G\Big(\Delta\phi-2\frac{|\nabla\phi|^2}{\phi}\Big)+\phi(\Delta-\partial_{t})G-\gamma G\phi_{t}\\
\nonumber
&\geq&G\Big(\Delta\phi-2\frac{|\nabla\phi|^2}{\phi}\Big)+\frac{\phi\gamma}{\alpha^{2}n}\Big[\frac{G}{\gamma}+(\alpha-1)|\nabla f|^{2}\Big]^{2}\\
\nonumber
&&+\Big[(\frac{2\varphi}{2}-\alpha')\frac{1}{\alpha}-\frac{\gamma'}{\gamma}\Big]\phi G-\gamma\phi\alpha^{2}n^{2}K^{2}-2\phi\nabla f\nabla G\\
\nonumber
&&+2(l\alpha-1)\phi\gamma u^{l-1}\nabla f\cdot\nabla h
-h(l-1)u^{l-1}G-h\gamma(l-1)u^{l-1}\phi\alpha\varphi\\
&&+\phi\alpha\gamma u^{l-1}\Delta h+h\phi\gamma(l-1)(l\alpha-1)u^{l-1}|\nabla f|^{2}-G\sqrt{C}K.
\end{eqnarray}
Multiply $\phi$ to (2.20), we have
\begin{eqnarray}\label{2.21}
\nonumber
0&\geq&\phi G\Big[\Delta\phi-2\frac{|\nabla\phi|^2}{\phi}+(\frac{2\varphi}{n}-\alpha')\frac{\phi}{\alpha}-\frac{\gamma'}{\gamma}\phi\Big]
+\frac{\phi^{2}\gamma}{\alpha^{2}n}\Big[\frac{G}{\gamma}+(\alpha-1)|\nabla f|^{2}\Big]^{2}\\
\nonumber
&&-\gamma\phi^{2}\alpha^{2}n^{2}K^{2}-2\phi^{2}\nabla f\nabla G+2(l\alpha-1)\phi^{2}\gamma u^{l-1}\nabla f\cdot\nabla h\\
\nonumber
&&+\phi^{2}\alpha\gamma u^{l-1}\Delta h
-h(l-1)u^{l-1}\phi G-h\gamma(l-1)u^{l-1}\phi^{2}\alpha\varphi\\
\nonumber
&&+h\phi^{2}\gamma(l-1)(l\alpha-1)u^{l-1}|\nabla f|^{2}-\phi G\sqrt{C}K\\
\nonumber
&\geq&\phi G\Big[\Delta\phi-2\frac{|\nabla\phi|^2}{\phi}+(\frac{2\varphi}{n}-\alpha')\frac{\phi}{\alpha}-\frac{\gamma'}{\gamma}\phi\Big]
+\frac{\phi^{2}G^{2}}{\alpha^{2}n\gamma}\\
\nonumber
&&
+\frac{2\phi^{2}(\alpha-1)}{n\alpha^{2}}G|\nabla f|^{2}-\gamma\phi^{2}\alpha^{2}n^{2}K^{2}+2\phi G\nabla\phi\nabla f\\
\nonumber
&&+2(l\alpha-1)\phi^{2}\gamma u^{l-1}\nabla f\cdot\nabla h+\phi^{2}\alpha\gamma u^{l-1}\Delta h\\
\nonumber
&&-h(l-1)u^{l-1}\phi G
-h\gamma(l-1)u^{l-1}\phi^{2}\alpha\varphi\\
&&
+h\phi^{2}\gamma(l-1)(l\alpha-1)u^{l-1}|\nabla f|^{2}-\phi G\sqrt{C}K,
\end{eqnarray}
where we drop one term $\frac{\phi^{2}(\alpha-1)^{2}\gamma}{n\alpha^{2}}|\nabla f|^{4}$.\\

Further using the inequality $Ax^{2}+Bx\geq -\frac{B^2}{4A}$ with $A>0$, we have
\begin{equation*}
\frac{2\phi^{2}(\alpha-1)}{n\alpha^{2}}G|\nabla f|^{2}+2\phi G\nabla\phi\nabla f
\geq -\frac{n\alpha^{2}}{2(\alpha-1)}\frac{|\nabla\phi|^{2}}{\phi}\phi G,
\end{equation*}
and
\begin{align*}
&h\phi^{2}\gamma(l-1)(l\alpha-1)u^{l-1}|\nabla f|^{2}+2(l\alpha-1)\phi^{2}\gamma u^{l-1}\nabla f\cdot\nabla h\\
&\geq h\phi^{2}\gamma(l-1)(l\alpha-1)u^{l-1}|\nabla f|^{2}-2(l\alpha-1)\phi^{2}\gamma u^{l-1}|\nabla f|\cdot|\nabla h|\\
&\geq -\frac{l\alpha-1}{l-1}\gamma\phi^{2}u^{l-1}\frac{|\nabla h|^{2}}{h}\\
&\geq -\frac{l\alpha-1}{l-1}\gamma\phi^{2}\overline{u}_{2}\delta_{2}.
\end{align*}
Substituting above two inequalities into (2.21), we deduce that
\begin{eqnarray*}
0&\geq&\phi G\Big[\Delta\phi-2\frac{|\nabla\phi|^2}{\phi}+(\frac{2\varphi}{n}-\alpha')\frac{\phi}{\alpha}-\frac{\gamma'}{\gamma}\phi
-\frac{n\alpha^{2}}{2(\alpha-1)}\frac{|\nabla\phi|^{2}}{\phi}\\
&&-\delta_{1}(l-1)\overline{u}_{2}-\sqrt{C}K\Big]+\frac{\phi^{2}G^{2}}{\alpha^{2}n\gamma}
-\gamma\phi^{2}\alpha^{2}n^{2}K^{2}\\
&&
-\frac{l\alpha-1}{l-1}\gamma\phi^{2}\overline{u}_{2}\delta_{2}
-\gamma(l-1)\overline{u}_{2}\phi^{2}\delta_{1}\alpha\varphi
-\phi^{2}\alpha\gamma \overline{u}_{2}\delta_{3}.
\end{eqnarray*}
Applying ~\eqref{2.11}, we infer
\begin{eqnarray}\label{2.22}
\nonumber
0&\geq&\left[-\frac{C}{R^2}(1+\sqrt{k}R)-\frac{2C}{R^2}
+(\frac{2\varphi}{n}-\alpha')\frac{\phi}{\alpha}-\frac{\gamma'}{\gamma}\phi
-\frac{n\alpha^{2}}{2(\alpha-1)}\frac{C}{R^2}\right.\\
\nonumber
&&\left.-\delta_{1}(l-1)\overline{u}_{2}-\sqrt{C}K\right]\phi G+\frac{\phi^{2}G^{2}}{\alpha^{2}n\gamma}
-\gamma\phi^{2}\alpha^{2}n^{2}K^{2}\\
&&
-\frac{l\alpha-1}{l-1}\gamma\phi^{2}\overline{u}_{2}\delta_{2}
-\gamma(l-1)\overline{u}_{2}\phi^{2}\delta_{1}\alpha\varphi
-\phi^{2}\alpha\gamma \overline{u}_{2}\delta_{3}.
\end{eqnarray}
For the inequality $Ax^{2}-2Bx\leq C$, one has $x\leq \frac{2B}{A}+\left(\frac{C}{A}\right)^{\frac{1}{2}}$, where $A, B, C>0$.
By using this inequality to ~\eqref{2.22} and then we arrive at
\begin{eqnarray*}
\nonumber
\phi G(x,T_{1})&\leq& (\phi G)(x_{1},t_1)\\
\nonumber
&\leq& \left\{n\gamma\alpha^{2}\left[\frac{C}{R^2}(1+\sqrt{K}R)+\frac{n\alpha^{2}}{2(\alpha-1)}\frac{C}{R^2}
+\delta_{1}(l-1)\overline{u}_{2}+\sqrt{C}K\right]\right.\\
&&+n\gamma\alpha^{2}\left[\frac{\gamma'}{\gamma}-(\frac{2\varphi}{n}-\frac{\alpha'}{\alpha})\frac{1}{\alpha}\right]
+ n^{\frac{3}{2}}\gamma\phi\alpha^{2} K+\alpha\phi\gamma\sqrt{\frac{n(l\alpha-1)\overline{u}_{2}\delta_{2}}{l-1}}\\
&&\left.
+\alpha^{\frac{3}{2}}\gamma\phi\sqrt{n(l-1)\delta_{1}\varphi}+\alpha^{\frac{3}{2}}\gamma\phi\sqrt{n\delta_{3}\overline{u}_{2}}
\right\}(x_{1},t_1).
\end{eqnarray*}
If $\gamma$ is nondecreasing which satisfies the system
\begin{equation}\label{2.23}
\left\{\aligned
\frac{\gamma'}{\gamma}-(\frac{2\varphi}{n}-\alpha')\frac{1}{\alpha}\leq 0,\\
\frac{\gamma\alpha^4}{\alpha-1}\leq C.
\endaligned\right.
\end{equation}
Recall that $\alpha(t)$ and $\gamma(t)$ are non-decreasing and $t_{1}<T_{1}$. Hence, we have
\begin{eqnarray*}
\nonumber
\phi G(x,T_{1})&\leq& (\phi G)(x_{1},t_1)\\
\nonumber
&\leq& n\gamma(T_{1})\alpha^{2}(T_{1})\left[\frac{C}{R^2}\Big(1+\sqrt{K}R\Big)+K\right]+\frac{n^{2}C}{R^2}
\\
&&+n\gamma(T_{1})\alpha^{2}(T_{1})(l-1)\delta_{1}\overline{u}_{2}+ n^{\frac{3}{2}}\gamma\phi(T_{1})\alpha^{2}(T_{1}) K\\
\nonumber
&&
+\alpha(T_1)\phi\gamma(T_1)\sqrt{\frac{n(l\alpha(T_1)-1)\overline{u}_{2}\delta_{2}}{l-1}}\\
&&
+\alpha^{\frac{3}{2}}(T_{1})\gamma(T_{1})\phi\sqrt{n(l-1)\delta_{1}\varphi}
+\alpha^{\frac{3}{2}}(T_{1})\gamma(T_{1})\phi\sqrt{n\delta_{3}\overline{u}_{2}}.
\end{eqnarray*}
Hence, we have for  $\phi\equiv 1$ on $B_{R,T}$,
\begin{eqnarray*}
F(x,T_{1})&\leq&n\alpha^{2}(T_{1})\left[\frac{C}{R^2}\Big(1+\sqrt{K}R\Big)+K\right]+\frac{n^{2}C}{R^2}
\\
&&+n\alpha^{2}(T_{1})(l-1)\delta_{1}\overline{u}_{2}+ n^{\frac{3}{2}}\alpha^{2}(T_{1}) K\\
&&+
\alpha(T_1)\sqrt{\frac{n(l\alpha(T_1)-1)\overline{u}_{2}\delta_{2}}{l-1}}\\
&&
+\alpha^{\frac{3}{2}}(T_{1})\sqrt{n(l-1)\delta_{1}\varphi}+\alpha^{\frac{3}{2}}(T_{1})\sqrt{n\delta_{3}\overline{u}_{2}}.
\end{eqnarray*}

If $\gamma$ is nondecreasing which satisfies the system
\begin{equation}\label{2.24}
\left\{\aligned
\frac{\gamma'}{\gamma}-(\frac{2\varphi}{n}-\alpha')\frac{1}{\alpha}\leq 0,\\
\frac{\gamma}{\alpha-1}\leq C.
\endaligned\right.
\end{equation}
Recall that $\alpha(t)$ and $\gamma(t)$ are non-decreasing and $t_{1}<T_{1}$. Hence, we have
\begin{eqnarray*}
\nonumber
\phi G(x,T_{1})&\leq& (\phi G)(x_{1},t_1)\\
\nonumber
&\leq& n\gamma(T_{1})\alpha^{2}(T_{1})\left[\frac{C}{R^2}\Big(1+\sqrt{K}R\Big)+\frac{C n\alpha^{2}(T)}{R^2}+CK\right]\\
&&+n\gamma(T_{1})\alpha^{2}(T_{1})(l-1)\delta_{1}\overline{u}_{2}+ n^{\frac{3}{2}}\gamma(T_{1})\phi\alpha^{2}(T_{1}) K\\
&&+
\alpha(T_1)\phi\gamma(T_1)\sqrt{\frac{n(l\alpha(T_1)-1)\overline{u}_{2}\delta_{2}}{l-1}}\\
&&+\alpha^{\frac{3}{2}}(T_{1})\gamma(T_{1})\phi\sqrt{n(l-1)\delta_{1}\varphi}
+\alpha^{\frac{3}{2}}(T_{1})\gamma(T_{1})\phi\sqrt{n\delta_{3}\overline{u}_{2}}.
\end{eqnarray*}
Hence, we have for  $\phi\equiv 1$ on $B_{R,T}$,
\begin{eqnarray*}
F(x,T_{1})&\leq&n\alpha^{2}(T_{1})\left[\frac{C}{R^2}\Big(1+\sqrt{K}R\Big)+\frac{C n\alpha^{2}(T)}{R^2}+CK\right]\\
&&+n\alpha^{2}(T_{1})(l-1)\delta_{1}\overline{u}_{2}+ n^{\frac{3}{2}}\alpha^{2}(T_{1}) K\\
&&+
\alpha(T_1)\sqrt{\frac{n(l\alpha(T_1)-1)\overline{u}_{2}\delta_{2}}{l-1}}\\
&&
+\alpha^{\frac{3}{2}}(T_{1})\sqrt{n(l-1)\delta_{1}\varphi}
+\alpha^{\frac{3}{2}}(T_{1})\sqrt{n\delta_{3}\overline{u}_{2}}.
\end{eqnarray*}
Because $T_{1}$ is arbitrary in $0<T_{1}<T$,  the conclusion is valid.

\end{proof}

\begin{proof}[\textbf{Proof of Theorem $2.2$}]
Since $u_{t}=\nabla u+h(x,t)u^{l}$, we have
\begin{eqnarray*}
\partial_{t}(|\nabla u|^{2})&=&2\mathrm{Ric}(\nabla u, \nabla u)+2<\nabla u, \nabla(u_t)>\\
&=&2\mathrm{Ric}(\nabla u, \nabla u)+2<\nabla u, \nabla(\Delta u)>+2<\nabla u,\nabla(h(x,t)u^l)>.
\end{eqnarray*}
Applying Bochner's formula, above equation becomes
\begin{eqnarray}\label{2.20}
\partial_{t}(|\nabla u|^{2})&=&\Delta (|\nabla u|^2)-2|\nabla^{2}u|^{2}+2<\nabla u,\nabla(h(x,t)u^l)>.
\end{eqnarray}
Besides,
\begin{eqnarray}\label{2.21}
\partial_{t}(u^{2})&=&\Delta(u^2)-2|\nabla u|^{2}+2h(t)u^{l+1}.
\end{eqnarray}
Let $\overline{F}=t|\nabla u|^{2}+Xu^{2}$, where $X$ is a constant to be decided.
Then combining ~\eqref{2.20} with ~\eqref{2.21}, we obtain
\begin{eqnarray}\label{2.21}
\nonumber
\partial_{t}\overline{F}&=&|\nabla u|^{2}+t[\Delta (|\nabla u|^2)-2|\nabla^{2}u|^{2}+2<\nabla u,\nabla(h(t)u^l)>]\\
\nonumber
&&+X[\Delta(u^2)-2|\nabla u|^{2}+2h(t)u^{l+1}]\\
\nonumber
&=&|\nabla u|^{2}+t[\Delta (|\nabla u|^2)-2|\nabla^{2}u|^{2}+2h(t)(l-1)u^{l-2}|\nabla u|^{2}\\
\nonumber
&&+X[\Delta(u^2)-2|\nabla u|^{2}+2h(t)u^{l+1}]\\
&\leq &\Delta\overline{F}+(1-2X)|\nabla u|^{2}.
\end{eqnarray}
Selecting $X=\frac{1}{2}$ and using maximum principle, we infer
$$\overline{F}(x,t)\leq \max_{x\in M^n}\overline{F}(x,0)=\frac{1}{2}\max_{x\in M^n}u^{2}(x,0),$$
which implies the theorem is valid.
\end{proof}

\section{\textbf{Gradient estimates for the equation $(1.6)$}}

Recalled that $(M^{n}, g(t))$ is called a gradient Ricci soliton if there is a smooth function $f$ on $M^n$ such that for some constant $c\in \mathbb{R}$, which satisfies
\begin{equation}\label{3.1}
Rc=cg+D^{2}f,
\end{equation}
where $D^{2}f$ is the Hessian of $f$. Let $u=e^{f}$, after some computation applying (3.1) as done in [21], we get
$$\Delta u+2cu\log u=(A_{0}-nc)u\quad in\quad M^n$$
for some constant $A_0$, where $n$ is the dimension of $M^n$.
In [21], Ma proved the local gradient estimate of positive solutions to the
equation
$$\Delta u+au\log u+bu=0\quad in\quad M^{n},$$
where $a>0$ and $b\in \mathbb{R}$  are constants for complete noncaompact manifolds with a fixed metric and curvature locally bounded below. In [31], Yang generalized Ma's result and derived a local gradient estimates for positive solutions to the equation
$$u_{t}=\Delta u+au\log u+bu\quad in\quad M^{n}\times(0,T],$$
where $a,b\in\mathbb{R}$ are constants for complete noncompact manifolds with a fixed metric and
curvature locally bounded below. Replacing $u$ by $e^{b/a}u$, above equation becomes
\begin{equation}\label{3.2}
u_{t}=\Delta u +au\log u.
\end{equation}
One can find in \cite{29,30} some related results for equation ~\eqref{3.2} on manifolds.

In this section,  we consider the nonlinear parabolic equation ~\eqref{1.6}
under the Ricci flow.

\subsection{\textbf{Main results}}
\

Our main results state as follows.

\begin{thm}
Let $(M^{n}, g(t))_{t\in [0,T]}$ be a complete solution to the Ricci flow ~\eqref{1.7}.
  Assume that $|\mathrm{Ric}(x,t)|\leq K$ for some $K>0$ and all $t\in [0,T]$.
  Suppose that there exist three functions $\alpha(t)$, $\varphi(t)$ and $\gamma(t)$
  satisfy the following conditions (C1), (C2), (C3) and (C4).

Given $x_{0}\in M$ and $R>0$, let $u$ be a positive solution of the nonlinear parabolic equation
$$\partial_{t}u=\Delta u+au\log u$$
in the cube $B_{2R,T}:=\{(x,t)|d(x,x_{0},t)\leq 2R, 0\leq t\leq T\}$, where $a$ is a constant.

(1)\quad For $a\leq 0$.
If $\frac{\gamma\alpha^4}{\alpha-1}\leq C_{1}$ for some constant $C_1$,
then
\begin{eqnarray*}
\frac{|\nabla u|^2}{u^2}-\alpha\frac{u_{t}}{u}+\alpha a\log u
&\leq&
C\alpha^{2}\left(\frac{1}{R^2}+\frac{\sqrt{K}}{R}+K\right)\\
&&+\frac{Cn^2}{R^{2}\gamma}
+n^{\frac{3}{2}}\alpha^{2}K+n|a|\alpha^{2}+\alpha\varphi.
\end{eqnarray*}

If $\frac{\gamma}{\alpha-1}\leq C_{2}$ for some constant $C_2$,
then
\begin{eqnarray*}
\frac{|\nabla u|^2}{u^2}-\alpha\frac{u_{t}}{u}+\alpha a\log u
&\leq&
C\alpha^{2}\left(\frac{1}{R^2}+\frac{\sqrt{K}}{R}+K\right)\\
&&+\frac{Cn^{2}\alpha^{4}}{R^{2}\gamma}
+n^{\frac{3}{2}}\alpha^{2}K+n|a|\alpha^{2}+\alpha\varphi.
\end{eqnarray*}

(2)\quad For $a>0$.
If $\frac{\gamma\alpha^4}{\alpha-1}\leq C_{1}$ for some constant $C_1$,
then
\begin{eqnarray*}
\frac{|\nabla u|^2}{u^2}-\alpha\frac{u_{t}}{u}+\alpha a\log u
&\leq&
C\alpha^{2}\left(\frac{1}{R^2}+\frac{\sqrt{K}}{R}+a+K\right)+\frac{n^{2}C}{R^{2}\gamma}\\
&&+ n^{\frac{3}{2}}\alpha^{2} K+\alpha\varphi.
\end{eqnarray*}

If $\frac{\gamma}{\alpha-1}\leq C_{2}$ for some constant $C_2$,
then
\begin{eqnarray*}
\frac{|\nabla u|^2}{u^2}-\alpha\frac{u_{t}}{u}+\alpha a\log u
&\leq&
C\alpha^{2}\left(\frac{1}{R^2}+\frac{\sqrt{K}}{R}+a+K\right)+\frac{n^{2}C\alpha^4}{R^{2}\gamma}\\
&&+ n^{\frac{3}{2}}\alpha^{2} K+\alpha\varphi.
\end{eqnarray*}
where $C$ is a constant.
\end{thm}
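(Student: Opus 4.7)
The plan is to carry over the machinery of Section~2 to the logarithmic nonlinearity $au\log u$ via the substitution $f := \log u$, under which the equation becomes
\begin{equation*}
f_t = \Delta f + |\nabla f|^2 + af,
\end{equation*}
the direct analogue of \eqref{2.2} with $hu^{l-1}$ replaced by $af$. The natural auxiliary function is then $F := |\nabla f|^2 - \alpha f_t + \alpha af - \alpha\varphi$, with $G := \gamma F$, paralleling the setup before Lemma~2.1.

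The first step is a new lemma analogous to Lemma~2.1. Because $\nabla(af) = a\nabla f$ and $\Delta(af) = a\Delta f$ are linear in $f$, the Cauchy--Schwarz estimates on $\nabla h$ and $\Delta h$ from Lemma~2.1 disappear and the derivation is considerably shorter. Applying Bochner's formula, the commutator $\Delta f_t = (\Delta f)_t - 2R_{ij}f_{ij}$, Young's inequality $2\alpha R_{ij}f_{ij}\leq |f_{ij}|^2 + \alpha^2 n^2 K^2$, and the Ricci-flow evolution $\partial_t|\nabla f|^2 = 2\nabla f\cdot\nabla f_t + 2\mathrm{Ric}(\nabla f,\nabla f)$, then substituting $f_t$ out of $F$ (which yields the identity $\Delta f = -F/\alpha - ((\alpha-1)/\alpha)|\nabla f|^2 - \varphi$, identical to \eqref{2.14}) and invoking (C2) exactly as in Lemma~2.1, one arrives at
\begin{equation*}
(\Delta-\partial_t)F \geq \bigl|f_{ij}+\tfrac{\varphi}{n}\delta_{ij}\bigr|^2 + \Bigl(\tfrac{2\varphi}{n}-\alpha'\Bigr)\tfrac{F}{\alpha} -\alpha^2 n^2 K^2 -2\nabla f\cdot\nabla F + R_a,
\end{equation*}
where $R_a$ is an explicit remainder collecting the two new contributions $2(\alpha-1)\nabla f\cdot\nabla(af) = 2(\alpha-1)a|\nabla f|^2$ and $\alpha a\Delta f$.

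With this lemma in hand, the cutoff-and-maximum-principle argument of Section~2.3 transfers verbatim. I would introduce the standard Calabi-type cutoff $\phi$ satisfying \eqref{2.11}, pass to the spatial maximum of $\phi G$ on $B_{2R,T_1}$, and use $\nabla(\phi G) = 0$, $\Delta(\phi G)\leq 0$, $\partial_t(\phi G)\geq 0$ together with the evolution-of-distance bound $\phi_t G \leq \sqrt{C}\,K\,G$ to distill a quadratic inequality of the form $\phi^2 G^2/(\alpha^2 n\gamma) - B\,\phi G \leq C$. Extracting $\phi G$ via $Ax^2 - Bx \leq C\Rightarrow x \leq B/A + \sqrt{C/A}$, specializing to $\phi\equiv 1$ on $B_{R,T}$, and distinguishing the growth hypotheses $\gamma\alpha^4/(\alpha-1)\leq C_1$ and $\gamma/(\alpha-1)\leq C_2$ produce the four bounds in the statement (two for each sign of $a$).

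The main obstacle is the sign of $a$, which dictates how $R_a$ is absorbed. When $a\leq 0$ the $|\nabla f|^2$-coefficient contributed by $R_a$ is proportional to $(\alpha-1)a\leq 0$; this negative piece is absorbed into the $|\nabla f|^4$-term $(\alpha-1)^2\gamma|\nabla f|^4/(n\alpha^2)$ generated by the square of the trace via $Ax^2 + Bx \geq -B^2/(4A)$, producing the $n|a|\alpha^2$ summand of case~(1). When $a>0$ the same term is nonnegative and can be discarded, and the remaining contribution from $-\alpha a\varphi$ and from the shift of the $F$-coefficient by $-\alpha a/\alpha$ (which must still be compatible with the monotonicity used in Section~2.3) is pushed into the leading bracket, producing the extra $C\alpha^2 a$ summand in case~(2). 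In both sub-cases the stray $\alpha a^2 f$-term that would otherwise involve $\log u$ cancels out after substitution, so the final bound is free of $f$. The remaining bookkeeping is identical to the proof of Theorem~2.1.
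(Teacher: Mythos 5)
Your proposal is correct and follows essentially the same route as the paper: the substitution $f=\log u$, the auxiliary function $F=|\nabla f|^{2}-\alpha f_{t}+\alpha af-\alpha\varphi$, a Bochner-formula lemma paralleling Lemma 2.1 with remainder $2a(\alpha-1)|\nabla f|^{2}+\alpha a\Delta f$, the Calabi cutoff and maximum-principle argument, and the same case split on the sign of $a$ (absorbing the negative $|\nabla f|^{2}$ term into the $|\nabla f|^{4}$ term to get $n|a|\alpha^{2}$ when $a\leq 0$, and shifting the $F$-coefficient by $-a$ to get the $C\alpha^{2}a$ summand when $a>0$). This is exactly the paper's Lemma 3.1 and proof of Theorem 3.1.
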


\begin{cor}  Let $(M^{n}, g(t))_{t\in [0,T]}$ be a complete solution to the Ricci flow ~\eqref{1.7}.
  Assume that $|\mathrm{Ric}(x,t)|\leq K$ for some $K>0$ and all $t\in [0,T]$.
Given $x_{0}\in M$ and $R>0$, let $u$ be a positive solution of the nonlinear parabolic equation  ~\eqref{1.6}
in the cube $B_{2R,T}:=\{(x,t)|d(x,x_{0},t)\leq 2R, 0\leq t\leq T$\}. Then the following special estimates are valid.

1. Li-Yau type:
$$\alpha(t)=constant,\quad \varphi(t)=\frac{\alpha n}{t}+\frac{nK\alpha^2}{\alpha-1}, \gamma(t)=t^{\theta}
\quad  with \quad 0<\theta\leq 2.$$
If $a\leq 0$, then
\begin{align*}
\frac{|\nabla u|^2}{u^2}-\alpha\frac{u_{t}}{u}+\alpha a\log u
\leq&
C\alpha^{2}\left(\frac{1}{R^2}+\frac{\sqrt{K}}{R}+\frac{\alpha^2}{\alpha-1}\frac{1}{R^2}+K\right)\\
&+n^{\frac{3}{2}}\alpha^{2}K+n|a|\alpha^{2}+\alpha\varphi.
\end{align*}
If $a>0$, then
\begin{align*}
\frac{|\nabla u|^2}{u^2}-\alpha\frac{u_{t}}{u}+\alpha a\log u
\leq&
C\alpha^{2}\left(\frac{1}{R^2}+\frac{\sqrt{K}}{R}+\frac{\alpha^2}{\alpha-1}\frac{1}{R^2}+a+K\right)\\
&+n^{\frac{3}{2}}\alpha^{2}K+\alpha\varphi.
\end{align*}
2. Hamilton type:
$$\alpha(t)=e^{2Kt},\quad \varphi(t)=\frac{n}{t}e^{4Kt},\quad \gamma(t)=te^{2Kt}.$$
If $a\leq 0$, then
\begin{align*}
\frac{|\nabla u|^2}{u^2}-\alpha\frac{u_{t}}{u}+\alpha a\log u
&\leq
C\alpha^{2}\left(\frac{1}{R^2}+\frac{\sqrt{K}}{R}+K\right)\\
&+\frac{C\alpha^4}{R^{2}te^{2Kt}}+\alpha\varphi
+n^{\frac{3}{2}}\alpha^{2}K+n|a|\alpha^{2}+\alpha\varphi.
\end{align*}
If $a>0$, then
\begin{align*}
\frac{|\nabla u|^2}{u^2}-\alpha\frac{u_{t}}{u}+\alpha a\log u
&\leq
C\alpha^{2}\left(\frac{1}{R^2}+\frac{\sqrt{K}}{R}+a+K\right)\\
&+\frac{C\alpha^4}{R^{2}te^{2Kt}}+n^{\frac{3}{2}}\alpha^{2}K+\alpha\varphi.
\end{align*}
3. Li-Xu type:
\begin{align*}&\alpha(t)=1+\frac{\sinh(Kt)\cosh(Kt)-Kt}{\sinh^{2}(Kt)},\quad \varphi(t)=2nK[1+\coth(Kt)],\\
& \gamma(t)=\tanh(Kt).
\end{align*}
If $a\leq 0$, then
\begin{align*}
\frac{|\nabla u|^2}{u^2}-\alpha\frac{u_{t}}{u}+\alpha a\log u
&\leq
C\alpha^{2}\left(\frac{1}{R^2}+\frac{\sqrt{K}}{R}+K\right)\\
&+\frac{C}{R^{2}\tanh(Kt)}+n^{\frac{3}{2}}\alpha^{2}K+n|a|\alpha^{2}+\alpha\varphi.
\end{align*}
If $a>0$, then
\begin{align*}
\frac{|\nabla u|^2}{u^2}-\alpha\frac{u_{t}}{u}+\alpha a\log u
\leq&
C\alpha^{2}\left(\frac{1}{R^2}+\frac{\sqrt{K}}{R}+a+K\right)\\
&+\frac{C}{R^{2}\tanh(Kt)}+n^{\frac{3}{2}}\alpha^{2}K+\alpha\varphi.
\end{align*}
4. Linear Li-Xu type:
$$\alpha(t)=1+2Kt, \varphi(t)=\frac{n}{t}+nK(1+2Kt+\mu Kt),\gamma(t)=Kt \quad with \quad \mu\geq \frac{1}{4}.$$
If $a\leq 0$, then
\begin{align*}
\frac{|\nabla u|^2}{u^2}-\alpha\frac{u_{t}}{u}+\alpha a\log u
&\leq
C\alpha^{2}\left(\frac{1}{R^2}+\frac{\sqrt{K}}{R}+K\right)\\
&+\frac{C\alpha^4}{R^{2}Kt}+n^{\frac{3}{2}}\alpha^{2}K+n|a|\alpha^{2}+\alpha\varphi.
\end{align*}
If $a>0$, then
\begin{align*}
\frac{|\nabla u|^2}{u^2}-\alpha\frac{u_{t}}{u}+\alpha a\log u
&\leq
C\alpha^{2}\left(\frac{1}{R^2}+\frac{\sqrt{K}}{R}+a+K\right)\\
&+\frac{C\alpha^4}{R^{2}Kt}+n^{\frac{3}{2}}\alpha^{2}K+\alpha\varphi.
\end{align*}
\end{cor}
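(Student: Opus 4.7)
The plan is to derive Corollary 3.2 as a direct specialization of Theorem 3.1, so the real work is verifying that each of the four listed triples $(\alpha(t),\varphi(t),\gamma(t))$ meets conditions (C1)--(C4), together with the auxiliary control $\frac{\gamma\alpha^{4}}{\alpha-1}\leq C_{1}$ or $\frac{\gamma}{\alpha-1}\leq C_{2}$. Once (C1)--(C4) and one of these two bounds are in hand, the inequality claimed in each subcase of Corollary 3.2 is obtained by substituting the specific functions into the estimates of Theorem 3.1 and absorbing the explicit factors of $\alpha$, $\varphi$, $\gamma$ into the constant $C$. The split between $a\leq 0$ and $a>0$ is the same split that drives the two branches of Theorem 3.1.

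First I would treat the Li--Yau type ($\alpha$ constant, $\varphi=\frac{\alpha n}{t}+\frac{nK\alpha^{2}}{\alpha-1}$, $\gamma=t^{\theta}$ with $0<\theta\leq 2$). Here $\alpha'=0$, so (C2) reduces to $\frac{2\varphi}{n}\geq 2\alpha K$ and $\frac{\varphi^{2}}{n}+\alpha\varphi'\geq 0$; the first is immediate from the definition of $\varphi$, and the second follows because $\varphi'=-\frac{\alpha n}{t^{2}}$ and $\varphi^{2}/n$ dominates $|\alpha\varphi'|$ for $\theta\leq 2$. Condition (C3) becomes $\frac{\theta}{t}\leq \frac{2\varphi}{n\alpha}$, which is true since $\frac{2\varphi}{n\alpha}\geq \frac{2}{t}$. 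The control $\frac{\gamma}{\alpha-1}\leq C_{2}$ holds on $[0,T]$ since $\alpha$ is a constant strictly greater than $1$.

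Next I would handle the Hamilton and Linear Li--Xu types together, because in both cases $\alpha$ is non-decreasing and $\gamma$ vanishes at $t=0$: for the Hamilton choice $\alpha=e^{2Kt}$, $\varphi=\frac{n}{t}e^{4Kt}$, $\gamma=te^{2Kt}$, direct differentiation gives $\alpha'=2Ke^{2Kt}$, $\varphi'=-\frac{n}{t^{2}}e^{4Kt}+\frac{4nK}{t}e^{4Kt}$, and $\gamma'=e^{2Kt}+2Kte^{2Kt}$, which one checks satisfy (C2) and (C3). For the Linear Li--Xu choice the verification is a polynomial computation and the parameter $\mu\geq 1/4$ is chosen precisely so that $\varphi^{2}/n+\alpha\varphi'\geq 0$. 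In both cases $\frac{\gamma\alpha^{4}}{\alpha-1}$ is bounded on $[0,T]$, so Theorem 3.1's first branch applies and yields the stated inequalities after noting that $\frac{Cn^{2}\alpha^{4}}{R^{2}\gamma}$ matches the displayed $\frac{C\alpha^{4}}{R^{2}te^{2Kt}}$ or $\frac{C\alpha^{4}}{R^{2}Kt}$ term up to the absolute constant.

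The hyperbolic Li--Xu case will be the main obstacle, as is standard: one must verify that
\[
\alpha(t)=1+\frac{\sinh(Kt)\cosh(Kt)-Kt}{\sinh^{2}(Kt)},\quad \varphi(t)=2nK[1+\coth(Kt)],\quad \gamma(t)=\tanh(Kt)
\]
satisfy (C2) as an identity rather than a strict inequality; this is the content of Li--Xu's original argument. The algebra boils down to showing $\frac{2\varphi}{n}-\alpha'=\varphi^{2}/n^{2}\cdot(\ldots)$ and $\frac{\varphi^{2}}{n}+\alpha\varphi'\geq 0$ using $\mathrm{csch}^{2}$ and $\coth$ identities. Once this is confirmed, one notes that $\alpha(t)$ is bounded uniformly on any $[0,T]$, so condition (C4) holds in its second form, and $\frac{\gamma}{\alpha-1}$ is bounded, placing us in the second branch of Theorem 3.1. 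Substituting into Theorem 3.1 and absorbing the bounded factor $\alpha^{2}$ into $C$ then gives the displayed bound with the $\frac{C}{R^{2}\tanh(Kt)}$ term. The detailed algebra for all four cases is relegated to Section 6 of the paper, so the proof itself is simply the remark that these verifications have been performed and Theorem 3.1 applies.
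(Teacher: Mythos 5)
Your proposal follows the paper's own route exactly: Corollary 3.2 is obtained by specializing Theorem 3.1, and the only real content is the verification (which the paper carries out in Section 6) that each of the four triples $(\alpha,\varphi,\gamma)$ satisfies (C1)--(C4) together with $\frac{\gamma\alpha^{4}}{\alpha-1}\leq C$ or $\frac{\gamma}{\alpha-1}\leq C$. One small slip worth noting: for constant $\alpha$ the first line of (C2) reduces to $\frac{2\varphi}{n}\cdot\frac{\alpha-1}{\alpha}\geq 2\alpha K$, i.e.\ $\varphi\geq\frac{nK\alpha^{2}}{\alpha-1}$ (which is precisely why that term is built into $\varphi$), not to $\frac{2\varphi}{n}\geq 2\alpha K$ as you wrote --- but your conclusion is unaffected.
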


The local estimates above imply global estimates.
\begin{cor}
Let $(M^{n}, g(0))$ be a complete noncompact Riemannian manifold without
boundary, and asuume $g(t)$ evolves by Ricci flow in such a way that $|\mathrm{Ric}|\leq K$ for $t\in[0,T]$.
Let $u(x,t)$ be a positive solution to the equation ~\eqref{1.6}.
If $l\in \mathbb{R}$ and for $(x,t)\in M^{n}\times (0,T]$, then
\begin{align*}
\nonumber
\frac{|\nabla u|^2}{u^2}-\alpha\frac{u_{t}}{u}+\alpha a\log u
\leq&
C\alpha^{2}(K+|a|)+\alpha\varphi.
\end{align*}
\end{cor}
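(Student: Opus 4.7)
The plan is to derive the global estimate as a direct consequence of the local estimate in Theorem~3.1 by letting $R\to\infty$. The hypothesis of Corollary~3.3 provides the bound $|\mathrm{Ric}|\le K$ on all of $M^{n}\times[0,T]$, so the ball $B_{2R,T}$ can be taken with $R$ arbitrarily large, and the quantities involving $R$ in the local estimate will vanish in the limit.

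First I would fix an arbitrary point $(x,t)\in M^{n}\times(0,T]$. For each $R>0$ sufficiently large that $(x,t)\in B_{R,T}$, Theorem~3.1 (applied on the cube $B_{2R,T}$) gives the local bound for both cases $a\le 0$ and $a>0$. Since the global curvature condition $|\mathrm{Ric}|\le K$ on $M^{n}\times[0,T]$ guarantees that the hypothesis of Theorem~3.1 holds on every such ball, and since either of the two alternatives on $\gamma$ can be arranged (one may, for instance, pick one of the model choices in Corollary~3.2 satisfying (C1)--(C4)), the local estimate is legitimate for each $R$.

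Next I would let $R\to\infty$. The terms $C\alpha^{2}/R^{2}$, $C\alpha^{2}\sqrt{K}/R$ and $Cn^{2}/(R^{2}\gamma)$ (or $Cn^{2}\alpha^{4}/(R^{2}\gamma)$) all tend to zero, leaving
\[
\frac{|\nabla u|^{2}}{u^{2}}-\alpha\frac{u_{t}}{u}+\alpha a\log u \le C\alpha^{2}K+n^{3/2}\alpha^{2}K+n|a|\alpha^{2}+\alpha\varphi
\]
in the case $a\le 0$, and
\[
\frac{|\nabla u|^{2}}{u^{2}}-\alpha\frac{u_{t}}{u}+\alpha a\log u \le C\alpha^{2}(a+K)+n^{3/2}\alpha^{2}K+\alpha\varphi
\]
in the case $a>0$. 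In both cases the dimensional factors $n$ and $n^{3/2}$ can be absorbed into a single constant $C$, and the linear $|a|$-contribution in each case combines to $C\alpha^{2}|a|$. Together, these yield the claimed global estimate
\[
\frac{|\nabla u|^{2}}{u^{2}}-\alpha\frac{u_{t}}{u}+\alpha a\log u \le C\alpha^{2}(K+|a|)+\alpha\varphi.
\]

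There is no substantive obstacle here, since the argument is a standard passage from local to global; the only care needed is to verify that the two $R$-independent terms ($\frac{C\alpha^{2}\sqrt{K}}{R}$ at the boundary of being $O(1)$ in $R$, and the coefficient in front of $1/(R^{2}\gamma)$) indeed decay, which they do because $\gamma(t)$ is a fixed positive function of $t$ alone and $\alpha(t)$ is uniformly controlled on $[0,T]$ by (C1)--(C4). Thus the estimate follows for every $(x,t)\in M^{n}\times(0,T]$, completing the proof.
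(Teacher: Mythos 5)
Your proposal is correct and is essentially the argument the paper intends: Corollary 3.3 is obtained from the local estimate of Theorem 3.1 by fixing $(x,t)$, applying the theorem on $B_{2R,T}$ for all large $R$ (legitimate since $|\mathrm{Ric}|\leq K$ holds globally), and letting $R\to\infty$ so that all terms carrying $1/R$ or $1/(R^{2}\gamma)$ vanish, after which the dimensional constants and the $a$-dependent terms in both cases $a\le 0$ and $a>0$ are absorbed into $C\alpha^{2}(K+|a|)$. The paper itself gives no further detail beyond ``the local estimates above imply global estimates,'' so your write-up matches its route.
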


\begin{rem}
The above results may be regard as generalizing the gradient
estimates of  Yang \cite{30} to the Ricci flow.
\end{rem}
\subsection{\textbf{Auxiliary lemma}}
\

To prove the theorem $3.1$, the following a lemma is needed.

Let $f=\log u$. Then
\begin{equation}\label{3.3}
(\Delta-\partial_{t})f=-|\nabla f|^{2}+af.
\end{equation}
Let $F=|\nabla f|^{2}-\alpha f_{t}+\alpha af-\alpha\varphi$, where $\alpha=\alpha(t)$ and $\varphi=\varphi(t)$.
Then
\begin{eqnarray}\label{3.4}
\nonumber
\Delta f&=& f_{t}-af-|\nabla f|^{2}\\
&=&-\frac{F}{\alpha}-(\frac{\alpha-1}{\alpha})|\nabla f|^{2}-\varphi.
\end{eqnarray}

\begin{lem}
We assume that $\alpha(t)>1$ and $\varphi(t)>0$ satisfy the following system
(2.3). Then
\begin{eqnarray}\label{3.5}
\nonumber
(\Delta-\partial_{t}) F&\geq &|f_{ij}+\frac{\varphi}{n}\delta_{ij}|^{2}+(\frac{2\varphi}{n}-\alpha')\frac{1}{\alpha}F
-\alpha^{2}n^{2}K^{2}-2\nabla f\nabla F\\
&&+2a(\alpha-1)|\nabla f|^{2}+a\alpha\Delta f.
\end{eqnarray}
\end{lem}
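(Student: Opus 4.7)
The strategy is to parallel the proof of Lemma 2.1 term-by-term, with the source $hu^{l-1}$ replaced by $af$. Since $a$ is constant, $\nabla(af)=a\nabla f$ and $\Delta(af)=a\Delta f$, so none of the $|\nabla h|^{2}$, $\Delta h$, or $(l-1)$-power contributions appear and the computation is strictly shorter. I would begin by computing $\Delta F$ using Bochner's formula $\Delta|\nabla f|^{2}=2|f_{ij}|^{2}+2\nabla f\cdot\nabla\Delta f+2R_{ij}f_{i}f_{j}$ together with the Ricci-flow commutator $\Delta f_{t}=(\Delta f)_{t}-2R_{ij}f_{ij}$. Applying Young's inequality to the cross term $2\alpha R_{ij}f_{ij}$ against $\alpha^{2}|R_{ij}|^{2}\leq \alpha^{2}n^{2}K^{2}$ produces
\begin{equation*}
\Delta F\;\geq\;|f_{ij}|^{2}-\alpha^{2}n^{2}K^{2}+2\nabla f\cdot\nabla\Delta f+2R_{ij}f_{i}f_{j}-\alpha(\Delta f)_{t}+\alpha a\Delta f.
\end{equation*}

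Next I would compute $\partial_{t}F$ using the Ricci-flow identity $\partial_{t}|\nabla f|^{2}=2\nabla f\cdot\nabla f_{t}+2R_{ij}f_{i}f_{j}$, then subtract. The term $\alpha f_{tt}$ is eliminated via $f_{t}=\Delta f+|\nabla f|^{2}+af$ (which follows from $u_{t}=\Delta u+au\log u$ and $f=\log u$), so that $-\alpha(\Delta f)_{t}+\alpha f_{tt}=2\alpha\nabla f\cdot\nabla f_{t}+2\alpha R_{ij}f_{i}f_{j}+\alpha af_{t}$ and the two $\alpha af_{t}$ contributions cancel. I would then collapse the mixed derivative block $2\nabla f\cdot\nabla\Delta f+2(\alpha-1)\nabla f\cdot\nabla f_{t}$ into $-2\nabla f\cdot\nabla F+2a(\alpha-1)|\nabla f|^{2}$ by combining $\nabla F=\nabla|\nabla f|^{2}-\alpha\nabla f_{t}+\alpha a\nabla f$ with $\nabla f_{t}=\nabla\Delta f+\nabla|\nabla f|^{2}+a\nabla f$. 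Finally the leftover $\alpha'f_{t}-\alpha'af$ reduces to $\alpha'\Delta f+\alpha'|\nabla f|^{2}$ via the same equation for $f_{t}$.

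The last step is to complete the square by writing $|f_{ij}|^{2}=|f_{ij}+\tfrac{\varphi}{n}\delta_{ij}|^{2}-\tfrac{2\varphi}{n}\Delta f-\tfrac{\varphi^{2}}{n}$ and bounding the Ricci term below by $2\alpha R_{ij}f_{i}f_{j}\geq -2\alpha K|\nabla f|^{2}$. Substituting $\Delta f=-F/\alpha-(\alpha-1)|\nabla f|^{2}/\alpha-\varphi$ from (3.4) into the grouped coefficient $(\alpha'-\tfrac{2\varphi}{n})\Delta f$ produces exactly the desired $(\tfrac{2\varphi}{n}-\alpha')\tfrac{1}{\alpha}F$ together with extra $|\nabla f|^{2}$ and scalar pieces. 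The net coefficient of $|\nabla f|^{2}$ simplifies to $\tfrac{2\varphi(\alpha-1)}{n\alpha}+\tfrac{\alpha'}{\alpha}-2\alpha K$, which is nonnegative by the first inequality of (2.3), while the scalar remainder collapses to $\tfrac{\varphi^{2}}{n}+\alpha\varphi'$, which is nonnegative by the third inequality of (2.3). What remains on the right-hand side is precisely $|f_{ij}+\tfrac{\varphi}{n}\delta_{ij}|^{2}+(\tfrac{2\varphi}{n}-\alpha')\tfrac{1}{\alpha}F-\alpha^{2}n^{2}K^{2}-2\nabla f\cdot\nabla F+2a(\alpha-1)|\nabla f|^{2}+a\alpha\Delta f$, which is (3.5). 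The only real obstacle is the careful bookkeeping of the many cross terms so that the $|\nabla f|^{2}$ coefficient and the scalar remainder land on exactly the two conditions of (2.3) that the hypothesis supplies; no new analytic input is needed beyond the ingredients already used in Lemma 2.1.
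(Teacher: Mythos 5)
Your proposal is correct and follows essentially the same route as the paper: Bochner's formula plus the Ricci-flow commutator $\Delta f_t=(\Delta f)_t-2R_{ij}f_{ij}$, Young's inequality against $\alpha^2n^2K^2$, the evolution identity for $|\nabla f|^2$, completion of the square with $\frac{\varphi}{n}\delta_{ij}$, and finally the first and third inequalities of (2.3) to discard the nonnegative $|\nabla f|^2$ and scalar remainders. The only (immaterial) difference is that you eliminate the leftover terms by substituting the expression (3.4) for $\Delta f$, whereas the paper substitutes $f_t$ in terms of $F$; the two bookkeepings are algebraically identical.
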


\begin{proof}[\textbf{Proof}]
A computation is shown that
\begin{eqnarray}\label{3.6}
\nonumber
\Delta F&=&\Delta|\nabla f|^{2}-\alpha\Delta(f_{t})+\alpha a\Delta f\\
\nonumber
&=&2|f_{ij}|^{2}+2f_{j}f_{iij}+2R_{ij}f_{i}f_{j}-\alpha\Delta(f_t)+\alpha a\Delta f\\
\nonumber
&=&2\Big(|f_{ij}|^{2}+\alpha R_{ij}f_{ij}\Big)+2f_{j}f_{iij}+2R_{ij}f_{i}f_{j}
-\alpha(\Delta f)_{t}+\alpha a \Delta f\\
\nonumber
&\geq& 2|f_{ij}|^{2}-2\sum \alpha |R_{ij}||f_{ij}|+2f_{j}f_{iij}+2R_{ij}f_{i}f_{j}
-\alpha(\Delta f)_{t}+\alpha a \Delta f\\
\nonumber
&\geq&2|f_{ij}|^{2}-\sum (\alpha^{2} |R_{ij}|^{2}+|f_{ij}|^{2})+2f_{j}f_{iij}+2R_{ij}f_{i}f_{j}
-\alpha(\Delta f)_{t}+\alpha a \Delta f\\
\nonumber
&\geq&|f_{ij}|^{2}-\sum\alpha^{2} |R_{ij}|^{2}+2f_{j}f_{iij}+2R_{ij}f_{i}f_{j}
-\alpha(\Delta f)_{t}+\alpha a \Delta f\\
&\geq&|f_{ij}|^{2}-\alpha^{2}n^{2}K^{2}+2f_{j}f_{iij}+2R_{ij}f_{i}f_{j}
-\alpha(\Delta f)_{t}+\alpha a \Delta f.
\end{eqnarray}
and
\begin{eqnarray}\label{3.7}
\nonumber
\partial_{t} F&=&(|\nabla f|^{2})_{t}-\alpha f_{tt}-\alpha' f_{t}+\alpha' af+\alpha af_{t}-\alpha \varphi'-\alpha'\varphi\\
\nonumber
&=&2\nabla f\nabla(f_t)+2R_{ij} f_{i}f_{j}-\alpha f_{tt}-\alpha'f_{t}+\alpha'a f\\
&&+\alpha af_{t}-\alpha \varphi'-\alpha'\varphi.
\end{eqnarray}
We follow that from ~\eqref{3.6} and ~\eqref{3.7}
\begin{eqnarray}\label{3.8}
\nonumber
(\Delta-\partial_{t}) F&\geq &|f_{ij}|^{2}-\alpha^{2}n^{2}K^{2}+2\nabla f\nabla(\Delta f)
-\alpha(\Delta f)_{t}+\alpha a \Delta d-2\nabla f\nabla(f_t)\\
\nonumber
&&+\alpha f_{tt}+\alpha'f_{t}-\alpha' af-\alpha af_{t}+\alpha \varphi'+\alpha'\varphi\\
\nonumber
&=&|f_{ij}|^{2}-\alpha^{2}n^{2}K^{2}+2\nabla f\nabla(\Delta f)
-\alpha(f_{t}-|\nabla f|^{2}-af)_{t}\\
\nonumber
&&+\alpha a \Delta f-2\nabla f\nabla(f_t)+\alpha f_{tt}+\alpha'f_{t}\\
\nonumber
&&-\alpha' af-\alpha c f_{t}+\alpha \varphi'+\alpha'\varphi\\
\nonumber
&=&|f_{ij}|^{2}-\alpha^{2}n^{2}K^{2}+2\nabla f\nabla(\Delta f)
+\alpha(|\nabla f|^{2})_{t}+\alpha a \Delta f\\
\nonumber
&&-2\nabla f\nabla(f_t)+\alpha'f_{t}-\alpha'af+\alpha \varphi'+\alpha'\varphi\\
\nonumber
&=&|f_{ij}|^{2}-\alpha^{2}n^{2}K^{2}+2\nabla f\nabla(\Delta f)
+2\alpha\nabla f\nabla(f_t)+2\alpha R_{ij}f_{i}f_{j}\\
\nonumber
&&+\alpha a \Delta f-2\nabla f\nabla(f_t)+\alpha'f_{t}-\alpha' af+\alpha \varphi'+\alpha'\varphi\\
\nonumber
&=&|f_{ij}|^{2}+2\alpha R_{ij}f_{i}f_{j}-\alpha^{2}n^{2}K^{2}+2\nabla f\nabla F
+2a(\alpha-1)(l-1)|\nabla f|^{2}\\
&&+\alpha a \Delta f+\alpha'f_{t}-\alpha' a f+\alpha \varphi'+\alpha'\varphi.
\end{eqnarray}
Further, by utilizing the unit matrix $(\delta_{ij})_{n\times n}$ and $(3.8)$, we obtain
\begin{eqnarray}\label{3.9}
\nonumber
(\Delta-\partial_{t}) F&\geq &|f_{ij}+\frac{\varphi}{n}\delta_{ij}|^{2}-2\alpha K|\nabla f|^{2}-\alpha^{2}n^{2}K^{2}+2\nabla f\nabla F
\\
\nonumber
&&+2a(\alpha-1)|\nabla f|^{2}+a\alpha \Delta f+\alpha'f_{t}-\alpha' af+\alpha \varphi'+\alpha'\varphi\\
\nonumber
&&-\frac{\varphi^2}{n}-2\frac{\varphi}{n}\Delta f\\
\nonumber
&= &|f_{ij}+\frac{\varphi}{n}\delta_{ij}|^{2}+(\frac{2\varphi}{n}-2\alpha K)|\nabla f|^{2}-(\frac{2\varphi}{n}-\alpha')f_{t}\\
\nonumber
&&+(\frac{2\varphi}{n}-\alpha')cu^{l-1}-\alpha^{2}n^{2}K^{2}-2\nabla f\nabla F+2a(\alpha-1)|\nabla f|^{2}\\
&&
+a\alpha\Delta f+\alpha \varphi'+\alpha'\varphi-\frac{\varphi^2}{n}+(\frac{2\varphi}{n}-\alpha')\frac{\alpha\varphi}{\alpha}.
\end{eqnarray}

\end{proof}

\subsection{\textbf{The proof of Theorem}}
\

In this section, we will prove  Theorem $3.1$.
\begin{proof}[\textbf{Proof of Theorem $3.1$}]
Let $G=\gamma(t)F$ and $\gamma(t)>0$ be non-decreasing. Then
\begin{eqnarray}\label{3.10}
\nonumber
(\Delta-\partial_{t}) G&=&\gamma(\Delta-\partial_{t})F-\gamma'F\\
\nonumber
&\geq &\gamma|f_{ij}+\frac{\varphi}{n}g_{ij}|^{2}+(\frac{2\varphi}{n}-\alpha')\frac{1}{\alpha}G
-\gamma\alpha^{2}n^{2}K^{2}-2\nabla f\nabla G\\
\nonumber
&&+2a\gamma(\alpha-1)|\nabla f|^{2}+a\gamma\alpha\Delta f-\gamma' F\\
\nonumber
&=&\gamma|f_{ij}+\frac{\varphi}{n}g_{ij}|^{2}+\Big[(\frac{2\varphi}{n}-\alpha')\frac{1}{\alpha}-\frac{\gamma'}{\gamma}\big]G
-\gamma\alpha^{2}n^{2}K^{2}\\
&&-2\nabla f\nabla G+2a\gamma(\alpha-1)|\nabla f|^{2}+a\gamma\alpha\Delta f.
\end{eqnarray}

Now, let $\varphi(r)$ be a $C^2$ function on $[0,\infty)$ such that
\begin{equation*}
\varphi(r)=\left\{\aligned
1 \quad if ~r\in[0,1],\\
0 \quad if ~r\in [2,\infty),
\endaligned\right.
\end{equation*}
and
$$0\leq \varphi(r)\leq 1, \quad \varphi'(r)\leq 0,\quad \varphi''(r)\leq 0, \quad
\frac{|\varphi'(r)|}{\varphi(r)}\leq C,$$
where $C$ is an absolute constant.
Define by
$$\phi(x,t)=\varphi(d(x,x_{0},t))=\varphi\left(\frac{d(x,x_{0},t)}{R}\right)
=\varphi\left(\frac{\rho(x,t)}{R}\right),$$
where $\rho(x,t)=d(x,x_{0},t)$.
By using  maximum principle, the argument of Calabi [2] allows us to
suppose that the function $\phi(x,t)$ with support in $B_{2R,T}$, is $C^2$ at the maximum point.
By utilizing the Laplacian theorem, we deduce that
\begin{eqnarray}\label{3.11}
\frac{|\nabla \phi|^{2}}{\phi}\leq \frac{C}{R^2},\quad
-\Delta \phi\leq \frac{C}{R^2}(1+\sqrt{K}R),
\end{eqnarray}

For any $0\leq T_{1}\leq T$, let $H=\phi G$ and $(x_{1},t_{1})$ be the point in $B_{2R,T_1}$
at which $H$ attains its maximum value. We can suppose that
$H$ is positive, because otherwise the proof is trivial.
Then at the point $(x_{1}, t_{1})$, we infer
\begin{equation}\label{3.12}
\left.\begin{array}{rcl} 0=\nabla (\phi G)=G\nabla \phi +\phi\nabla
G,\vspace{2mm}
\\
\Delta(\phi G)\leq 0,\vspace{2mm}
\\
 \partial_t (\phi G)\geq 0.\end{array}\right\}
\end{equation}
By the evolution formula of the geodesic length under the Ricci flow [6], we calculate
\begin{align*}
\phi_{t}G=&-G\phi'\left(\frac{\rho}{R}\right)\frac{1}{R}\frac{d\rho}{dt}
=G\phi'\left(\frac{\rho}{R}\right)\int_{\gamma_{t_1}}\mathrm{Ric}(S,S)ds\\
\leq &G\phi'\left(\frac{\rho}{R}\right)\frac{1}{R}K\rho\leq G\phi'\left(\frac{\rho}{R}\right)K_{2}\leq G\sqrt{C}K,
\end{align*}
where $\gamma_{t_1}$ is the geodesic connecting $x$ and $x_0$ under the metric
$g(t_1)$, $S$ is the unite tangent vector to $\gamma_{t_1}$, and $ds$ is the
element of the arc length.

All the following computations are at the point $(x_{1}, t_{1})$.
Since
\begin{eqnarray}\label{3.13}
\nonumber
|f_{ij}+\frac{\varphi}{n}\delta_{ij}|^{2}&\geq&\frac{1}{n}\Big(\mathbf{tr}|f_{ij}+\frac{\varphi}{n}\delta_{ij}| \Big)^{2}\\
\nonumber
&=&\frac{1}{n}\Big(\Delta f +\varphi\Big)\\
\nonumber
&=&\frac{1}{n}\Big[-\frac{1}{\alpha}F-\frac{1}{\alpha}(\alpha-1)|\nabla f|^{2}\Big]^{2}\\
&=&\frac{1}{\alpha^{2}n}\Big[\frac{G}{\gamma}+(\alpha-1)|\nabla f|^{2}\Big]^{2}.
\end{eqnarray}
and
\begin{eqnarray}\label{3.14}
\nonumber
\Delta f&=&f_{t}-|\nabla f|^{2}-af\\
&=&-\frac{F}{\alpha}-\frac{\alpha-1}{\alpha}|\nabla f|^{2}-\varphi<0.
\end{eqnarray}
\textbf{Case 1}~~$a\leq 0$.\quad Combining ~\eqref{3.14} with ~\eqref{3.10}, we have
\begin{eqnarray*}
\nonumber
(\Delta-\partial_{t}) G
&\geq &\gamma|f_{ij}+\frac{\varphi}{n}\delta_{ij}|^{2}+\Big[(\frac{2\varphi}{n}-\alpha')\frac{1}{\alpha}-\frac{\gamma'}{\gamma}\big]G
-\gamma\alpha^{2}n^{2}K^{2}\\
&&-2\nabla f\nabla G+2a\gamma(\alpha-1)|\nabla f|^{2}.
\end{eqnarray*}
Using ~\eqref{3.12} and ~\eqref{3.13}, we infer
\begin{eqnarray}\label{3.15}
\nonumber
0&\geq&(\Delta-\partial_{t}) (\phi G)\\
\nonumber
&=&G\Big(\Delta\phi-2\frac{|\nabla\phi|^2}{\phi}\Big)+\phi(\Delta-\partial_{t})G-\gamma G\phi_{t}\\
\nonumber
&\geq&G\Big(\Delta\phi-2\frac{|\nabla\phi|^2}{\phi}\Big)+\frac{\phi\gamma}{\alpha^{2}n}\Big[\frac{G}{\gamma}+(\alpha-1)|\nabla f|^{2}\Big]^{2}\\
\nonumber
&&+\Big[(\frac{2\varphi}{2}-\alpha')\frac{1}{\alpha}-\frac{\gamma'}{\gamma}\Big]\phi G-\gamma\phi\alpha^{2}n^{2}K^{2}-2\phi\nabla f\nabla G\\
&&+2a\phi\gamma(\alpha-1)|\nabla f|^{2}-G\sqrt{C}K.
\end{eqnarray}
Multiply $\phi$ to $(3.15)$, we have
\begin{eqnarray}\label{3.16}
\nonumber
0&\geq&\phi G\Big[\Delta\phi-2\frac{|\nabla\phi|^2}{\phi}+(\frac{2\varphi}{n}-\alpha')\frac{1}{\alpha}-\frac{\gamma'}{\gamma}\Big]
+\frac{\phi^{2}\gamma}{\alpha^{2}n}\Big[\frac{G}{\gamma}+(\alpha-1)|\nabla f|^{2}\Big]^{2}\\
\nonumber
&&-\gamma\phi^{2}\alpha^{2}n^{2}K^{2}-2\phi^{2}\nabla f\nabla G+2a\phi^{2}\gamma(\alpha-1)|\nabla f|^{2}
-\phi G\sqrt{C}K\\
\nonumber
&\geq&\phi G\Big[\Delta\phi-2\frac{|\nabla\phi|^2}{\phi}+(\frac{2\varphi}{n}-\alpha')\frac{1}{\alpha}-\frac{\gamma'}{\gamma}\Big]
+\frac{\phi^{2}G^{2}}{\alpha^{2}n\gamma}+\frac{\phi^{2}(\alpha-1)^{2}\gamma}{n\alpha^{2}}|\nabla f|^{4}\\
\nonumber
&&
+\frac{2\phi^{2}(\alpha-1)}{n\alpha^{2}}G|\nabla f|^{2}-\gamma\phi^{2}\alpha^{2}n^{2}K^{2}+2\phi G\nabla\phi\nabla f\\
&&+2a\phi^{2}\gamma(\alpha-1)|\nabla f|^{2}-\phi G\sqrt{C}K.
\end{eqnarray}
We use the fact
\begin{equation*}
\frac{2\phi^{2}(\alpha-1)}{n\alpha^{2}}G|\nabla f|^{2}+2\phi G\nabla\phi\nabla f
\geq -\frac{n\alpha^{2}}{2(\alpha-1)}\frac{|\nabla\phi|^{2}}{\phi}\phi G,
\end{equation*}
and
\begin{equation*}
\frac{\phi^{2}(\alpha-1)^{2}\gamma}{n\alpha^{2}}|\nabla f|^{4}+2a\phi^{2}\gamma(\alpha-1)|\nabla f|^{2}
\geq -na^{2}\alpha^{2}\gamma\phi^{2},
\end{equation*}
to ~\eqref{3.16}, we deduce that
\begin{eqnarray*}
\nonumber
0&\geq&\phi G\Big[\Delta\phi-2\frac{|\nabla\phi|^2}{\phi}+(\frac{2\varphi}{n}-\alpha')\frac{1}{\alpha}-\frac{\gamma'}{\gamma}
-\frac{n\alpha^{2}}{2(\alpha-1)}\frac{|\nabla\phi|^{2}}{\phi}-\sqrt{C}K\Big]
\\
\nonumber
&&+\frac{\phi^{2}G^{2}}{\alpha^{2}n\gamma}-\gamma\phi^{2}\alpha^{2}n^{2}K^{2}-na^{2}\alpha^{2}\gamma\phi^{2}\\
\nonumber
&\geq&\left[-\frac{C}{R^2}(1+\sqrt{k}R)-\frac{2C}{R^2}
+(\frac{2\varphi}{n}-\alpha')\frac{1}{\alpha}-\frac{\gamma'}{\gamma}
-\frac{n\alpha^{2}}{2(\alpha-1)}\frac{C}{R^2}-\sqrt{C}K\right]\phi G\\
&&+\frac{\phi^{2}G^{2}}{\alpha^{2}n\gamma}-\gamma\phi^{2}\alpha^{2}n^{2}K^{2}-na^{2}\alpha^{2}\gamma\phi^{2}.
\end{eqnarray*}
For the inequality $Ax^{2}-2Bx\leq C$, one has $x\leq \frac{2B}{A}+\left(\frac{C}{A}\right)^{\frac{1}{2}}$, where $A, B, C>0$.
Hence, we infer
\begin{eqnarray*}
\nonumber
\phi G(x,T_{1})&\leq& (\phi G)(x_{1},t_1)\\
\nonumber
&\leq& \Big\{n\gamma\alpha^{2}\left[\frac{C}{R^2}(1+\sqrt{K}R)+\frac{n\alpha^{2}}{2(\alpha-1)}\frac{C}{R^2}+\sqrt{C}K\right]\\
&&+n\gamma\alpha^{2}\left[\frac{\gamma'}{\gamma}-(\frac{2\varphi}{n}-\frac{\alpha'}{\alpha})\frac{1}{\alpha}\right]\\
&&+ n^{\frac{3}{2}}\gamma\alpha^{2}\phi K+na\alpha^{2}\gamma\phi\Big\}(x_{1},t_1).
\end{eqnarray*}
If $\gamma$ is nondecreasing which satisfies the system
\begin{equation*}
\left\{\aligned
\frac{\gamma'}{\gamma}-(\frac{2\varphi}{n}-\alpha')\frac{1}{\alpha}\leq 0,\\
\frac{\gamma\alpha^4}{\alpha-1}\leq C.
\endaligned\right.
\end{equation*}
Recall that $\alpha(t)$ and $\gamma(t)$ are non-decreasing and $t_{1}<T_{1}$. Hence, we have
\begin{eqnarray*}
\nonumber
\phi G(x,T_{1})&\leq& (\phi G)(x_{1},t_1)\\
\nonumber
&\leq& n\gamma(T_{1})\alpha^{2}(T_{1})\left[\frac{C}{R^2}\Big(1+\sqrt{K}R\Big)+K\right]+\frac{n^{2}C}{R^2}\\
&&+ n^{\frac{3}{2}}\gamma(T_{1})\alpha^{2}(T_{1}) K+na\alpha^{2}(T_{1})\gamma(T_{1}).
\end{eqnarray*}
Hence, we have for  $\phi\equiv 1$ on $B_{R,T}$,
\begin{eqnarray*}
F(x,T_{1})&\leq&n\alpha^{2}(T_{1})\left[\frac{C}{R^2}\Big(1+\sqrt{K}R\Big)+CK\right]+\frac{n^{2}C}{R^{2}\gamma(T_{1})}\\
&&+ n^{\frac{3}{2}}\alpha^{2}(T_{1}) K+na\alpha^{2}(T_{1}).
\end{eqnarray*}

If $\gamma$ is nondecreasing which satisfies the system
\begin{equation*}
\left\{\aligned
\frac{\gamma'}{\gamma}-(\frac{2\varphi}{n}-\alpha')\frac{1}{\alpha}\leq 0,\\
\frac{\gamma}{\alpha-1}\leq C.
\endaligned\right.
\end{equation*}
Recall that $\alpha(t)$ and $\gamma(t)$ are non-decreasing and $t_{1}<T_{1}$. Hence, we have
\begin{eqnarray*}
\nonumber
\phi G(x,T_{1})&\leq& (\phi G)(x_{1},t_1)\\
\nonumber
&\leq& n\gamma(T_{1})\alpha^{2}(T_{1})\left[\frac{C}{R^2}\Big(1+\sqrt{K}R\Big)+\frac{Cn\alpha^{4}}{R^2}+CK\right]\\
&&+ n^{\frac{3}{2}}\gamma(T_{1})\alpha^{2}(T_{1}) K+na\alpha^{2}(T_{1})\gamma(T_{1}).
\end{eqnarray*}
Hence, we have for  $\phi\equiv 1$ on $B_{R,T}$,
\begin{eqnarray*}
F(x,T_{1})&\leq&n\alpha^{2}(T_{1})\left[\frac{C}{R^2}\Big(1+\sqrt{K}R\Big)+K\right]+\frac{n^{2}C\alpha^{4}}{R^{2}\gamma(T_{1})}\\
&&+ n^{\frac{3}{2}}\alpha^{2}(T_{1}) K+na\alpha^{2}(T_{1}).
\end{eqnarray*}
Because $T_{1}$ is arbitrary in $0<T_{1}<T$,  the conclusion is valid.

\textbf{Case 2}~~$a\geq 0$.\quad It is not difficult to find $\Delta f\leq -\frac{F}{\alpha}$ form ~\eqref{3.14}.
Then, we have from ~\eqref{3.10}
\begin{eqnarray*}
\nonumber
(\Delta-\partial_{t}) G
&\geq &\gamma|f_{ij}+\frac{\varphi}{n}\delta_{ij}|^{2}+\Big[(\frac{2\varphi}{n}-\alpha')\frac{1}{\alpha}-\frac{\gamma'}{\gamma}\big]G
\\
&&-\gamma\alpha^{2}n^{2}K^{2}2\nabla f\nabla G-aG.
\end{eqnarray*}
Using ~\eqref{3.13} and ~\eqref{3.13}, we infer
\begin{eqnarray*}
\nonumber
0&\geq&(\Delta-\partial_{t}) (\phi G)\\
\nonumber
&=&G\Big(\Delta\phi-2\frac{|\nabla\phi|^2}{\phi}\Big)+\phi(\Delta-\partial_{t})G-\gamma G\phi_{t}\\
\nonumber
&\geq&G\Big(\Delta\phi-2\frac{|\nabla\phi|^2}{\phi}\Big)+\frac{\phi\gamma}{\alpha^{2}n}\Big[\frac{G}{\gamma}+(\alpha-1)|\nabla f|^{2}\Big]^{2}\\
\nonumber
&&+\Big[(\frac{2\varphi}{2}-\alpha')\frac{1}{\alpha}-\frac{\gamma'}{\gamma}\Big]\phi G-\gamma\phi\alpha^{2}n^{2}K^{2}-2\phi\nabla f\nabla G\\
&&-a\phi G-G\sqrt{C}K.
\end{eqnarray*}
Multiply $\phi$, we have
\begin{eqnarray}\label{3.17}
\nonumber
0&\geq&\phi G\Big[\Delta\phi-2\frac{|\nabla\phi|^2}{\phi}+(\frac{2\varphi}{n}-\alpha')\frac{1}{\alpha}-\frac{\gamma'}{\gamma}\Big]
+\frac{\phi^{2}\gamma}{\alpha^{2}n}\Big[\frac{G}{\gamma}+(\alpha-1)|\nabla f|^{2}\Big]^{2}\\
\nonumber
&&-\gamma\phi^{2}\alpha^{2}n^{2}K^{2}-2\phi^{2}\nabla f\nabla G-a\phi^{2}G
-\phi G\sqrt{C}K\\
\nonumber
&\geq&\phi G\Big[\Delta\phi-2\frac{|\nabla\phi|^2}{\phi}+(\frac{2\varphi}{n}-\alpha')\frac{1}{\alpha}-\frac{\gamma'}{\gamma}\Big]
+\frac{\phi^{2}G^{2}}{\alpha^{2}n\gamma}+\frac{2\phi^{2}(\alpha-1)}{n\alpha^{2}}G|\nabla f|^{2}\\
&&-\gamma\phi^{2}\alpha^{2}n^{2}K^{2}+2\phi G\nabla\phi\nabla f-a\phi^{2}G-\phi G\sqrt{C}K,
\end{eqnarray}
where we drop the term $\frac{\phi^{2}(\alpha-1)^{2}\gamma}{n\alpha^{2}}|\nabla f|^{4}$.
We use the fact
\begin{equation*}
\frac{2\phi^{2}(\alpha-1)}{n\alpha^{2}}G|\nabla f|^{2}+2\phi G\nabla\phi\nabla f
\geq -\frac{n\alpha^{2}}{2(\alpha-1)}\frac{|\nabla\phi|^{2}}{\phi}\phi G,
\end{equation*}
to ~\eqref{3.17}, we deduce that
\begin{eqnarray*}
\nonumber
0&\geq&\phi G\Big[\Delta\phi-2\frac{|\nabla\phi|^2}{\phi}+(\frac{2\varphi}{n}-\alpha')\frac{1}{\alpha}-\frac{\gamma'}{\gamma}
-\frac{n\alpha^{2}}{2(\alpha-1)}\frac{|\nabla\phi|^{2}}{\phi}-a\phi-\sqrt{C}K\Big]
\\
\nonumber
&&+\frac{\phi^{2}G^{2}}{\alpha^{2}n\gamma}-\gamma\phi^{2}\alpha^{2}n^{2}K^{2}\\
\nonumber
&\geq&\left[-\frac{C}{R^2}(1+\sqrt{k}R)-\frac{2C}{R^2}
+(\frac{2\varphi}{n}-\alpha')\frac{1}{\alpha}-\frac{\gamma'}{\gamma}
-\frac{n\alpha^{2}}{2(\alpha-1)}\frac{C}{R^2}-\sqrt{C}K\right]\phi G\\
&&+\frac{\phi^{2}G^{2}}{\alpha^{2}n\gamma}-\gamma\phi^{2}\alpha^{2}n^{2}K^{2}.
\end{eqnarray*}
For the inequality $Ax^{2}-2Bx\leq C$, one has $x\leq \frac{2B}{A}+\left(\frac{C}{A}\right)^{\frac{1}{2}}$, where $A, B, C>0$.
\begin{eqnarray*}
\nonumber
\phi G(x,T_{1})&\leq& (\phi G)(x_{1},t_1)\\
\nonumber
&\leq& \Big\{n\gamma\alpha^{2}\left[\frac{C}{R^2}(1+\sqrt{K}R)+\frac{n\alpha^{2}}{2(\alpha-1)}\frac{C}{R^2}+a\phi+\sqrt{C}K\right]\\
&&+n\gamma\alpha^{2}\left[\frac{\gamma'}{\gamma}-(\frac{2\varphi}{n}-\frac{\alpha'}{\alpha})\frac{1}{\alpha}\right]
+ n^{\frac{3}{2}}\gamma\alpha^{2}\phi K\Big\}(x_{1},t_1).
\end{eqnarray*}
If $\gamma$ is nondecreasing which satisfies the system
\begin{equation*}
\left\{\aligned
\frac{\gamma'}{\gamma}-(\frac{2\varphi}{n}-\alpha')\frac{1}{\alpha}\leq 0,\\
\frac{\gamma\alpha^4}{\alpha-1}\leq C.
\endaligned\right.
\end{equation*}
Recall that $\alpha(t)$ and $\gamma(t)$ are non-decreasing and $t_{1}<T_{1}$. Hence, we have
\begin{eqnarray*}
\nonumber
\phi G(x,T_{1})&\leq& (\phi G)(x_{1},t_1)\\
\nonumber
&\leq& n\gamma(T_{1})\alpha^{2}(T_{1})\left[\frac{C}{R^2}\Big(1+\sqrt{K}R\Big)+a\phi+K\right]+\frac{n^{2}C}{R^2}\\
&&+ n^{\frac{3}{2}}\gamma(T_{1})\alpha^{2}(T_{1}) K.
\end{eqnarray*}
Hence, we have for  $\phi\equiv 1$ on $B_{R,T}$,
\begin{eqnarray*}
\sup_{B_R}F(x,T_{1})&\leq&n\alpha^{2}(T_{1})\left[\frac{C}{R^2}\Big(1+\sqrt{K}R\Big)+a+CK\right]+\frac{n^{2}C}{R^{2}\gamma(T_{1})}\\
&&+ n^{\frac{3}{2}}\alpha^{2}(T_{1}) K.
\end{eqnarray*}

If $\gamma$ is nondecreasing which satisfies the system
\begin{equation*}
\left\{\aligned
\frac{\gamma'}{\gamma}-(\frac{2\varphi}{n}-\alpha')\frac{1}{\alpha}\leq 0,\\
\frac{\gamma}{\alpha-1}\leq C.
\endaligned\right.
\end{equation*}
Recall that $\alpha(t)$ and $\gamma(t)$ are non-decreasing and $t_{1}<T_{1}$. Hence, we have
\begin{eqnarray*}
\nonumber
\phi G(x,T_{1})&\leq& (\phi G)(x_{1},t_1)\\
\nonumber
&\leq& n\gamma(T_{1})\alpha^{2}(T_{1})\left[\frac{C}{R^2}\Big(1+\sqrt{K}R\Big)+\frac{C n\alpha^{4}}{R^2}+a\phi+CK\right]\\
&&+ n^{\frac{3}{2}}\gamma(T_{1})\alpha^{2}(T_{1}) K.
\end{eqnarray*}
Hence, we have for  $\phi\equiv 1$ on $B_{R,T}$,
\begin{eqnarray*}
\nonumber
F(x,T_{1})&\leq&n\alpha^{2}(T_{1})\left[\frac{C}{R^2}\Big(1+\sqrt{K}R\Big)+a+K\right]+\frac{n^{2}C\alpha^4}{R^{2}\gamma(T_{1})}\\
&&+ n^{\frac{3}{2}}\alpha^{2}(T_{1}) K.
\end{eqnarray*}
Because $T_1$ is arbitrary in $0<T_{1}<T$,  the conclusion is valid. This proof is complete.

\end{proof}

\section{\textbf{Harnack Inequalities}}
In this section, as application of main theorems, some Harnack inequalities are derived.

\begin{thm}
Let $(M^{n}, g(x,t))_{t\in [0,T]}$ be a complete solution to the Ricci flow \eqref{1.7}. Suppose that
$|\mathrm{Ric}|\leq K$ for some $K>0$, and all $(x,t)\in M^{n}\times[0,T]$. Assume that
$u(x,t)$ is a positive solution for ~\eqref{1.6}.
Let $h(x,t)$ be a function defined
on $M^{n}\times[0,T]$ which is $C^1$ in $t$ and $C^2$ in $x$, satisfying $|\nabla h|^{2}\leq \delta_{2}h$ and
$\Delta h\geq -\delta_{3}$ on $M^{n}\times[0,T]$ for some positive constants $\delta_{2}$ and $\delta_{3}$.
Then for all $(x_{1},t_{1})\in M^{n}\times(0,T)$ and $(x_{2},t_{2})\in M^{n}\times(0,T)$
such that $t_{1}<t_{2}$, we have
\begin{align*}
u(x_{2},t_{2})&\leq \left\{\aligned
u(x_{1},t_{1})
\times\exp\big(\Gamma(t_{1},t_{2},\delta_{1},\delta_{2},\delta_{3},\overline{u}_{1})\big),\quad l\leq 1,\\
u(x_{1},t_{1})
\times\exp\big(\Lambda(t_{1},t_{2},\delta_{1},\delta_{2},\delta_{3},\overline{u}_{1})\big),\quad l> 1,
\endaligned\right.
\end{align*}
where
\begin{align*}
&\Gamma(t_{1},t_{2},\delta_{1},\delta_{2},\delta_{3},\overline{u}_{1})\\
&=
\int_{0}^{1}\frac{|\gamma'(s)|^4}{2(t_{2}-t_{1})^2}ds+\int_{t_{1}}^{t_{2}}\frac{\alpha^{2}(t)}{32}dt
+\int_{t_{1}}^{t_{2}}[\varphi+C\alpha^{2}(K+\mu_{1})+\delta_{1}\overline{u}_{1}]dt,\\
&\Lambda(t_{1},t_{2},\delta_{1},\delta_{2},\delta_{3},\overline{u}_{1})\\
&=
\int_{0}^{1}\frac{|\gamma'(s)|^4}{2(t_{2}-t_{1})^2}ds+\int_{t_{1}}^{t_{2}}\frac{\alpha^{2}(t)}{32}dt
+\int_{t_{1}}^{t_{2}}[\varphi+C\alpha^{2}(K+\mu_{2})]dt\\
&\qquad+\int_{t_{1}}^{t_{2}}\Big[\delta\sqrt{\frac{l\alpha-1}{l-1}}\sqrt{\overline{u}_{2}\delta_{2}}
+\alpha^{\frac{3}{2}}\sqrt{n(l-1)\varphi\delta_{1}}\Big]dt.
\end{align*}
\end{thm}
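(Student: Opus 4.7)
The plan is to derive the Harnack inequality by integrating the global gradient estimate of Corollary~2.4 along a spacetime curve joining $(x_1,t_1)$ to $(x_2,t_2)$, and then absorbing the resulting $|\nabla\log u|^2$ contribution via a Young-type inequality tuned to produce the coefficients $|\gamma'|^4/[2(t_2-t_1)^2]$ and $\alpha^2/32$ appearing in $\Gamma$ and $\Lambda$.

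First I would set $f = \log u$ and rearrange Corollary~2.4 into the pointwise differential inequality
\begin{equation*}
\partial_t f \;\ge\; \tfrac{1}{\alpha}|\nabla f|^2 + h\,u^{l-1} - \varphi - \tfrac{1}{\alpha}R(t),
\end{equation*}
where $R(t)$ bundles the explicit remainder terms from Corollary~2.4, with different expressions in the two cases $l\le 1$ and $l>1$. In particular, the nonlinear-source contributions $\delta_1\overline{u}_1$, $\sqrt{(l\alpha-1)\overline{u}_2\delta_2/(l-1)}$ and $\alpha^{3/2}\sqrt{n(l-1)\varphi\delta_1}$ that ultimately appear inside $\Gamma$ and $\Lambda$ originate from this rearrangement.

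Next, for any smooth curve $\gamma:[0,1]\to M^n$ joining $\gamma(0)=x_1$ and $\gamma(1)=x_2$, I would introduce the spacetime path $\eta(s)=(\gamma(s),\,t_1+s(t_2-t_1))$ and compute
\begin{equation*}
f(x_2,t_2)-f(x_1,t_1)=\int_0^1\bigl[\gamma'(s)\cdot\nabla f+(t_2-t_1)\,\partial_t f\bigr]\,ds.
\end{equation*}
Inserting the rearranged gradient estimate on the $\partial_t f$ slot and packaging the $|\nabla f|^2$ pieces into a single quadratic form $\gamma'(s)\cdot\nabla f+\tfrac{t_2-t_1}{\alpha}|\nabla f|^2$, to which the $s$-integrand is reduced up to the $R(t)$-type remainder. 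The goal of the next step is to erase this quadratic form against a purely geometric cost.

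Third, I would apply a two-stage Young inequality to that quadratic form. A first splitting of $\gamma'\cdot\nabla f$ against $\tfrac{t_2-t_1}{\alpha}|\nabla f|^2$ — with weight proportional to $|\gamma'|$ — cancels the $|\nabla f|^2$ term exactly and leaves a residual proportional to $|\gamma'|^4/(t_2-t_1)^2$ together with an $\alpha^2$ multiplier; a second Young splitting on the $\alpha^2$ piece, with constant $1/32$, produces the precise coefficient of the $\int\alpha^2/32\,dt$ term in $\Gamma$, $\Lambda$. After integration in $s$ (using $dt=(t_2-t_1)\,ds$ for the $\alpha$-integrals), the three surviving contributions are exactly the $|\gamma'|^4$ path cost, the $\alpha^2/32$ time cost, and the $\varphi+C\alpha^2(K+\mu_i)+\text{nonlinear}$ remainder cost of $\Gamma$, $\Lambda$. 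The case split $l\le 1$ vs $l>1$ is inherited directly from Corollary~2.4: in the $l\le 1$ case the nonlinear contribution collapses to $\delta_1\overline{u}_1$, while in the $l>1$ case a matching Young inequality applied to the $hu^{l-1}\nabla f$-type term in $R(t)$ produces the two radicals of $\Lambda$. Exponentiating the resulting bound on $f(x_2,t_2)-f(x_1,t_1)$ yields the stated exponential inequality.

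The main obstacle will be the Young-constant bookkeeping, particularly in the $l>1$ case: the Young weights must be chosen simultaneously to (i) absorb the $(t_2-t_1)|\nabla f|^2/\alpha$ term with no leftover, (ii) produce exactly the $|\gamma'|^4/[2(t_2-t_1)^2]$ and $\alpha^2/32$ structure, and (iii) generate the radicals $\sqrt{(l\alpha-1)\overline{u}_2\delta_2/(l-1)}$ and $\alpha^{3/2}\sqrt{n(l-1)\varphi\delta_1}$ with the precise coefficients of $\Lambda$. A secondary technical point is controlling the $s$-dependent metric distortion of $|\gamma'(s)|$ under the evolving metric $g(t)$; this is handled by the standard equivalence $e^{-2K(t-s)}g(s)\le g(t)\le e^{2K(t-s)}g(s)$ coming from $|\mathrm{Ric}|\le K$, and is absorbed into the constant $C$ inside $\Gamma$ and $\Lambda$.
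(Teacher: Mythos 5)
Your overall strategy is exactly the paper's: take the global gradient estimate, integrate it along a space--time path joining the two points, and absorb the $\nabla f$ cross term by a two-stage Young inequality calibrated to give $|\gamma'(s)|^4/[2(t_2-t_1)^2]$ and $\alpha^2/32$. The constants, the origin of $\mu_1,\mu_2$ and of the two radicals in $\Lambda$, and the case split all match the paper's argument.

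However, as written your key absorption step does not go through, because of the direction of your time parametrization. You take $\eta(s)=(\gamma(s),\,t_1+s(t_2-t_1))$, so $\partial_t f$ enters the integrand with the \emph{positive} coefficient $t_2-t_1$, and the gradient estimate supplies only a \emph{lower} bound $\partial_t f\ge \tfrac1\alpha|\nabla f|^2+\cdots$. Substituting a lower bound into a positively weighted slot cannot produce the upper bound on $f(x_2,t_2)-f(x_1,t_1)$ you claim; moreover the resulting quadratic form $\gamma'\cdot\nabla f+\tfrac{t_2-t_1}{\alpha}|\nabla f|^2$ is bounded \emph{below} by $-\alpha|\gamma'|^2/[4(t_2-t_1)]$ but is unbounded above, so Young's inequality "cancels" the $|\nabla f|^2$ term only in the lower-bound direction. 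The paper avoids this by running the path backwards in time (from $t_2$ at $s=0$ to $t_1$ at $s=1$), so that $\partial_t f$ carries the coefficient $-(t_2-t_1)$ and the lower bound on $\partial_t f$ converts into an upper bound on $dl/ds$, with $-\tfrac{t_2-t_1}{\alpha}|\nabla f|^2$ then absorbing $\gamma'\cdot\nabla f$ from above. If you fix the signs (either by reversing your parametrization or by reading off the lower bound correctly), your computation delivers $u(x_1,t_1)\le u(x_2,t_2)\exp(\Gamma)$, the standard Li--Yau direction --- which is in fact what the paper's own proof establishes, even though the theorem's displayed statement has the two sides interchanged. So the substantive content of your argument is recoverable and coincides with the paper's, but the proposal as stated proves neither the displayed inequality nor its (correct) reverse without this sign repair. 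Your extra remark on controlling $|\gamma'(s)|$ under the evolving metric via $|\mathrm{Ric}|\le K$ is a legitimate point that the paper silently skips.
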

\begin{proof}
Firstly, the estimate in Corollary $2.2$ can be written as
\begin{align}\label{4.1}
\nonumber
&\frac{|\nabla u(x,t)|^{2}}{u^{2}(x,t)}-\alpha(t)\frac{u_{t}(x,t)}{u(x,t)}+\alpha(t)h(x,t)u^{l-1}(x,t)\\
&\leq
\left\{\aligned
\alpha\varphi+C\alpha^{2}(K+\mu_{1}),\quad\quad\quad\quad\quad\quad l\leq 1,\\
\alpha\varphi+C\alpha^{2}(K+\mu_{2})
+\delta\sqrt{\frac{l\alpha-1}{l-1}}\sqrt{\overline{u}_{2}\delta_{2}}\\
\quad\quad+\alpha^{\frac{3}{2}}\sqrt{n(l-1)\varphi\delta_{1}},\quad l>1,
\endaligned\right.
\end{align}
where $\mu_{1}=\sqrt{\overline{u}_{1}\delta_{3}}+\overline{u}_{1}\delta_{1}+\sqrt{(2-l)\overline{u}_{1}\delta_{2}}$ and
$\mu_{2}=(l-1)\overline{u}_{2}\delta_{1}+\sqrt{\overline{u}_{2}\delta_{3}}$.

Now we only prove the conclusion for $l\leq 1$.\\
Define $l(s)=\log(\gamma(s), (1-s)t_{2}+st_{1})$. Obviously, we infer that $l(0)=\log u(y, t_2)$
and $l(1)=\log u(x, t_1)$. Direct calculation shows
\begin{eqnarray*}
\frac{\partial l(s)}{\partial s}&=&(t_{2}-t_{1})\left(\frac{\nabla u}{u}\frac{\gamma'(s)}{t_{2}-t_{1}}
-\frac{u_t}{u}\right)\\
&\leq &(t_{2}-t_{1})\left[\frac{\nabla u}{u}\frac{\gamma'(s)}{t_{2}-t_{1}}-\frac{1}{\alpha(t)}\frac{|\nabla u|^{2}}{u^{2}}
-h(x,t)u^{l-1}+\varphi+C\alpha(K+\mu_{1})\right]\\
&\leq &\frac{\alpha(t)}{4}\frac{|\gamma'(s)|^2}{t_{2}-t_{1}}+(t_{2}-t_{1})[\varphi+C\alpha(K+\mu)+\delta_{1}\overline{u}_{1}].
\end{eqnarray*}
Integrating above inequality over $\gamma(s)$, we obtain
\begin{eqnarray*}
\log\frac{u(x,t_1)}{u(y,t_2)}&=&\int_{0}^{1}\frac{\partial l(s)}{\partial s}ds\\
&\leq &\int_{0}^{1}\left[\frac{\alpha(t)}{4}\frac{|\gamma'(s)|^2}{t_{2}-t_{1}}
+(t_{2}-t_{1})[\varphi+C\alpha(K+\mu_{1})+\delta_{1}\overline{u}_{1}]\right]ds\\
&\leq &\int_{0}^{1}\frac{|\gamma'(s)|^4}{2(t_{2}-t_{1})^2}ds+\int_{t_{1}}^{t_{2}}\frac{\alpha^{2}(t)}{32}dt
\\
&&
+\int_{t_{1}}^{t_{2}}[\varphi+C\alpha(K+\mu)+\delta_{1}\overline{u}_{1}]dt.
\end{eqnarray*}
The proof is complete.
\end{proof}

We also derive an Harnack inequality for the equation \eqref{1.6}. The proof is similar to Theorem $4.1$, so we omit it.
\begin{thm}
Let $(M^{n}, g(x,t))_{t\in [0,T]}$ be a complete solution to the Ricci flow \eqref{1.7}. Suppose that
$|\mathrm{Ric}|\leq K$ for some $K>0$, and all $(x,t)\in M^{n}\times[0,T]$. Assume that
$u(x,t)$ is a positive solution for ~\eqref{1.6}.
Then for all $(x_{1},t_{1})\in M^{n}\times(0,T)$ and $(x_{2},t_{2})\in M^{n}\times(0,T)$
such that $t_{1}<t_{2}$, we have
\begin{align*}
u(x_{2},t_{2})&\leq u(x_{1},t_{1})\\
&\times\exp\left(\int_{0}^{1}\frac{|\gamma'(s)|^4}{2(t_{2}-t_{1})^2}ds+\int_{t_{1}}^{t_{2}}\frac{\alpha^{2}(t)}{32}dt
+\int_{t_{1}}^{t_{2}}[\varphi+C\alpha(K+\mu)+\delta_{1}\overline{u}_{1}]dt\right)
\end{align*}
\end{thm}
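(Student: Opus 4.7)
My plan is to follow the line-integration scheme of the proof of Theorem $4.1$ verbatim, replacing the gradient estimate of Corollary $2.2$ by its counterpart for equation \eqref{1.6} obtained from Corollary $3.3$. Fix a piecewise-smooth curve $\gamma\colon[0,1]\to M^n$ with $\gamma(0)=x_2$ and $\gamma(1)=x_1$ (for instance, a minimizing geodesic in the metric $g(t_1)$), set $t(s)=(1-s)t_2+st_1$, and define
$$\ell(s)=\log u\bigl(\gamma(s),t(s)\bigr),$$
so that $\ell(0)=\log u(x_2,t_2)$, $\ell(1)=\log u(x_1,t_1)$, and the desired inequality is equivalent to a suitable upper bound on $\ell(0)-\ell(1)=-\int_0^1\ell'(s)\,ds$.

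A direct chain-rule computation gives
$$\ell'(s)=\frac{\nabla u\cdot\gamma'(s)}{u}-(t_2-t_1)\frac{u_t}{u}.$$
Rearranging the gradient estimate of Corollary $3.3$ at $(\gamma(s),t(s))$ into the form
$$-\frac{u_t}{u}\le-\frac{1}{\alpha}\frac{|\nabla u|^2}{u^2}+\varphi+C\alpha(K+|a|),$$
and absorbing the $-a\log u$ contribution into the $C\alpha(K+\mu)+\delta_1\overline{u}_1$ bookkeeping exactly as $\pm hu^{l-1}$ was absorbed in Theorem $4.1$, then using Cauchy--Schwarz on the inner product together with the elementary inequality $XY-\tfrac{1}{\alpha}X^2\le\tfrac{\alpha}{4}Y^2$ (with $X=|\nabla u|/u$ and $Y=|\gamma'(s)|/(t_2-t_1)$), yields the pointwise bound
$$\ell'(s)\le\frac{\alpha(t(s))|\gamma'(s)|^2}{4(t_2-t_1)}+(t_2-t_1)\bigl[\varphi+C\alpha(K+\mu)+\delta_1\overline{u}_1\bigr].$$

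Integrating $s$ over $[0,1]$ and changing variables via $t=t(s)$ converts the second summand into $\int_{t_1}^{t_2}[\varphi+C\alpha(K+\mu)+\delta_1\overline{u}_1]\,dt$, while one more Young's inequality on the first summand (with weights calibrated to generate $\alpha^2(t)/32$ and $|\gamma'(s)|^4/(2(t_2-t_1)^2)$) produces the remaining two integrals in the stated bound after the analogous change of variables. The only step that is genuinely different from Theorem $4.1$ is the initial rearrangement of the gradient estimate: the term $-a\log u$ in equation \eqref{1.6} is not sign-definite, unlike the favorable-sign term $-hu^{l-1}$ used in Theorem $4.1$, so its absorption into the constants $\mu$ and $\delta_1\overline{u}_1$ requires a uniform pointwise control of $|\log u|$ along the curve; this is the main technical obstacle. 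Once that is handled, every remaining step is a literal transcription of the Theorem $4.1$ argument, which is why the authors content themselves with the remark ``so we omit it''.
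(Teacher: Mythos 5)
Your overall strategy is exactly what the paper intends: the paper gives no proof of this theorem at all, stating only that it ``is similar to Theorem 4.1, so we omit it'', and your plan of running the Theorem 4.1 line-integration argument with the global estimate for equation \eqref{1.6} (Corollary 3.3) in place of Corollary 2.2 is that intended argument. The chain-rule computation for $\ell'(s)$, the elementary inequality $XY-\tfrac{1}{\alpha}X^2\le\tfrac{\alpha}{4}Y^2$, and the final Young's inequality splitting $\alpha|\gamma'(s)|^2/(4(t_2-t_1))$ into the $\alpha^2(t)/32$ and $|\gamma'(s)|^4/(2(t_2-t_1)^2)$ integrals all reproduce the paper's Theorem 4.1 computation line by line.

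The gap is the one you yourself flag and then set aside. After rearranging Corollary 3.3 one gets $-u_t/u\le-\tfrac{1}{\alpha}|\nabla u|^2/u^2-a\log u+\varphi+C\alpha(K+|a|)$, and the term $-a\log u$ has no sign and no a priori bound: unlike $-h\,u^{l-1}$ in Theorem 4.1, which is nonpositive because $h\ge 0$ and $u>0$ and is therefore harmless, absorbing $-a\log u$ into a constant requires an upper bound on $u$ when $a<0$ and a positive lower bound on $u$ when $a>0$, neither of which is among the hypotheses. ``Once that is handled'' is doing all the work: without such a bound the integrand in the exponent cannot be made independent of $u$, and the stated conclusion --- whose constants $\mu$, $\delta_1$, $\overline{u}_1$ are in any case only defined for equation \eqref{1.5}, not \eqref{1.6}, so the theorem as printed is itself a transcription of the Theorem 4.1 bound --- does not follow from the computation as given. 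So your proposal is an accurate reconstruction of the paper's intended (and equally incomplete) argument, but it is not yet a proof; to close it you must either add a two-sided bound on $u$ as a hypothesis, or carry the $\int_{t_1}^{t_2}(-a\log u)\,dt$ term explicitly in the exponent of the Harnack inequality rather than absorbing it into constants.
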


\section{\textbf{Application to heat equation}}
\

According to Theorem $2.1$ and Theorem $3.1$, we derive corresponding gradient
estimates and Harnack inequalities to the heat equation under Ricci flow
\begin{thm}
Let $(M^{n}, g(t))_{t\in [0,T]}$ be a complete solution to the Ricci flow ~\eqref{1.7}.
  Assume that $|\mathrm{Ric}(x,t)|\leq K$ for some $K>0$ and all $t\in [0,T]$.
  Suppose that there exist three functions $\alpha(t)$, $\varphi(t)$ and $\gamma(t)$
  satisfy the following conditions (C1), (C2), (C3) and (C4).

Given $x_{0}\in M$ and $R>0$, let $u(x,t)$ be a positive solution of the heat equation
\begin{equation}\label{5.1}
u_{t}=\Delta u,
\end{equation}
in the cube $B_{2R,T}:=\{(x,t)|d(x,x_{0},t)\leq 2R, 0\leq t\leq T\}$, where $c$ is a constant.

If $\frac{\gamma\alpha^4}{\alpha-1}\leq C_{1}$ for some constant $C_1$,
then
\begin{eqnarray*}
\frac{|\nabla u|^2}{u^2}-\alpha\frac{u_{t}}{u}
\leq
C\alpha^{2}\left(\frac{1}{R^2}+\frac{\sqrt{K}}{R}+K\right)+\frac{Cn^2}{R^{2}\gamma}+\alpha\varphi.
\end{eqnarray*}

If $\frac{\gamma}{\alpha-1}\leq C_{2}$ for some constant $C_2$,
then
\begin{eqnarray*}
\frac{|\nabla u|^2}{u^2}-\alpha\frac{u_{t}}{u}
\leq
C\alpha^{2}\left(\frac{1}{R^2}+\frac{\sqrt{K}}{R}+K\right)+\frac{Cn^{2}\alpha^4}{R^{2}\gamma}+\alpha\varphi.
\end{eqnarray*}
where $C$ is a constant.
\end{thm}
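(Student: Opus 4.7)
The plan is to obtain Theorem $5.1$ as an immediate specialization of Theorem $2.1$. Since the heat equation $u_{t}=\Delta u$ coincides with equation~\eqref{1.5} when $h(x,t)\equiv 0$, I would simply apply Theorem $2.1$ with $h\equiv 0$, taking $\delta_{1}=\delta_{2}=\delta_{3}=0$ (the hypotheses $|\nabla h|^{2}\leq \delta_{2}h$ and $\Delta h\geq -\delta_{3}$ are trivially satisfied). All the terms of the form $\alpha\sqrt{n\overline{u}_{1}\delta_{3}}$, $n\alpha^{2}\overline{u}_{1}\delta_{1}$, $\alpha^{3/2}\sqrt{n\overline{u}_{1}\delta_{2}}$ etc.~in the conclusions of Theorem $2.1$ then vanish, and the remaining $n^{3/2}\alpha^{2}K$ can be absorbed into $C\alpha^{2}K$ with $C$ depending only on $n$. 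This yields exactly the two displayed bounds.

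If instead one wishes to give a self-contained argument that avoids rerunning the whole machinery, I would mimic Lemma $2.1$ verbatim with $h\equiv 0$. Set $f=\log u$ and $F=|\nabla f|^{2}-\alpha f_{t}-\alpha\varphi$. A short Bochner-plus-Young computation, together with the first two relations in condition (C2), gives
\begin{equation*}
(\Delta-\partial_{t})F\geq \Bigl|f_{ij}+\tfrac{\varphi}{n}g_{ij}\Bigr|^{2}+\Bigl(\tfrac{2\varphi}{n}-\alpha'\Bigr)\tfrac{1}{\alpha}F-\alpha^{2}n^{2}K^{2}-2\nabla f\cdot\nabla F,
\end{equation*}
because every term in the inequality~\eqref{2.4} involving $h$, $\nabla h$ or $\Delta h$ disappears. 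This is the clean heat-equation analogue of Lemma $2.1$.

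Next I would localize exactly as in the proof of Theorem $2.1$: set $G=\gamma F$, pick the same radial cutoff $\phi=\varphi(\rho/R)$ (with its standard Hessian and Laplacian comparisons), and apply the maximum principle to $\phi G$ on the parabolic cube $B_{2R,T_{1}}$. At the maximum point, I use the trace Cauchy--Schwarz bound
\begin{equation*}
\Bigl|f_{ij}+\tfrac{\varphi}{n}g_{ij}\Bigr|^{2}\geq \tfrac{1}{\alpha^{2}n}\Bigl[\tfrac{G}{\gamma}+(\alpha-1)|\nabla f|^{2}\Bigr]^{2},
\end{equation*}
the cutoff estimate $|\nabla\phi|^{2}/\phi\leq C/R^{2}$, $-\Delta\phi\leq C(1+\sqrt{K}R)/R^{2}$, and Hamilton's bound $\phi_{t}G\leq \sqrt{C}\,K\,G$ coming from the Ricci-flow geodesic variation formula. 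The cross term $\frac{2\phi^{2}(\alpha-1)}{n\alpha^{2}}G|\nabla f|^{2}+2\phi G\,\nabla\phi\cdot\nabla f$ is absorbed via $Ax^{2}+Bx\geq -B^{2}/(4A)$, and the quartic $|\nabla f|^{4}$ term is simply dropped.

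This reduces the inequality to the familiar form $A\,x^{2}-B\,x\leq C$ with $A=1/(\alpha^{2}n\gamma)$, giving $x=\phi G\leq B/A+\sqrt{C/A}$. Invoking conditions (C3) and (C4) to eliminate $\gamma'/\gamma-(2\varphi/n-\alpha')/\alpha$ and to pass from $(x_{1},t_{1})$ to $T_{1}$, and using either $\gamma\alpha^{4}/(\alpha-1)\leq C_{1}$ or $\gamma/(\alpha-1)\leq C_{2}$ to control the term $\frac{n\alpha^{2}}{2(\alpha-1)}\cdot\frac{C}{R^{2}}$, produces the two claimed bounds after setting $\phi\equiv 1$ on $B_{R,T}$. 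There is no genuine obstacle: the only bookkeeping subtlety is choosing the correct branch ($C_{1}$ vs $C_{2}$) in the last step, which determines whether the localization contributes $Cn^{2}/(R^{2}\gamma)$ or $Cn^{2}\alpha^{4}/(R^{2}\gamma)$.
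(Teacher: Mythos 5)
Your proposal is correct and matches the paper's own treatment: Section 5 derives Theorem 5.1 precisely by specializing Theorem 2.1 (equation \eqref{1.5} with $h\equiv 0$, so that all terms involving $\delta_{1},\delta_{2},\delta_{3},\overline{u}_{1},\overline{u}_{2}$ drop out and $n^{3/2}\alpha^{2}K$ is absorbed into $C\alpha^{2}K$), which is your first paragraph. Your alternative self-contained argument is just the $h\equiv 0$ instance of the paper's Lemma 2.1 and Theorem 2.1 machinery, so it introduces nothing genuinely different.
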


\begin{cor}  Let $(M^{n}, g(t))_{t\in [0,T]}$ be a complete solution to the Ricci flow ~\eqref{1.7}.
  Assume that $|\mathrm{Ric}(x,t)|\leq K$ for some $K>0$ and all $t\in [0,T]$.
Given $x_{0}\in M$ and $R>0$, let $u(x,t)$ be a positive solution of the heat equation  ~\eqref{5.1}
in the cube $B_{2R,T}:=\{(x,t)|d(x,x_{0},t)\leq 2R, 0\leq t\leq T$\}. Then the following special estimates are valid.

1. Li-Yau type:
$$\alpha(t)=constant,\quad \varphi(t)=\frac{n}{t}+\frac{nK\alpha^2}{\alpha-1},\gamma(t)=t^{\theta}
\quad  with \quad 0<\theta\leq 2.$$
\begin{eqnarray*}
\nonumber
\frac{|\nabla u|^2}{u^2}-\alpha\frac{u_{t}}{u}
&\leq&
C\alpha^{2}\left[\frac{1}{R^2}(1+\sqrt{K}R)+\frac{\alpha^2}{\alpha-1}\frac{1}{R^2}+K\right]\\
&&+\alpha\varphi
+n^{\frac{3}{2}}\alpha^{2}K.
\end{eqnarray*}

2. Hamilton type:
$$\alpha(t)=e^{2Kt},\quad \varphi(t)=\frac{n}{t}e^{4Kt},\quad \gamma(t)=te^{2Kt}.$$
\begin{eqnarray*}
\nonumber
\frac{|\nabla u|^2}{u^2}-\alpha\frac{u_{t}}{u}
&\leq&
C\alpha^{2}\left[\frac{1}{R^2}(1+\sqrt{K}R)+K\right]+\frac{C\alpha^4}{R^{2}te^{2Kt}}\\
&&+\alpha\varphi
+n^{\frac{3}{2}}\alpha^{2}K.
\end{eqnarray*}

3. Li-Xu type:
\begin{align*}
&\alpha(t)=1+\frac{\sinh(Kt)\cosh(Kt)-Kt}{\sinh^{2}(Kt)},\quad \varphi(t)=2nK[1+\coth(Kt)],\\
&\gamma(t)=\tanh(Kt).
\end{align*}
\begin{eqnarray*}
\nonumber
\frac{|\nabla u|^2}{u^2}-\alpha\frac{u_{t}}{u}
&\leq&
C\alpha^{2}\left[\frac{1}{R^2}(1+\sqrt{K}R)+K\right]+\frac{C}{R^{2}\tanh(Kt)}\\
&&+\alpha\varphi
+n^{\frac{3}{2}}\alpha^{2}K.
\end{eqnarray*}

4. Linear Li-Xu type:
$$\alpha(t)=1+2Kt,\quad \varphi(t)=\frac{n}{t}+nK(1+2Kt+\mu Kt), \gamma(t)=Kt \quad with \quad \mu\geq \frac{1}{4}.$$
\begin{eqnarray*}
\nonumber
\frac{|\nabla u|^2}{u^2}-\alpha\frac{u_{t}}{u}
&\leq&
C\alpha^{2}\left[\frac{1}{R^2}(1+\sqrt{K}R)+K\right]+\frac{C\alpha^4}{R^{2}Kt}\\
&&+\alpha\varphi
+n^{\frac{3}{2}}\alpha^{2}K.
\end{eqnarray*}
\end{cor}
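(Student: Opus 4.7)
The plan is to obtain each of the four asserted estimates as a direct specialization of Theorem 5.1. For each listed triple $(\alpha(t),\varphi(t),\gamma(t))$ I verify conditions (C1)-(C4), determine which of the two side constraints $\gamma\alpha^{4}/(\alpha-1)\le C_{1}$ or $\gamma/(\alpha-1)\le C_{2}$ holds on $[0,T]$, and read off the conclusion by substituting the explicit expressions. No further geometric argument is required beyond Theorem 5.1; the work is algebraic verification together with specialization of the generic bound.

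For the Li-Yau type, $\alpha'=0$, so the first line of (C2), after clearing $\alpha$ and inserting $\varphi=\alpha n/t+nK\alpha^{2}/(\alpha-1)$, collapses to $2(\alpha-1)/t\ge 0$; the second is immediate, and the third follows from $\varphi\ge n\alpha/t$ combined with $\alpha\varphi'=-n\alpha^{2}/t^{2}$. Condition (C3) becomes $\theta/t\le 2/t+2K\alpha/(\alpha-1)$, valid for $0<\theta\le 2$, and with $\alpha$ constant both side constraints hold on $[0,T]$. For the Hamilton type $\alpha=e^{2Kt}$ gives $\alpha'=2K\alpha$; the three inequalities of (C2) reduce to comparisons of exponentials and follow from $e^{2Kt}\ge 1$ (the third requiring only $e^{2Kt}+4Kt-1\ge 0$, clear from the Taylor expansion at $t=0$). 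Condition (C3) amounts to $1/t+2K\le 2e^{2Kt}/t-2K$, which rearranges to $2e^{2Kt}-1\ge 1+4Kt$, again from Taylor expansion. The quantity $\gamma/(\alpha-1)=te^{2Kt}/(e^{2Kt}-1)$ extends continuously to $1/(2K)$ at $t=0$ and is therefore bounded on $[0,T]$.

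The Li-Xu type uses the identity $\varphi=2nK(1+\coth Kt)$ and the explicit derivative of $\alpha(t)=1+\coth Kt-Kt/\sinh^{2}Kt$; (C2) and (C3) reduce to standard hyperbolic-function manipulations, and $\alpha$ is uniformly bounded (with $\alpha\to 2$ as $t\to\infty$), fitting the alternative in (C4). For the linear Li-Xu type $\alpha=1+2Kt$ and $\alpha'=2K$; after multiplying through by $(1+2Kt)$, the first line of (C2) reduces to $\mu K^{2}t^{2}-Kt+1\ge 0$, whose discriminant $K^{2}(1-4\mu)$ is non-positive precisely when $\mu\ge 1/4$---this is exactly where the stated hypothesis enters. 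The third line and (C3) both become quadratic inequalities in $t$ that are automatic from the same $\mu$-constraint, and $\gamma/(\alpha-1)=1/2$ on the nose.

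The main obstacle is the third inequality of (C2), $\varphi^{2}/n+\alpha\varphi'\ge 0$, which must be verified case by case and hinges on cancelling the singular $-n/t^{2}$ contribution of $\varphi'$ against the dominant $n/t^{2}$ part of $\varphi^{2}/n$; this is routine but requires care, and for the linear Li-Xu case the surviving coefficient of $t$ is controlled by $\mu\ge 1/4$. Once (C1)-(C4) are verified and the appropriate side condition identified, substituting the explicit $(\alpha,\varphi,\gamma)$ into Theorem 5.1 and collecting constants yields the four displayed estimates, with the generic term $Cn^{2}\alpha^{4}/(R^{2}\gamma)$ specializing to $C\alpha^{4}/(R^{2}t^{\theta})$, $C\alpha^{4}/(R^{2}te^{2Kt})$, $C/(R^{2}\tanh(Kt))$, and $C\alpha^{4}/(R^{2}Kt)$ respectively, and in the Li-Yau case combining with the $C\alpha^{2}/R^{2}$ contribution to produce the displayed $\alpha^{2}/(\alpha-1)\cdot 1/R^{2}$ presentation.
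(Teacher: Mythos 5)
Your proposal is correct and follows essentially the same route as the paper: the corollary is read off by verifying conditions (C1)--(C4) together with the side constraints $\gamma\alpha^{4}/(\alpha-1)\le C$ or $\gamma/(\alpha-1)\le C$ for each of the four triples and then specializing Theorem 5.1, which is exactly what the paper does (the verifications are carried out in its appendix, Section 6). Your case-by-case checks match the paper's computations, including the cancellation of the singular $-n/t^{2}$ term in $\varphi^{2}/n+\alpha\varphi'$, the role of the discriminant condition $\mu\ge \tfrac14$, and the limit $\gamma/(\alpha-1)\to 1/(2K)$ in the Hamilton case; the only blemish is a harmless transcription slip there, since the inequality actually needed for (C3) is $2e^{2Kt}\ge 1+4Kt$.
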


Let $R\rightarrow\infty$, a global estimate is derived.
\begin{cor}
Let $(M^{n}, g(t))_{t\in [0,T]}$ be a complete solution to the Ricci flow ~\eqref{1.7}.
  Assume that $|\mathrm{Ric}(x,t)|\leq K$ for some $K>0$ and all $t\in [0,T]$.
  Suppose that there exist three functions $\alpha(t)$, $\varphi(t)$ and $\gamma(t)$
  satisfy the following conditions (C1), (C2), (C3) and (C4).

Given $x_{0}\in M$ and $R>0$, let $u(x,t)$ be a positive solution of the heat equation
$(5.2)$
in the cube $M^{n}\times [0,T]$.
Then
\begin{eqnarray*}
\frac{|\nabla u|^2}{u^2}-\alpha\frac{u_{t}}{u}
\leq
C\alpha^{2}K+\alpha\varphi,
\end{eqnarray*}
where $C$ is a constant.
\end{cor}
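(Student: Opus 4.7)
The plan is to derive Corollary 5.3 directly from the local estimate in Theorem 5.1 by letting $R \to \infty$. Since Theorem 5.1 has already done the hard analytic work (maximum principle applied to $H = \phi G$, cutoff function estimates via the Laplacian comparison theorem, handling of the evolving metric, etc.), the global estimate requires only taking a limit.

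Concretely, I would fix an arbitrary point $(x,t) \in M^n \times (0,T]$ and, for each $R > 0$, apply Theorem 5.1 on the parabolic cube $B_{2R,T}$. Under either hypothesis $\tfrac{\gamma \alpha^4}{\alpha-1}\leq C_1$ or $\tfrac{\gamma}{\alpha-1}\leq C_2$, the local estimate gives
\begin{equation*}
\frac{|\nabla u|^2}{u^2}-\alpha\frac{u_t}{u}\leq C\alpha^2\left(\frac{1}{R^2}+\frac{\sqrt{K}}{R}+K\right)+\frac{C n^2 \alpha^{p}}{R^2 \gamma}+\alpha\varphi,
\end{equation*}
where $p\in\{0,4\}$ according to which hypothesis is used. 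The key observation is that all $R$-dependent terms on the right-hand side tend to zero as $R\to\infty$: the terms $\tfrac{1}{R^2}$ and $\tfrac{\sqrt{K}}{R}$ decay by inspection, while $\tfrac{C n^2 \alpha^p}{R^2 \gamma}$ decays because $\alpha(t)$, $\gamma(t)$ and $\varphi(t)$ are evaluated at the fixed time $t$ and do not depend on $R$. Since $(x,t)$ was chosen beforehand and lies inside $B_{R,T}$ for all sufficiently large $R$, passing to the limit $R\to\infty$ yields
\begin{equation*}
\frac{|\nabla u|^2(x,t)}{u^2(x,t)}-\alpha(t)\frac{u_t(x,t)}{u(x,t)}\leq C\alpha^2(t) K + \alpha(t)\varphi(t),
\end{equation*}
which is the desired conclusion.

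This is essentially a routine globalization of a local result, so there is no real obstacle. The only mild subtlety is ensuring that the hypotheses (C1)--(C4) on $\alpha,\varphi,\gamma$, together with the global curvature bound $|\mathrm{Ric}|\leq K$ on $M^n\times[0,T]$, are enough to guarantee that Theorem 5.1 applies on every cube $B_{2R,T}$ with constants independent of $R$; but this is built into the form of the local estimate, since the right-hand side of Theorem 5.1 separates cleanly into an $R$-dependent part and an $R$-independent part $C\alpha^2 K + \alpha\varphi$. Hence the argument reduces to verifying decay of the $R$-dependent terms and taking the limit.
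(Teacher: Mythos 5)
Your proposal is correct and is exactly the paper's own argument: the paper derives this corollary by letting $R\to\infty$ in the local estimate of Theorem 5.1, observing that the terms $\frac{C\alpha^{2}}{R^{2}}$, $\frac{C\alpha^{2}\sqrt{K}}{R}$ and $\frac{Cn^{2}\alpha^{p}}{R^{2}\gamma}$ all vanish in the limit at any fixed $(x,t)$. Your added remark that $\alpha$, $\gamma$, $\varphi$ are evaluated at the fixed time $t$ and are independent of $R$ is the only point needing care, and you address it correctly.
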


Using theorem $4.1$, we derive a Harnack inequality.

\begin{cor}
(Harnack Inequality)\quad
Let $(M^{n}, g(t))_{t\in [0,T]}$ be a complete solution to the Ricci flow \eqref{1.7}. Suppose that
$|\mathrm{Ric}|\leq K$ for some $K>0$, and all $(x,t)\in M^{n}\times[0,T]$. Assume that
$u(x,t)$ is a positive solution for ~\eqref{5.1}.
Then for all $(x_{1},t_{1})\in M^{n}\times(0,T)$ and $(x_{2},t_{2})\in M^{n}\times(0,T)$
such that $t_{1}<t_{2}$, we have
\begin{align*}
u(x_{2},t_{2})&\leq u(x_{1},t_{1})\\
&\times\exp\left(\int_{0}^{1}\frac{|\gamma'(s)|^4}{2(t_{2}-t_{1})^2}ds+\int_{t_{1}}^{t_{2}}\frac{\alpha^{2}(t)}{32}dt
+\int_{t_{1}}^{t_{2}}[\varphi+C\alpha K]dt\right)
\end{align*}
\end{cor}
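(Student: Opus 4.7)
The plan is to specialize the argument of Theorem $4.1$ to the case $h \equiv 0$, using the global gradient estimate for the heat equation provided by Corollary $5.3$. Since the heat equation \eqref{5.1} corresponds to taking $h \equiv 0$ in \eqref{1.5}, we have $\delta_1 = \delta_2 = \delta_3 = 0$, $\overline{u}_1 = \overline{u}_2 = 0$, and the quantities $\mu_1, \mu_2$ appearing in the proof of Theorem $4.1$ both vanish. The global gradient estimate then reduces to
\begin{equation*}
\frac{|\nabla u|^2}{u^2} - \alpha\frac{u_t}{u} \leq C\alpha^2 K + \alpha\varphi,
\end{equation*}
which is the essential input.

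The main step is as follows. Fix $(x_1,t_1)$ and $(x_2,t_2)$ with $t_1<t_2$, and pick a smooth path $\gamma:[0,1] \to M^n$ with $\gamma(0) = x_2$ and $\gamma(1) = x_1$. Define $l(s) := \log u(\gamma(s),(1-s)t_2 + st_1)$, so that $l(0) = \log u(x_2,t_2)$ and $l(1) = \log u(x_1,t_1)$. A direct computation yields
\begin{equation*}
\frac{\partial l(s)}{\partial s} = \left\langle \frac{\nabla u}{u}, \gamma'(s)\right\rangle - (t_2-t_1)\frac{u_t}{u}.
\end{equation*}
Next, I would use the global gradient estimate to substitute $-\alpha\frac{u_t}{u} \leq -\frac{|\nabla u|^2}{u^2} + C\alpha^2 K + \alpha\varphi$, divide by $\alpha$, and apply Cauchy-Schwarz together with the elementary inequality $ab \leq \tfrac{t_2-t_1}{\alpha}a^2 + \tfrac{\alpha}{4(t_2-t_1)}b^2$ to the inner product term with $a = |\nabla u|/u$ and $b = |\gamma'(s)|$. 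This absorbs the gradient term and produces
\begin{equation*}
\frac{\partial l(s)}{\partial s} \leq \frac{\alpha(t)}{4}\frac{|\gamma'(s)|^2}{t_2-t_1} + (t_2-t_1)\big[\varphi + C\alpha K\big].
\end{equation*}

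Finally, integrating this differential inequality from $s=0$ to $s=1$ gives $\log\frac{u(x_1,t_1)}{u(x_2,t_2)}$ on the left-hand side. On the right-hand side I would split the first summand via the Young-type estimate $\frac{\alpha}{4}\frac{|\gamma'(s)|^2}{t_2-t_1} \leq \frac{|\gamma'(s)|^4}{2(t_2-t_1)^2} + \frac{\alpha^2}{32}$, and convert the remaining $s$-integrals into $t$-integrals via the substitution $t = (1-s)t_2 + st_1$, yielding the three displayed terms in the statement. Exponentiating concludes the proof.

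The only mildly delicate point is the Young-type split that produces the $|\gamma'(s)|^4/(2(t_2-t_1)^2)$ term and the $\alpha^2/32$ term simultaneously; everything else is a verbatim specialization of Theorem $4.1$. Since Theorem $4.1$ has already been established, this Harnack inequality follows without new ideas, and I would omit routine details just as the authors do for Theorem $4.2$.
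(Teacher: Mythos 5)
Your proposal is correct and follows essentially the same route as the paper, which derives this corollary by specializing the proof of Theorem 4.1 (the path integration of $l(s)=\log u(\gamma(s),(1-s)t_2+st_1)$ combined with the global gradient estimate and the same Young-type splits) to the case $h\equiv 0$, with Corollary 5.3 supplying the input estimate. The paper gives no separate proof beyond the remark ``Using theorem 4.1,'' so your verbatim specialization is exactly what is intended.
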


\section{\textbf{Appendix}}
We will check some special functions $\alpha(t)>1$, $\varphi(t)>0$ and $\gamma(t)>0$   satisfy the following two systems
\begin{equation}\label{6.1}
\left\{\aligned
\frac{2\varphi}{n}-2\alpha K\geq (\frac{2\varphi}{n}-\alpha')\frac{1}{\alpha},\\
\frac{2\varphi}{n}-\alpha'>0,\\
\frac{\varphi^2}{n}+\alpha\varphi'\geq 0.
\endaligned\right.
\end{equation}
and
\begin{equation}\label{6.2}
\left\{\aligned
\frac{\gamma'}{\gamma}-(\frac{2\varphi}{n}-\alpha')\frac{1}{\alpha}\leq 0,\\
\frac{\gamma\alpha^4}{\alpha-1}\leq C, ~or~ \frac{\gamma}{\alpha-1}\leq C.
\endaligned\right.
\end{equation}
Besides, $\alpha(t)$ and $\gamma(t)$ are non-decreasing.

$(1)$ Let $\alpha(t)=1+2Kt$,  $\varphi(t)=\frac{n}{t}+nK(1+2Kt+\mu Kt)$ $(\mu\geq \frac{1}{4})$ and $\gamma(t)=Kt$.

One can has
\begin{align*}
(\mathrm{i})\quad &\frac{2\varphi}{n}-\alpha'=\frac{2}{t}+2K(1+2Kt+\mu Kt)-2K>0,\\
(\mathrm{ii})\quad &\frac{\varphi^2}{n}+\alpha\varphi'
=\frac{n}{t^2}+nK^{2}(1+2Kt+\mu Kt)^{2}+\frac{2nK}{t}(1+2Kt+\mu Kt)\\
&+(1+2Kt)(-\frac{n}{t^2}+2nK^{2}+n\mu K^2)\\
&=nK^{2}(1+2Kt+\mu Kt)^{2}+\frac{2nK}{t}(2Kt+\mu Kt)\\
&+(1+2Kt)(2nK^{2}+n\mu K^2)>0,\\
(\mathrm{iii})\quad &\frac{2\varphi}{n}-2\alpha K-(\frac{2\varphi}{n}-\alpha')\frac{1}{\alpha}\\
&=\frac{2}{t}+2K(1+2Kt+\mu Kt)-2K(1+2Kt)\\
&-\Big[\frac{2}{t}+2K(1+2Kt+\mu Kt)-2K\Big]\cdot\frac{1}{1+2Kt}\\
&=\frac{4Kt(\mu K^{2}t^{2}-Kt+1)}{t(1+2Kt)}\geq 0,\quad for \quad \mu\geq \frac{1}{4}.
\end{align*}
Hence, $\alpha(t)=1+2Kt$,  $\varphi(t)=\frac{n}{t}+nK(1+2Kt+\mu Kt)$ $(0<\mu\leq \frac{1}{4})$ satisfy  system (6.1).

On the other hand, one has
\begin{eqnarray*}
&&\frac{\gamma'}{\gamma}-(\frac{2\varphi}{n}-\alpha')\frac{1}{\alpha}\\
&=&\frac{1}{t}-\left(\frac{2}{t}+2K(1+2Kt+\mu Kt)-2K\right)\frac{1}{1+2Kt}\\
&=&\frac{1}{t(1+2Kt)}\left[-(4K^{2}+2K\mu)t^{2}+2Kt-1\right]\\
&=&\frac{1}{t(1+2Kt)}\left[-(3K^{2}+2K\mu)t^{2}-(Kt-1)^{2}\right]\\
&\leq & 0,\quad for \quad t\geq 0.
\end{eqnarray*}
and $\frac{\gamma}{\alpha-1}=\frac{1}{2}$. So, (6.2) is also satisfied.

$(2)$ $\alpha(t)=e^{2Kt}$,  $\varphi(t)=\frac{n}{t}e^{4Kt}$ and $\gamma(t)=te^{2Kt}$, where $(0<Kt\leq 1)$ .
Direct calculation gives
\begin{align*}
(\mathrm{i})\quad &\frac{2\varphi}{n}-\alpha'=\frac{2}{t}e^{2Kt}(e^{2Kt}-Kt)>0,\\
(\mathrm{ii})\quad &\frac{\varphi^2}{n}+\alpha\varphi'=\frac{n}{t^2}e^{6Kt}(e^{2Kt}-1+4Kt)>0,\\
(\mathrm{iii})\quad &\frac{2\varphi}{n}-2\alpha K-(\frac{2\varphi}{n}-\alpha')\frac{1}{\alpha}=\frac{2}{t}e^{4Kt}-2Ke^{2Kt}-\frac{2}{t}e^{2Kt}+2K\\
&=(e^{2Kt}-1)(\frac{2}{t}e^{2Kt}-2K)\geq 0.
\end{align*}
Hence, $\alpha(t)=e^{2Kt}$ and  $\varphi(t)=\frac{n}{t}e^{4Kt}$ satisfy  system (6.1).

Besides, we have
\begin{eqnarray*}
&&\frac{\gamma'}{\gamma}-(\frac{2\varphi}{n}-\alpha')\frac{1}{\alpha}\\
&=&\frac{1+2Kt}{t}-\left(\frac{2}{t}e^{2Kt}-2K\right)\\
&=&\frac{1}{t}(1+4Kt-2e^{2Kt})\\
&\leq & 0,\quad for \quad t\geq 0.
\end{eqnarray*}
and as $t\rightarrow 0^{+}$, $\frac{\gamma}{\alpha-1}=\frac{te^{2Kt}}{e^{2Kt}-1}\rightarrow\frac{1}{2K}$. This
implies $\frac{\gamma}{\alpha-1}\leq C$. So, (6.2) is also satisfied.

$(3)$ $\alpha(t)=1+\frac{\sinh(Kt)\cosh(Kt)-Kt}{\sinh^{2}(Kt)}$,  $\varphi(t)=2nK[1+\coth(Kt)]$ and $\gamma(t)=\tanh(Kt)$.
Direct calculation gives
\begin{align*}
(\mathrm{i})\quad &\frac{2\varphi}{n}-\alpha'=4K[1+\coth(Kt)]-2K+2K\coth^{2}(Kt)-\frac{2K^{2}t}{\sinh^{2}(Kt)}\coth(Kt)\\
&\quad\quad \quad\quad=2K+2K(1+\alpha)\coth(Kt)>0,\\
(\mathrm{ii})\quad&\alpha(\frac{2\varphi}{n}-2\alpha K)-(\frac{2\varphi}{n}-\alpha')\\
&=4K\alpha[1+\coth(Kt)]-2K\alpha^{2}-[2K+2K(1+\alpha)\coth(Kt)]\\
&=2K\alpha\Big[1+\coth(Kt)+\frac{Kt}{\sinh^{2}(Kt)}\Big]-[2K+2K(1+\alpha)\coth(Kt)]\\
&=2K(\alpha-1)\frac{Kt}{\sinh^{2}(Kt)}>0,\\
(\mathrm{iii})\quad &\frac{\varphi^2}{n}+\alpha\varphi'
=\frac{2nK^{2}}{\sinh^{2}(Kt)}\Big[2(1+\coth(Kt))^{2}\sinh^{2}(Kt)-\alpha\Big]\\
&=\frac{2nK^{2}}{\sinh^{2}(Kt)}\Big[2e^{2Kt}-1-\frac{e^{4Kt}-1-4Kte^{2Kt}}{(e^{2Kt}-1)^2}\Big]\\
&=\frac{4nK^{2}e^{2Kt}}{(e^{2Kt}-1)^2\sinh^{2}(Kt)}\Big[e^{4Kt}-3e^{2Kt}+2+4Kt\Big].
\end{align*}
Let $f(x)=e^{4x}-3e^{2x}+2+4x$ with $x\leq 0$. Obviously, $f(0)=0$ and
\begin{eqnarray*}
f'(x)&=&4e^{4x}-6e^{2x}+4>0.
\end{eqnarray*}
Then we get $f(x)>0$ for $x>0$. Hence, we have
\begin{eqnarray*}
&&(\frac{2\varphi}{n}-\alpha')\varphi+\alpha\varphi'+\alpha'\varphi-\frac{\varphi^2}{n}\\
&=&\frac{4nK^{2}e^{2Kt}}{(e^{2Kt}-1)^2\sinh^{2}(Kt)}\Big[e^{4Kt}-3e^{2Kt}+2+4Kt\Big]>0.
\end{eqnarray*}
Hence, $\alpha(t)=1+\frac{\sinh(Kt)\cosh(Kt)-Kt}{\sinh^{2}(Kt)}$ and $\varphi(t)=2nK[1+\coth(Kt)]$ satisfy  system (6.1).

On the other hand, as $t\rightarrow 0$, we have $\frac{\gamma\alpha^4}{\alpha-1}\rightarrow 2$;
 $\frac{\gamma\alpha^4}{\alpha-1}\rightarrow 1$ for $t\rightarrow \infty$. These imply $\frac{\gamma\alpha^4}{\alpha-1}\leq C$, here $C$ is a universal constant.\\
 Besides, we have
\begin{eqnarray*}
&&\frac{\gamma'}{\gamma}-(\frac{2\varphi}{n}-\alpha')\frac{1}{\alpha}\\
&=&\frac{1}{\alpha}\left[\frac{K\alpha}{\sinh(Kt)\cosh(Kt)}-2K-2K(1+\alpha)\coth(Kt)\right]\\
&=&\frac{1}{\alpha}\left[\frac{K}{\sinh(Kt)\cosh(Kt)}[\alpha-2(1+\alpha)\cosh^{2}(Kt)]-2K\right]\\
&=&\frac{1}{\alpha}\left[\frac{K}{\sinh(Kt)}[\alpha(1-2\cosh(Kt))-2\cosh(Kt)]-2K\right]\\
&\leq & 0,\quad for \quad t\geq 0.
\end{eqnarray*}
So, (6.2) is also satisfied.

$(4)$ $\alpha(t)=constant$,  $\varphi(t)=\frac{\alpha n}{t}+\frac{nK\alpha^2}{\alpha-1}$ and $\gamma(t)=t^{\theta}$ with $0<\theta\leq 2$.
Direct calculation gives
\begin{align*}
(\mathrm{i})\quad &\frac{2\varphi}{n}-\alpha'=\frac{2}{n}\Big[\frac{\alpha n}{t}+\frac{nK\alpha^2}{\alpha-1}\Big]>0,\\
(\mathrm{ii})\quad &\frac{\varphi^2}{n}+\alpha\varphi'=\frac{n\alpha^2}{t^2}+\frac{n^{2}K^{2}\alpha^4}{n(\alpha-1)^2}+\frac{2nK\alpha^{2}}{(\alpha-1)t}-\frac{n\alpha^{2}}{t^2}>0,\\
(\mathrm{iii})\quad &(\frac{2\varphi}{n}-2\alpha K)-(\frac{2\varphi}{n}-\alpha')\frac{1}{\alpha}\\
&=\frac{2\varphi}{n\alpha}(\alpha-1)-2K\alpha\\
&\geq\frac{2}{n\alpha}(\alpha-1)\frac{nK\alpha^2}{\alpha-1}-2K\alpha=0.
\end{align*}
Hence, $\alpha(t)=constant$, and $\varphi(t)=\frac{\alpha n}{t}+\frac{nK\alpha^2}{\alpha-1}$ satisfy  system (6.1).

 On the other hand, we have
\begin{eqnarray*}
\frac{\gamma'}{\gamma}-(\frac{2\varphi}{n}-\alpha')\frac{1}{\alpha}
&=&\frac{\theta}{t}-\frac{2}{t}-\frac{2K\alpha}{\alpha-1}\\
&\leq & 0,\quad for \quad t\geq 0 \quad and \quad 0<\theta\leq 2.
\end{eqnarray*}
So, (6.2) is also satisfied.

\section{\textbf{Acknowledgement}}
We are grateful to Professor Jiayu Li for his encouragement. We also thank Professor Qi S Zhang for introduction of this
problem in the summer course.

\end{document}